\theoremstyle{plain}
 \newtheorem{theorem}{Theorem}[subsection]
 \newtheorem{lemma}[theorem]{Lemma}
 \newtheorem{proposition}[theorem]{Proposition}
 \newtheorem{definition}[theorem]{Definition}
 \newtheorem{notation}[theorem]{Notation}
\declaretheorem[name=Proposition,sibling=theorem,qed={\qedsymbol}]{propqed}
\theoremstyle{remark}
 \declaretheorem[name=Remark,sibling=theorem,qed={\lower-0.3ex\hbox{$\diamond$}}]{remark}
 \declaretheorem[name=Note,sibling=theorem,qed={\lower-0.3ex\hbox{$\diamond$}}]{note}
\newcommand{\tZ}{\scalebox{1.15}{$\mathtt{Z}$}}
\newcommand{\tU}{\scalebox{1.15}{$\mathtt{U}$}}
\newcommand{\tI}{\scalebox{1.15}{$\mathtt{I}$}}
\newcommand{\tM}{\scalebox{1.15}{$\mathtt{M}$}}
\newcommand{\tP}{\scalebox{1.15}{$\mathtt{P}$}}
\newcommand{\tFL}{\scalebox{1.15}{$\mathtt{FL}$}}
\newcommand{\bp}[1]{\cite[#1]{boxerpilloni20}}
\newcommand{\ds}{\diamondsuit}
\newenvironment{smatrix}{\left( \begin{smallmatrix} } {\end{smallmatrix} \right) }
\newcommand{\stbt}[4]{\begin{smatrix}#1 & #2 \\ #3 & #4\end{smatrix}}
\newcommand{\upi}{\underline{\pi}}
\newcommand{\usi}{\underline{\sigma}}
\newcommand{\uet}{\underline{\eta}}
\newcommand{\Dcris}{\mathbf{D}_{\mathrm{cris}}}
\DeclareMathOperator{\GSp}{GSp}
\DeclareMathOperator{\Kl}{Kl}
\DeclareMathOperator{\Iw}{Iw}
\DeclareMathOperator{\fs}{fs}
\DeclareMathOperator{\Hom}{Hom}
\DeclareMathOperator{\Spec}{Spec}
\DeclareMathOperator{\Fil}{Fil}
\DeclareMathOperator{\GL}{GL}
\DeclareMathOperator{\Gr}{Gr}
\DeclareMathOperator{\pr}{pr}
\DeclareMathOperator{\Gal}{Gal}
\DeclareMathOperator{\Ind}{Ind}\DeclareMathOperator{\ES}{ES}
\DeclareMathOperator{\cusp}{cusp}
\DeclareMathOperator{\Sieg}{Si}
\DeclareMathOperator{\FL}{FL}
\DeclareMathOperator{\an}{an}
\DeclareMathOperator{\HT}{HT}
\DeclareMathOperator{\Spa}{Spa}
\DeclareMathOperator{\diag}{diag}
\DeclareMathOperator{\cores}{cores}
\DeclareMathOperator{\sss}{sss}
\DeclareMathOperator{\myss}{ss}\renewcommand{\ss}{\myss}
\DeclareMathOperator{\Tor}{Tor}
\newcommand{\tor}{\mathrm{tor}}
\newcommand{\sph}{\mathrm{sph}}
\newcommand{\new}{\mathrm{new}}
\newcommand{\res}{\mathrm{res}}
\newcommand{\cB}{\mathcal{B}}
\newcommand{\cD}{\mathcal{D}}
\newcommand{\cE}{\mathcal{E}}
\newcommand{\cF}{\mathcal{F}}
\newcommand{\cG}{\mathcal{G}}
\newcommand{\cH}{\mathcal{H}}
\newcommand{\cI}{\mathcal{I}}
\newcommand{\cL}{\mathcal{L}}
\newcommand{\cM}{\mathcal{M}}
\newcommand{\cN}{\mathcal{N}}
\newcommand{\cO}{\mathcal{O}}
\newcommand{\cP}{\mathcal{P}}
\newcommand{\cQ}{\mathcal{Q}}
\newcommand{\cS}{\mathcal{S}}
\newcommand{\cT}{\mathcal{T}}
\newcommand{\cU}{\mathcal{U}}
\newcommand{\cV}{\mathcal{V}}
\newcommand{\cW}{\mathcal{W}}
\newcommand{\cX}{\mathcal{X}}
\newcommand{\cZ}{\mathcal{Z}}
\newcommand{\fa}{\mathfrak{a}}
\newcommand{\fb}{\mathfrak{b}}
\newcommand{\fS}{\mathfrak{S}}
\newcommand{\br}{\mathbf{r}}
\newcommand{\bt}{\mathbf{t}}
\newcommand{\CC}{\mathbf{C}}
\newcommand{\QQ}{\mathbf{Q}}
\newcommand{\RR}{\mathbf{R}}
\newcommand{\ZZ}{\mathbf{Z}}
\renewcommand{\AA}{\mathbf{A}}
\newcommand{\Fp}{\mathbf{F}_p}
\newcommand{\Qp}{\QQ_p}
\newcommand{\Zp}{\ZZ_p}
\newcommand{\Af}{\AA_{\mathrm{f}}}
\newcommand{\pif}{\pi_{\mathrm{f}}}
\newcommand{\dR}{\mathrm{dR}}
\newcommand{\id}{\mathrm{id}}
\renewcommand{\sp}{\operatorname{sp}}
\newcommand{\into}{\hookrightarrow}
\newcommand{\onto}{\twoheadrightarrow}
\newcommand{\htimes}{\mathop{\hat\otimes}}
\renewcommand{\le}{\leqslant}
\renewcommand{\ge}{\geqslant}
\renewcommand{\geq}{\geqslant}
\numberwithin{equation}{section}
\author{David Loeffler}
\address{David Loeffler, Mathematics Institute\\
 University of Warwick\\
 Coventry CV4 7AL, UK.}
\email{d.a.loeffler@warwick.ac.uk}
\urladdr{\href{http://orcid.org/0000-0001-9069-1877}{0000-0001-9069-1877}}
\author{Sarah Livia Zerbes}
\address{Sarah Livia Zerbes, Department of Mathematics\\
 University College London\\
London WC1E 6BT, UK.}
\email{s.zerbes@ucl.ac.uk}
\urladdr{\href{http://orcid.org/0000-0001-8650-9622}{0000-0001-8650-9622}}
\thanks{Supported by the following grants: EPSRC Standard Grant EP/S020977/1 and ERC Consolidator Grant \#101001051 ``ShimBSD'' (Loeffler).}
\title{On the Bloch--Kato conjecture for $\GSp(4)\times\GL(2)$}
\date{\today}
\begin{document}

 \begin{abstract}
  We prove the Bloch--Kato conjecture for certain critical values of degree 8 $L$-functions associated to cusp forms on $\GSp_4 \times \GL_2$. We also construct a $p$-adic Eichler--Shimura isomorphism in Hida families for $\GSp_4$, relating $H^2$ of automorphic vector bundles with $\Dcris$ of a subquotient of \'etale cohomology.
 \end{abstract}

 \maketitle

 \setcounter{tocdepth}{1}
 \tableofcontents


\section{Introduction}

 The Bloch--Kato conjecture, which relates the dimension of the Selmer group of a $p$-adic geometric Galois representation to the order of vanishing of its $L$-function, is one of the most important open problems in number theory. In a recent paper \cite{LZ20}, we proved this conjecture for the 4-dimensional Galois representations arising from automorphic representations of $\GSp_4$, under various technical hypotheses, using the ``method of Euler systems''; this relied crucially on the construction of an Euler system for $\GSp_4$ in our earlier work \cite{LSZ17} with Skinner, and the construction of a $p$-adic spin $L$-function for $\GSp_4$ in the paper \cite{LPSZ1} with Pilloni and Skinner.

 In this paper, we prove new cases of the Bloch--Kato conjecture, for the 8-dimensional tensor product Galois representation $V(\pi) \otimes V(\sigma)$ associated to a $\GSp_4$ automorphic representation $\pi$ and a $\GL_2$ automorphic representation $\sigma$. The principal ingredients are the the Euler system for these Galois representations constructed in \cite{HJS20}, and the formula proved in \cite{LZ20b-regulator} relating these Euler system classes to periods of $p$-adic modular forms for $\GSp_4 \times \GL_2$ (obtained by integrating a class in $H^2$ arising from $\pi$, restricted to $\GL_2 \times_{\GL_1} \GL_2 \subset \GSp_4$, against the product of a cusp form in $\sigma$ and a non-classical $p$-adic Eisenstein series). The main result of this paper, \cref{thm:BKconj}, proves the Bloch--Kato conjecture for a certain twist of $V(\pi)^* \otimes V(\sigma)^*$, corresponding to a critical value of the $L$-function $L(\pi \times \sigma, s)$. 

 The main new technical input needed in order to prove this theorem is to interpolate the $p$-adic automorphic periods arising from \cite{LZ20b-regulator} in $p$-adic families, with both of the weights $(r_1, r_2)$ of the $\GSp_4$ automorphic representation allowed to vary. This is not accessible by the methods of our earlier work \cite{LPSZ1}, since the version of higher Hida theory used in that paper (based on the earlier work \cite{pilloni20}) is only applicable to 1-parameter families in which $r_1$ varies for a fixed $r_2$. A similar issue arises in our earlier work \cite{LZ20}, but in that setting, we were able to bypass the problem by applying the functorial lift from $\GSp_4$ to $\GL_4$, and applying the results of \cite{DJR18, barreradimitrovwilliams} on $p$-adic $L$-functions for $\GL_{2n}$. However, this does not work for $\GSp_4 \times \GL_2$, since there appears to be no known construction of $p$-adic $L$-functions for $\GL_4 \times \GL_2$.

 We therefore develop a direct approach to interpolating these $p$-adic periods in 2-parameter families for $\GSp_4$, with both $r_1$ and $r_2$ varying, using the new ``higher Coleman theory'' introduced in \cite{boxerpilloni20}. Our main result in this direction is \cref{thm:bspairing}, whose proof occupies the majority of the present paper. This result shows that there is a well-defined pairing between the higher Coleman theory spaces for $\GSp_4$ and spaces of overconvergent modular forms for $\GL_2 \times \GL_2$; and in the final sections of the paper, we use this to define $p$-adic $L$-functions in families for $\GSp_4 \times \GL_2$ by pairing a family of $H^2$ eigenclasses for $\GSp_4$ with the product of a $\GL_2$ cusp-form family and an auxiliary Eisenstein series. The existence of this $p$-adic $L$-function then allows us to prove a reciprocity law relating the Euler system of \cite{HJS20} to critical complex $L$-values, and thus to prove the Bloch--Kato conjecture.

 These new methods can also be used to strengthen the results of \cite{LZ20} for the degree 4 motive of $\GSp_4$; for reasons of space, we shall pursue this in a forthcoming paper. Our methods also give, as a by-product, the construction of a ``$p$-adic Eichler--Shimura isomorphism in families'' for $\GSp_4$, interpolating the comparison isomorphisms between de Rham and \'etale cohomology for all (or almost all) specialisations of a $\GSp_4$ Hida family. Our results give an interpolation of the comparison isomorphism after projecting to a specific filtration step of de Rham cohomology, corresponding to $H^2$ of automorphic vector bundles. Our results are thus complementary to the recent work of Diao et al \cite{diao-rosso-wu21} which interpolates the filtration step corresponding to $H^0$.

\begin{remark}
 The switch from ``Hida'' to ``Coleman'' theory allows us to define $p$-adic $L$-functions for finite-slope families, rather than just for ordinary (i.e.~slope 0) families. However, this comes at a price: the use of Coleman theory requires an overconvergence condition on the Eisenstein series, which does not hold for the 2-parameter family of Eisenstein series used in \cite{LPSZ1}. So the price we pay for including the second weight variable $r_2$ is that we lose sight of the cyclotomic variable -- for each automorphic representation $\pi \times \sigma$ of $\GSp_4 \times \GL_2$, there is an interval of integers $n$ such that $V(\pi)^* \otimes V(\sigma)^*(-n)$ is critical, but in the present paper we can only prove the Bloch--Kato conjecture for a specific $n$, corresponding to the lower endpoint of this interval. Even in the ordinary case, to prove the Bloch--Kato conjecture for all of the critical twists, we would need a version of higher Hida (rather than Coleman) theory for $\GSp_4$ with both $r_1$ and $r_2$ varying. Such a theory is not available at present, although analogous results for Hilbert modular groups have been announced by Giada Grossi \cite{grossi21}.
\end{remark}

\emph{Acknowledgements.} We would like to thank George Boxer and Vincent Pilloni for answering our questions about their beautiful theory. We are very grateful for their patience.


\section{Preliminaries}

 Throughout this paper $p$ is a prime.

 \subsection{The group $G$}

  Let $G=\GSp(4)$, with respect to the anti-diagonal Hermitian form with matrix $J = \begin{smatrix} & &&1\\&&1\\&-1\\-1\end{smatrix}$. Write $B_G$ for the Borel subgroup consisting of upper-triangular matrices, and write $P_{\Kl}$ and $P_{\Sieg}$ for the Klingen and Siegel parabolic subgroups containing $B_G$. We then have the Levi decompositions
  \[ B_G = T N_{B}, \qquad P_{\Sieg}= M_{\Sieg} N_{\Sieg},\qquad P_{\Kl}= M_{\Kl} N_{\Kl},\]
  where $T$ is the diagonal torus.

  The Siegel parabolic $P_{\Sieg}$ and its Levi $M_{\Sieg}$ plays a distinguished role in our constructions, since it is conjugate to the centraliser of the cocharacter defining the Shimura datum; the Klingen parabolic is less important here (in contrast with our previous paper \cite{LPSZ1}). Hence we shall often write simply $P_G$, $M_G$ for $P_{\Sieg}, M_{\Sieg}$. We identify $M_{G}$ with $\GL_2 \times \GL_1$ via $\stbt{A}{0}{0}{\star} \mapsto (A, \nu)$, where $\nu$ is the symplectic multiplier.

  Let $W_G=N_G(T)/T$ denote the Weyl group of $(G,T)$. The group $W_G$ is generated by the $T$-cosets of the elements
  $s_1=\begin{smatrix} 1\\ & & 1 \\ & -1 \\ &&&1 \end{smatrix}$
  and
  $s_2=\begin{smatrix}  & 1\\ -1 \\ &&& -1 \\ &&1\end{smatrix}$.

  Let $W_{M_G} = \langle s_2 \rangle $ denote the Weyl group of $(M_G, T)$, and let ${}^{M} W_G=W_{M_G}\backslash W_G$. This has a distinguished set of coset representatives (the \emph{Kostant representatives}) given by
  \[ {}^MW_G=\{ \id, w_1, w_2, w_3 \}\]
  where $w_1 = s_1$, $w_2 = s_1 s_2$, $w_3 = s_1 s_2 s_1$. These have lengths $\ell(w_i) = i$. We use $w_G^{\max}$ for the long Weyl element of $G$, and $w_{M_G}^{\max} = s_2$ the long Weyl element of $M_G$.

  \begin{remark}
   Note that:
   \begin{enumerate}[(i)]
    \item Since $W_G$ permutes the coordinates of the diagonal torus, we can realize it as subgroup of $S_4$. Then ${}^{M}W_G$ identifies with the permutations $w \in W_G$ such that $w(1)<w(2)$.

    \item The map $w \mapsto w_{M}^{\max} \cdot w \cdot w_{G}^{\max}$ preserves ${}^M W_G$, and interchanges $w_i$ with $w_{3-i}$.

    \item The group $\GSp_4$ has the unusual and convenient property that all the elements of ${}^M W_G$ have distinct lengths, so the numbering $w_i$ makes sense.

    \item There is a slight conflict here with the notations of \cite{boxerpilloni20} where $w_0$ is the \emph{long} Weyl element, hence the use of $w_G^{\max}$ here.\qedhere
   \end{enumerate}
  \end{remark}


 \subsection{The group $H$}

  Let $H=\GL_2\times_{\GL_1}\GL_2$, and write $B_H$ for the Borel subgroup of upper-triangular elements of $H$; it has a Levi decomposition $B_H=T_H N_H$. We observe that the Levi subgroup $M_H\subset B_H$ identifies with $T_H$. Hence $W_{M_H}=\{1\}$, so ${}^{M}W_H=W_H \cong W_{\GL_2}\times W_{\GL_2}$.
  We consider $H$ as a subgroup of $G$ via
  \[ \iota: \left[\begin{pmatrix} a & b\\ c & d\end{pmatrix}, \begin{pmatrix} a' & b'\\ c' & d'\end{pmatrix}\right]\mapsto \begin{smatrix} a &&& b\\ & a' & b' & \\ & c' & d' & \\ c &&& d\end{smatrix}.\]
  For future use, we write $H_i$ for the $i$th $\GL_2$-factor of $H$, and we write $T_i$ for the torus of $H_i$ and $\Delta$ for the joint subgroup identified with the character $\nu$, so $T=T_1\times_{\Delta}T_2$. For $i=1,2$, write $\varpi_i:T\rightarrow T_1$ for the natural projection map.


 \subsection{Algebraic weights and roots}

  As in \cite[\S 5.1.1]{pilloni20} and \cite[\S 2.3.1]{LPSZ1}, we identify characters of $T$ with triples of integers $(r_1, r_2; c)$, with $r_1 + r_2 = c \bmod 2$, corresponding to the character
  \[ \diag(st_1, st_2, s t_2^{-1}, s t_1^{-1}) \mapsto t_1^{r_1} t_2^{r_2} s^{c}.\]

  \begin{remark}
   Note that the earlier paper \cite{LSZ17} uses a slightly different notation, but we shall use the above here.
  \end{remark}

  In this notation the simple positive roots with respect to $B_G$ are $(1,-1; 0)$ and $(0, 2; 0)$; and the half-sum is $\rho_G = (2, 1; 0) \in \tfrac{1}{2} X^\bullet(T)$. A weight $(r_1, r_2; c)$ is dominant for $H$ if $r_1, r_2 \ge 0$, dominant for $M_G$ if $r_1 \ge r_2$, and dominant for $G$ if both of these conditions hold.

  The Weyl group acts (on the left) on the group of characters $X^*(T)$ via\footnote{There is also a ``twisted'' action, but we shall write this out explicitly when it arises, rather than defining general notations for it.}
  \[ (w\cdot \lambda)(t)=\lambda(w^{-1}tw). \]
  Explicitly, the generators of the Weyl group of $G$ act by $s_1 \cdot (r_1, r_2; c) = (r_1, -r_2; c)$ and $s_2 \cdot (r_1, r_2; c) = (r_2, r_1; c)$. The long Weyl element acts by $w_G^{\max} \cdot (r_1, r_2; c) = (-r_1, -r_2; c)$; thus $-w_G^{\max} \lambda$ and $\lambda$ coincide on the derived subgroup $\operatorname{Sp}_4$.

  There is an element $\rho_{G, nc}$, which is half the sum of the ``non-compact'' roots, i.e.~those appearing in the unipotent radical $N_{P_{\Sieg}}$. This is given by $\rho_{G, nc} = (\tfrac{3}{2}, \tfrac{3}{2}; 0) = \tfrac{1}{2}(w_M^{\max} \rho_G + \rho_G)$. Note that $\rho_{G,nc}+ \rho_{M_G} = \rho_G$, where $\rho_{M_G} = \left(\tfrac{1}{2}, \tfrac{-1}{2}; 0\right)$ is the half-sum for $M_G$.

  For $H$, all roots are non-compact (the maximal compact in $H(\RR)$ is abelian), so $\rho_{M_H} = 0$ and $\rho_{H, nc} = \rho_H = (1, 1; 0)$.


 \subsection{Flag varieties and Shimura varieties}

  Write $\FL_G=P_G \backslash G$ for the Siegel flag variety of $G$.
  For a neat open compact subgroup $K\subset G(\Af)$, write $S_{G,K}$ for the canonical model over $\Spec(\QQ)$ of the Shimura variety $G(\QQ)\backslash \cH_2\times G(\Af)/K$; here $\cH_2$ is the Siegel upper half space.
  Depending on the choice of a projective cone decomposition $\Sigma$, let $S_{G,K}^{\tor}\rightarrow \Spec(\QQ)$ be the toroidal compactification of $S_{G,K}$ corresponding to $\Sigma$. Denote by $A$ the universal semi-abelian scheme over $S_{G,K}^{\tor}$.

  We similarly define the flag variety $\FL_H=B_H\backslash H$, the canonical rational model of the Shimura variety $S_{H,L}$ for $L=L^pL_p\subseteq H(\Af)$ an open compact subgroup, and $S^{\tor}_{H,L}$ for its toroidal compactification. The universal semi-abelian scheme over $S^{\tor}_{H,L}$ is given by $\cE_1\boxtimes \cE_2$, where $\cE_i$ is the generalized elliptic curve over the modular curve corresponding to the $i$th $\GL_2$-factor of $H$.

  \begin{notation}
   We shall abuse notation slightly by writing $S_{G,K}^{\tor}$ where $K$ is a subgroup of $G(\Qp)$. By this, we mean $S_{G, K^p K}^{\tor}$ where $K^p$ is some fixed choice of open compact away from $p$. When discussing Shimura varieties for $G$ and $H$ together we shall suppose these tame levels $K^{G, p}$ and $K^{H, p}$ are chosen such that $K^{G, p} \cap H = K^{H, p}$.
  \end{notation}

 \subsection{Coefficient sheaves}

  \subsubsection{Reminders on vector bundles, Weyl chambers, coherent cohomology}

   We briefly recall the conventions for automorphic vector bundles from \bp{\S 4.1.1} and attempt to reconcile them with our earlier work elsewhere.
   \newcommand{\Rep}{\operatorname{Rep}}

   We have a functor $\Rep(M_G) \to VB(S^{G,\tor}_K)$ defined using the torsor $M^G_{\dR}$, and we let $\cV_{\kappa}$, for $\kappa \in X^\bullet(T)$ that is $M_G$-dominant, be the image of the representation of highest weight $\kappa$ (with respect to $B_G \cap M_G$). With these conventions:

   \begin{itemize}
    \item The weight $-2\rho_{nc} = (-3, -3; 0)$ maps to $\Omega^3_{S^{G,\tor}_K}(\log D)$ (see \bp{\S 4.2.3}.

    \item The vector bundles having cohomology only in degree 0 are the $\cV_{\kappa}$ with $\kappa = (k_1, k_2; m)$, $-3 \ge k_1 \ge k_2$ (sic).

    \item If $\kappa = (-, -; m)$ for $m \in \ZZ$, and $\pi$ contributes to the cohomology of $\cV_{\kappa}$, then the $\infty$-type of $\pi$ is $\|\cdot\|^{-m}$ on the centre, so $\diag(\varpi_\ell, \dots, \varpi_\ell) \in Z_G(\QQ_\ell)$ acts on $R\Gamma(S^{G,\tor}_K, \cV_{\kappa})$ as $\ell^m$ times a finite-order character.
   \end{itemize}

   We now recall how these are related to automorphic representations.

   \begin{itemize}

    \item For each $M_G$-dominant $\kappa$, there exists a unique $\nu \in X^\bullet(T)$ such that $\nu + \rho$ is dominant and
    \begin{align*}
     \kappa = -w_{M_G}^{\max} w (\nu + \rho) - \rho &\text{ for some $w \in {}^M W_G$},\\
     \text{or equivalently }\kappa = -w w_G^{\max} (\nu + \rho) - \rho &\text{ for some $w \in {}^M W_G$}.
    \end{align*}
    Then the automorphic representations contributing to the cohomology of $\cV_{\kappa}$ have infinitesimal character $\nu + \rho$.

    \item The character $\nu + \rho$ is dominant, but possibly not integral; and $\nu$ is integral, but possibly not dominant. The cases where $\nu$ is actually dominant integral correspond to cohomological $\infty$-types: the infinitesimal character $\nu + \rho$ is the same as that of the algebraic representation $W_\nu$.

    \item Given $\kappa$, the element $\nu$ is uniquely determined but the element $w$ is not. We let
    \begin{align*}
     C(\kappa)^+ &\coloneqq \{ w \in {}^M W_G: \kappa = -w_{M_G}^{\max} w (\nu + \rho) - \rho\},\\
     C(\kappa)^- &\coloneqq \{ w \in {}^M W_G: \kappa = -w w_G^{\max}(\nu + \rho) - \rho\}.
    \end{align*}
    The involution $w \mapsto w_{M}^{\max} w w_{G}^{\max}$ interchanges $C(\kappa)^+$ and $C(\kappa)^-$.

    \item The definition of $C(\kappa)^{\pm}$ can be rewritten in terms of the dual weights $\nu^\vee = -w_G^{\max} \nu$ and $\kappa^\vee = -w_M^{\max} \kappa$; then it takes the form
    \begin{align*}
     C(\kappa)^+ &\coloneqq \{ w \in {}^M W_G: \kappa^\vee = w(\nu + \rho) - \rho \text{ with $\nu + \rho$ dominant}\},\\
     C(\kappa)^- &\coloneqq \{ w \in {}^M W_G: \kappa = w(\nu^\vee + \rho) - \rho \text{ with $\nu^\vee + \rho$ dominant}\}.
    \end{align*}
    This shows, in particular, that $C(\kappa^\vee)^\pm = C(\kappa)^{\mp}$ (corresponding to Serre duality).

    \item If $\pi$ has infinitesimal character $\nu+\rho$, then $\pi$ contributes to cohomology of $\cV_{\kappa}$ in degree $i$ if $C(\kappa)^+$ contains an element of length $i$, or equivalently $C(\kappa)^-$ contains an element of length $3-i$.

   \end{itemize}

   \begin{remark}
    We have overloaded our notations a little for ``dual weights'', since the notation $\sigma^\vee$ for a weight $\sigma$ will mean $-w_{?}^{\max} \sigma$ where $?$ variously denotes $G$, $M_G$, or sometimes $H$ or $M_H$. However, it will (we hope) always be clear from the context which is intended.
   \end{remark}

  \subsubsection{The BGG complex}

   Given a weight $\nu = (r_1, r_2; c)$ as above, with $r_1 \ge r_2 \ge -1$, we shall define
   \[ \kappa_i(\nu) = w_i(\nu + \rho) - \rho, \qquad 0 \le i \le 3. \]
   These are the weights $\kappa$ such that representations of infinitesimal character $\nu^\vee + \rho$ contribute to $R\Gamma(S^{G,\tor}_K, \cV_{\kappa})$. They are explicitly given by
   \begin{equation}
    \label{eq:ourweights}
    \begin{aligned}
     \kappa_0(\nu) &= (r_1, r_2; c) &  \kappa_1(\nu) &= (r_1, -r_2 - 2; c)\\
     \kappa_2(\nu) &= (r_2 - 1, -r_1 -3; c) & \kappa_3(\nu) &= (-r_2-3, -r_1-3; c).
    \end{aligned}
   \end{equation}
   Often we shall take $c = (r_1 + r_2)$; the sheaves $\cV_{\kappa_i(\nu)}$, where $\nu$ has the form $(r_1, r_2; r_1+r_2)$, will be the main sheaves of interest in this work. Note that $\cV_{\kappa_i(\nu)}$ has cuspidal cohomology in degree $3 - i$, and the centre acts on this cohomology via $\diag(p,p,p,p) \mapsto p^{r_1 + r_2}$ up to a finite-order character.

   If $r_1 \ge r_2 \ge 0$, so $\nu$ is dominant, then the sheaves $\cV_{\kappa_i(\nu)}$ are distinct, and they are the terms in the BGG complex quasi-isomorphic to $\cW_{\nu} \otimes \Omega^\bullet(\log D)$.

   \begin{note}
    If $\nu$ is dominant, then $C(\kappa_i(\nu))^- = \{ w_i\}$, and dually $C(\kappa_i(\nu))^+ = \{w_{3-i}\}$. If $\nu = 0$, then $\cV_{\kappa_i(\nu)} = \Omega^i(\log D)$, so the BGG complex is simply the de Rham complex.
   \end{note}

  \subsubsection{Comparison with other works}

   \begin{itemize}
   \item Comparison to \cite{LPSZ1}: the (Hecke-equivariant) vector bundle we attached to $(k_1, k_2; m)$ in \cite{LPSZ1} is now associated to $(-k_2, -k_1; m)$. That is, the conventions differ by $w_{0, M}\cdot  w_{0, G}$, and correcting for this, the collection of sheaves \eqref{eq:ourweights} agrees with \S 5.2 of \emph{op.cit.}.

   \item Comparison to \cite{boxerpilloni20}:
    Our notations are consistent with the general theory described in \cite{boxerpilloni20} but \textbf{not} with the specific choices of conventions made in the application to symplectic groups in \S 5.14 of \emph{op.cit.}. More precisely, we take the upper-triangular Borel subgroup throughout; whereas in \emph{op.cit.} the Borel subgroup consists of matrices of the form
    \( \begin{smatrix}
     \star & \star &  &  \\
      & \star &  &  \\
     \star & \star & \star & \star\\
     \star & \star &  & \star
    \end{smatrix}\),
    so $B_G \cap M_G$ is the same in both conventions, but the roles of the parabolics $P_\mu$ and its opposite $P_{\mu}^{\operatorname{std}}$ are swapped.

    Another discrepancy is that in \emph{op.cit.} the weights considered all have the form $(k_1, k_2; -k_1-k_2)$. This is (to some extent) cancelled out by the fact that in \S 5.14 of \emph{op.cit.} the Hecke operators considered are $\diag(1, 1, p^{-1},p^{-1})$ etc.
   \end{itemize}

   \begin{remark}
    One can choose to have weights with non-negative parameters (up to a constant shift) indexing the weights of automorphic vector bundles with nontrivial $H^0$, or indexing the dominant weights for $G$; but one can't have both. We have chosen the latter, while \bp{\S 5.14} chooses the former.
   \end{remark}

 \subsection{Hecke operators}
  \label{sect:heckeops}

  Let $\Iw_G(p)$ denote the Iwahori subgroup $\{ g \in G(\Zp): g \pmod{p} \in B_G\}$. We shall consider the following operators in the Hecke algebra of level $\Iw_G(p)$, acting on the cohomology of any of the sheaves \eqref{eq:ourweights}:
  \begin{align*}
   \cU_{\Sieg} &= [\diag(p, p, 1, 1)], & \cU'_{\Sieg} &= [\diag(1, 1, p, p)]\\
   \cU_{\Kl} &= p^{-r_2} \cdot [\diag(p^2, p, p, 1)], & \cU'_{\Kl} &= p^{-r_2} \cdot [\diag(1, p, p, p^2)]\\
   \cU_B &= \cU_{\Sieg} \cdot \cU_{\Kl}, & \cU'_B &= \cU'_{\Sieg} \cdot \cU'_{\Kl},\\
   \langle p \rangle &= p^{-(r_1 + r_2)} \left[\diag(p, \dots, p)\right].
  \end{align*}
  The factor of $p^{-r_2}$ included with $\cU_{\Kl}$ and $\cU'_{\Kl}$, and the factor of $p^0$ for the Siegel operators, implies that all four operators are ``optimally integrally normalised'' (i.e.~the Hecke operators preserve a $\ZZ_{(p)}$-lattice, and the power of $p$ is minimal with this property).

 \subsection{Automorphic representations}

  Let $\pi$ be a cuspidal automorphic representation of $G$. We say $\pi$ has ``weight $(r_1, r_2)$'' , for integers $r_1 \ge r_2 \ge -1$, if its infinitesimal character is $\nu^\vee + \rho$, where $\nu = (r_1, r_2; r_1 + r_2)$.
  \[ \omega_{\pi} = |\cdot|^{-(r_1 + r_2)} \widehat{\chi}\]
  for some Dirichlet character $\chi$, where $\widehat{\chi}$ is the adelic character mapping a uniformizer at $\ell$ to $\chi(\ell)$ for almost all primes $\ell$, as in \cite[\S 2.2]{LPSZ1}. For brevity, we say that ``$\pi$ has weight $(r_1, r_2)$ and character $\chi$''.

  We shall also suppose that $\pi$ is globally generic. Hence it is \emph{quasi-paramodular} in the sense of \cite{okazaki}; that is, there exists an explicit subgroup $K(\pi) = K(N_\pi,M_\pi) \subset G(\Af)$, the quasi-paramodular subgroup (depending on the conductor $N_\pi$ of $\pi$, and the conductor $M_\pi$ of its central character), such that $\pif$ has one-dimensional invariants under $K(\pi)$. (See also \cite{robertsschmidt07} for the case of trivial central characters.)

  \begin{definition}
   Let $S$ be a finite set of primes including all primes such that $\pi_\ell$ is ramified, and let $\mathbb{T}^S$ be the Hecke algebra $E[G(\Af^S) / G(\widehat{\ZZ}^S)]$. Then $\pif$ determines a ring homomorphism $\lambda_\pi^S : \mathbb{T}^S \to E$; we write $I^S_\pi$ for its kernel.
  \end{definition}

  We shall consider the localisation at $I^S_{\pi}$ of various finite-dimensional $E$-vector spaces with $\mathbb{T}^S$-actions; this can be concretely defined as the maximal $E$-subspace on which the operators $t - \lambda^S_{\pi}(t)$ are nilpotent for all $t \in \mathbb{T}^S$ (a ``generalised eigenspace''). As with all localisations, this is an exact functor, while the usual eigenspace is not.

  \begin{proposition}
   Let $n = 1$ or $n = 2$. Then the localised cohomology groups $H^i(S^{\tor}_{K(\pi)}, \cV_{\kappa_n(\nu)})_{(I_\pi^S)}$ and $H^i(S^{\tor}_{K(\pi), E}, \cV_{\kappa_n(\nu)}(-D))_{(I_\pi^S)}$ are zero for $i \ne 3-n$; and for $i = 3-n$, both are 1-dimensional and the natural map between them is an isomorphism. In particular, these localisations are independent of $S$.
  \end{proposition}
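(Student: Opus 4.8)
The plan is to deduce everything from the classical theory of coherent cohomology of automorphic vector bundles on $S^{\tor}_{K(\pi)}$ together with the fact that $\pi$ is globally generic, hence appears in cohomology with multiplicity one and only in the ``middle'' degree predicted by the combinatorics of ${}^M W_G$. First I would record, using the discussion of the BGG complex above, that $\nu = (r_1, r_2; r_1+r_2)$ is dominant with $r_1 \ge r_2 \ge 0$ (the edge case $r_2 = -1$ can be excluded for $n = 1, 2$, since there $\kappa_1, \kappa_2$ are not in the ``$H^0$ range'' anyway and one argues separately, or one simply assumes $r_2 \ge 0$ as in the main theorems), so that $C(\kappa_n(\nu))^- = \{w_n\}$ and $C(\kappa_n(\nu))^+ = \{w_{3-n}\}$, both singletons of length $n$ and $3-n$ respectively. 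By the last bullet point of the ``reminders'' subsection, an automorphic representation of infinitesimal character $\nu^\vee + \rho$ — in particular $\pi$ — can therefore contribute to $H^i(S^{\tor}_{K(\pi)}, \cV_{\kappa_n(\nu)})$ \emph{only} for $i = 3-n$. This already pins down the vanishing for $i \ne 3-n$ once we know that the localisation at $I^S_\pi$ only sees $\pi$ and its near-equivalence class.

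Next I would address the non-vanishing and one-dimensionality in degree $i = 3-n$. Here the input is: (i) $\pi$ globally generic implies $\pi$ is not CAP and not a lift from a smaller group, so by the classification of the discrete automorphic spectrum of $\GSp_4$ (Arthur, Gee--Ta\"ibi) the near-equivalence class of $\pi$ contributing to cohomology consists of $\pi$ alone, with $\pi_f^{K(\pi)}$ one-dimensional by quasi-paramodularity (Okazaki, Roberts--Schmidt); (ii) at the archimedean place, the generic discrete-series $L$-packet member with infinitesimal character $\nu^\vee + \rho$ contributes to $(\mathfrak{g}, K)$-cohomology with coefficients in the relevant $M_G$-representation in exactly the bidegree corresponding to the Kostant representative $w_n$, with multiplicity one — this is the coherent-cohomology analogue of the Vogan--Zuckerman calculation, and is exactly what the bullet points above encode. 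Combining (i) and (ii), after localising at $I^S_\pi$ the only surviving contribution to $H^\bullet(S^{\tor}_{K(\pi)}, \cV_{\kappa_n(\nu)})$ is a single line in degree $3-n$. The Eisenstein/non-cuspidal contributions are killed either because they are not generic (so do not meet the $I^S_\pi$-localisation) or by a boundary-cohomology estimate; since $\cV_{\kappa_n(\nu)}$ for $n = 1, 2$ is not in the range where $H^0$ is the only nonzero cohomology, one should be slightly careful, but genericity of $\pi$ rules out the relevant boundary strata contributing to this eigensystem.

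For the comparison between cohomology with and without the cuspidal (sub-canonical-extension) twist $(-D)$, I would use the long exact sequence relating $\cV_{\kappa_n(\nu)}(-D)$, $\cV_{\kappa_n(\nu)}$, and the cohomology of the boundary $\partial S^{\tor}_{K(\pi)}$ with coefficients in the restriction of $\cV_{\kappa_n(\nu)}$. The boundary of the Siegel threefold is built from (compactified) modular curves and rank-one pieces; its coherent cohomology in the relevant weights supports only Eisenstein or Saito--Kurokawa-type eigensystems, none of which are globally generic, so $\partial$-cohomology vanishes after localising at $I^S_\pi$. Hence the natural map $H^{3-n}(S^{\tor}_{K(\pi)}, \cV_{\kappa_n(\nu)}(-D))_{(I^S_\pi)} \to H^{3-n}(S^{\tor}_{K(\pi)}, \cV_{\kappa_n(\nu)})_{(I^S_\pi)}$ is an isomorphism, both sides being the $1$-dimensional cuspidal eigenspace; independence of $S$ is then immediate since enlarging $S$ only adds unramified Hecke operators, which act by scalars on a $1$-dimensional eigenspace and so do not change the localisation. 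The main obstacle I anticipate is item (ii) above together with the boundary analysis: one must know precisely that \emph{generic} members of the archimedean $L$-packet contribute to coherent cohomology in the single bidegree $3-n$ (and with multiplicity one), and that no boundary stratum contributes a generic eigensystem — this is where the special structure of $\GSp_4$ (all Kostant representatives of distinct length, quasi-paramodular newvectors, Arthur's classification excluding CAP forms for generic $\pi$) does the real work; the rest is formal homological algebra with the BGG and boundary exact sequences.
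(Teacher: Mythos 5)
Your overall strategy is close to the paper's: both arguments reduce the statement to a computation in the automorphic spectrum via the identification of coherent cohomology with cohomology of the space of automorphic forms, then use multiplicity one, the Kostant combinatorics for which degrees a given archimedean $L$-packet member hits, and quasi-paramodularity for the local dimension count. The paper cites Su's theorem realising coherent cohomology of $S^{\tor}_{K(\pi)}$ as $(\mathfrak{p}, K)$-cohomology of the \emph{entire} space of automorphic forms, which already builds in the boundary/non-cuspidal contributions; you instead handle the boundary separately via a long exact sequence. That is a legitimate alternative, but your justification that ``no boundary stratum contributes a generic eigensystem'' is left as an assertion, whereas Su's result makes this step uniform.

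The more substantive gap is in your item (i). You claim that because $\pi$ is generic (hence not CAP, not a lift), ``the near-equivalence class of $\pi$ contributing to cohomology consists of $\pi$ alone.'' This does not follow from ``not CAP'' alone: Arthur's classification leaves open the possibility of another representation $\sigma$ in the same near-equivalence class which agrees with $\pi$ outside $S$, is globally non-generic, but is locally non-generic only at one or a few places (e.g.\ the non-generic member of a finite local $L$-packet, or the holomorphic member at $\infty$). Such $\sigma$ would still be hit by localisation at $I^S_\pi$, so one must show it contributes nothing to $H^\bullet(S^{\tor}_{K(\pi)}, \cV_{\kappa_n(\nu)})$. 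The paper does this in two steps: at $\infty$, the non-generic (holomorphic) member of the archimedean $L$-packet does not contribute to $H^1$ or $H^2$; at a finite place $v$, the non-generic member of the local $L$-packet is \emph{not quasi-paramodular}, by (the proof of) Okazaki's Theorem 6.10, so has no $K(\pi)_v$-fixed vectors. You invoke quasi-paramodularity only to compute $\dim \pi_f^{K(\pi)} = 1$, not to rule out these other $\sigma$; without the Okazaki input, the one-dimensionality and the vanishing in degrees $\ne 3-n$ are not established. Filling this in is precisely where the technical weight of the proof lies.
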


  \begin{proof}
   By results of Su \cite{su-preprint} (building on earlier works of Harris and others), the coherent cohomology of $S^{\tor}_{K(\pi)}$ can be expressed as the $(\mathfrak{p}, K)$-cohomology of the space of automorphic forms of level $K(\pi)$. Hence it has a filtration, stable under the Hecke action, whose graded pieces are the $K(\pi)$-invariants in the finite parts of automorphic representations of $G$.

   The only representations which can contribute to the localisation at $I_\pi^S$ are those which are locally isomorphic to $\pi$ at all places outside $S$. By Arthur's classification, we can conclude that $\sigma_v$ is non-generic for some place $v$, but lies in the same $L$-packet as the generic representation $\pi_v$. The non-generic, holomorphic representation in the $L$-packet of $\pi_\infty$ does not contribute to $H^1$ or $H^2$. For a finite place $v$, by \cite[Theorem 6.10]{okazaki}\footnote{Or more precisely its proof: we do not know that $\sigma_v$ is tempered, but the alternate input that it is a non-generic member of a generic $L$-packet implies that it lands in one of the same Sally--Tadic types considered in op.cit.}, $\sigma_v$ is not quasi-paramodular (of any level) so it cannot contribute to the cohomology of level $K(\pi)$. So we are left with the contribution from $\pi$ itself; and since $\pi$ has multiplicity one in the discrete spectrum, and the $K(\pi)$-invariants of $\pif$ are 1-dimensional, we are done.
  \end{proof}

  \begin{definition}
   Let $\cW(\pif)_E$ be the $E$-rational part of the Whittaker model of $\pif$, as in \cite[\S 10.2]{LPSZ1}, and define
   \[ S^2(\pi, E) = \Hom_{E[G(\Af)]}\left( \cW(\pi)_E, \varinjlim_K H^2(S^{\tor}_{K, E}, \cV_{\kappa_1(\nu)}(-D))\right), \]
   which is a 1-dimensional $E$-vector space. We define $S^2(\pi, F)$ for any extension $F/E$ similarly.
  \end{definition}

  Since the space of $K(\pi)$-invariants in $\cW(\pif)_E$ has a canonical basis vector $W^{\new}_\pi$, normalised so that $W^{\new}_\pi(1) = 1$, we can identify $S^2(\pi, E)$ with $H^i(S^{\tor}_{K(\pi), E}, \cV_{\kappa_n(\nu)}(-D))_{(I_\pi^S)}$ via evaluation at the new vector. Given $\eta \in S^2(\pi, E)$, we let $\eta_{\sph}$ be its image under this map.

  \begin{definition}
   Given a non-zero $\eta \in S^2(\pi, L)$, where $L$ is some $p$-adic field with an embedding from $E$, we define periods $\Omega_p(\pi,\eta) \in L^\times$ and $\Omega_\infty(\pi, \eta) \in \CC^\times$ as in \cite[\S 6.8]{LPSZ1}.
  \end{definition}

  These two periods are only unique up to multiplication by $E^\times$, but the ratio $\Omega_p / \Omega_\infty$ is uniquely determined once $\eta$ is given.

 \subsection{P-stabilisation}\label{sect:pstab}

  \begin{definition}
   Suppose $\pi$ is unramified at $p$. By a \emph{$p$-stabilisation} of $\pi$, we mean a choice of one among the $W_G$-orbit of characters of $T(\Qp)$ from which $\pi_p$ is induced.
  \end{definition}

  Extending $E$ if necessary, we suppose that the $p$-stabilisations take values in $E$. They biject with the orderings of the Hecke parameters $\alpha, \beta, \gamma, \delta$ of $\pi_p$ respecting the relation $\alpha\delta = \beta\gamma = p^{r_1 + r_2 + 3} \chi_\pi(p)$, and also with the systems of eigenvalues of $\cU'_{\Sieg}$ and $\cU'_{\Kl}$ acting on $(\pi_p)^{\Iw(p)}$:  we can order the parameters so that $\cU'_{\Sieg}$ acts as $\alpha$, and $\cU_{\Kl}'$ acts as $\tfrac{\alpha\beta}{p^{r_2 + 1}}$.

  \begin{definition}
   Let $\mathbb{T}^{-}_{\Iw}$ be the product of $\mathbb{T}^{S \cup \{p\}}$ and the subalgebra of $E[\Iw(p) \backslash G(\Qp) /\Iw(p)]$ generated by $\cU'_{\Sieg}$, $\cU'_{\Kl}$, and $\langle p \rangle$, so that a $p$-stabilisation of $\pi$ determines a character $\lambda^-_\pi: \mathbb{T}^- \to E$ with some kernel $I_\pi^-$.
  \end{definition}

  We say that the $p$-stabilisation is \emph{$p$-regular} if its stabiliser in the Weyl group is trivial; if this holds, the generalised eigenspace in $(\pi_p)^{\Iw(p)}$ associated to $\lambda^-_\pi$ is 1-dimensional. We say a $p$-stablisation is \emph{ordinary} if it maps $\cU'_{\Sieg}$ and $\cU'_{\Kl}$ to $p$-adic units (with respect to some embedding of $E$ into a $p$-adic field $L$). If $\pi$ has regular weight, an ordinary $p$-stabilisation is unique if it exists, and if so, it is automatically $p$-regular; this is no longer true if $r_2 = -1$.

  Any choice of $p$-regular $p$-stabilisation determines an isomorphism
  \[ S^2(\pi, E) \cong H^2\left(S^{\tor}_{K^p(\pi) \Iw(p), E}, \cV_{\kappa_n(\nu)}(-D) \right)_{(I_\pi^-)},\qquad \eta \mapsto \eta_{\Iw}, \]
  given by evaluation at the vector $W_{\alpha, \beta}^{\prime, \Iw} \in \cW(\pi_p)$ defined in \cite{LZ20b-regulator}.

 \section{Summary of higher Coleman theory for $G$}

 \subsection{Outline}

  We briefly survey the results we shall need from \cite{boxerpilloni20}. Here the group $H$ will not appear, so we drop unnecessary subscripts $G$; we also fix a level $K^p$ away from $p$ (and suppress it from the notation).

  \begin{itemize}
   \item We begin with the \textbf{classical cohomology} $R\Gamma(\kappa_n) = R\Gamma(S_{K^p\Iw(p)}^{\tor}, \cV_{\kappa_n}) \otimes_{\QQ} \Qp$, a complex of finite-dimensional $\Qp$-vector spaces, and its cuspidal analogue $R\Gamma(\kappa_n, \cusp)$.

   \item For each $w \in {}^M W$, each algebraic weight $\kappa$ dominant for $M_G$, and each sign $\pm$, we have complexes of $\Qp$-Banach spaces $R\Gamma_w(\kappa)^{\pm}$ \textbf{(``overconvergent cohomology'')}. These are defined as relative cohomology groups for a stratification of $\cS_{K^p\Iw(p)}^{\tor}$, with coefficients in $\cV_{\kappa}$. We have finite-slope decompositions for the action of $(\cU_{\Sieg}, \cU_{\Kl})$ on $R\Gamma_w(\kappa)^+$, and for $(\cU'_{\Sieg}, \cU'_{\Kl})$ on $R\Gamma_w(\kappa)^-$.

   \item There is a spectral sequence (for either sign) relating the $R\Gamma_w(\kappa)^{\pm}$ for varying $w$ to the classical cohomology. If $\kappa$ is regular, and we localise at an eigenspace of strictly small slope, then only one $w \in {}^M W$ contributes to this spectral sequence (the unique one such that $w \in C(\kappa)^{\pm}$) so the localised spectral sequence degenerates.

   \item There are complexes of $\Qp$-Banach spaces $R\Gamma_{w, \an}(\nu)^{\pm}$ (\textbf{``locally analytic cohomology''}), which are defined by replacing the coherent sheaves $\cV_{\kappa}$ with some Banach sheaves of analytic functions (for sign $+$) or distributions (for sign $-$). These are defined for any locally analytic character $\nu$.

   \item For $\kappa$ algebraic and $M$-dominant, there is a second spectral sequence relating $R\Gamma_w(\kappa)^{\pm}$ to the locally-analytic cohomologies $R\Gamma_{w, \an}(\nu)^{\pm}$ for a collection of $\nu$ depending on $\kappa$ and $w$. Again, for regular weights this spectral sequence will degenerate on the small-slope part.

   \item The locally-analytic cohomology complexes also make sense with coefficients in an affinoid algebra $A$, giving complexes of Banach $A$-modules $R\Gamma_{w, \an}(\nu_A)^{\pm}$ (``\textbf{cohomology in families}''), and there is a Tor spectral sequence relating these to $R\Gamma_{w, \an}(\nu)^{\pm}$ when $\nu$ is a specialisation of $\nu_A$ at some point of $\operatorname{Max}(A)$.
  \end{itemize}

  \begin{remark}[Levels and overconvergence radii]
   It is important to note that the complexes $R\Gamma_w(\kappa)^{\pm}$ and $R\Gamma_{w, \an}(\nu_A)^{\pm}$ are not quite canonical, since they depend on various choices (radii of overconvergence, and levels at $p$). However, the maps arising from changing these parameters induce isomorphisms on the finite-slope parts.
  \end{remark}

  All of the above constructions also have a ``cuspidal'' flavour (tensoring all the coefficient sheaves with the ideal sheaf of the toroidal boundary).
%

 \subsection{The spectral sequences} We now spell out the above constructions in slightly more detail.

  \begin{remark}
   In \emph{op.cit.} the tame level $K^p$ is assumed to be \emph{neat}, but it will be convenient to relax this: if $K^p$ is any open compact in $G(\Af^p)$, then we choose an auxiliary smaller subgroup $L^p \trianglelefteqslant K^p$ which is neat (such subgroups always exist) and define complexes $R\Gamma_w(K^p, \kappa)^\pm$ etc as the $(K_p / L_p)$-invariants of the corresponding complexes at level $L_p$. This commutes with taking cohomology, since $K_p/L_p$ is a finite group and all our complexes will be complexes of $\QQ$-vector spaces. (This will allow us to choose $K^p$ to be a paramodular group, in the sense of \cite{robertsschmidt07}.)
  \end{remark}

  \subsubsection{Overconvergent to classical cohomology}

   This is the spectral sequence of \bp{Theorem 5.15}. In the ``minus'' case (which interests us most) it will be
   \begin{equation}
    \label{eq:bruhatSS}
    E_1^{ij} = H^{i + j}_{w_{3-i}}(\kappa, \cusp)^{-, \fs} \Rightarrow H^{i+j}(\kappa, \cusp)^{-,\fs},
   \end{equation}
   and similarly for non-cuspidal cohomology. If we apply the strictly-small-slope condition $(-, \sss^M)$, then only the terms with $w \in C(\kappa)^-$ survive; if $\kappa = \kappa_i(\nu)$ with $\nu$ dominant, then this is only $w = w_i$, so we have (\bp{Theorem 5.66}):
   \[ R\Gamma(\kappa_i(\nu), \cusp)^{-,\sss^M} = R\Gamma_{w_i}(\kappa_i(\nu), \cusp)^{-, \sss^M}. \]
   The same applies without the cuspidal condition, but for cuspidal cohomology we obtain the additional information that the complex is concentrated in degrees $[0, 3-i]$, by \bp{Theorem 5.15}.

  \subsubsection{Locally-analytic to overconvergent}

   We only really care about the edge map of the spectral sequence here. In our notation this map is
   \[
    R\Gamma_{w_1, \an}(\nu, \cusp)^{-, \fs} \to R\Gamma_{w_1}(\kappa_1(\nu), \cusp)^{-,\fs}.
   \]
   By \bp{Corollary 6.36}, this is an isomorphism on the eigenspaces satisfying the slope condition $(-,\sss_{M, w_1}(\kappa_1))$ in \emph{op.cit.}. Moreover, $R\Gamma_{w_1, \an}(\nu, \cusp)^{-, \fs}$ is concentrated in degrees $[0,1, 2]$, by \bp{Theorem 6.29}.

  \subsubsection{The Tor sequence}

   The last spectral sequence we need takes the form
   \[ E_2^{pq} = \Tor_{-p}^A( H^q_{w_1, \an}(\nu_A, \cusp)^{-, \fs}, \nu) \Longrightarrow H^{p+q}_{w_1, \an}(\nu, \cusp)^{-, \fs}\]
   where $A$ is some affinoid algebra with a continuous character $\nu_A: T(\Zp) \to A^\times$, and the algebraic character $\nu$ defines a $K$-point of $\operatorname{Max}(A)$. Here we cannot rely on ``small slope'' arguments in order to make the sequence degenerate (since there is no reason to expect the slopes on the higher Tor terms to be bigger than the slopes on the $\Tor_0$ term).

 \subsection{Slope conditions}

  We consider the Hecke operators normalised as in \cref{sect:heckeops}. on the cohomology of the sheaves $\cV_{\kappa}$, for $\kappa$ as in \eqref{eq:ourweights}. Thus each operator is ``minimally integrally normalised'' acting on the classical cohomology (cuspidal or non-cuspidal), i.e.~its slopes are $\ge 0$.

  \subsubsection{Expected and provable slope bounds for overconvergent cohomology}

   \bp{Conjecture 5.29} predicts lower bounds for the slopes of the Hecke operators acting on the overconvergent cohomology complexes $R\Gamma_w(K^p, \kappa)^{\pm}$ and $R\Gamma_w(K^p, \kappa, \cusp)^{\pm}$; there is a similar conjecture \bp{Conjecture 6.33} for the locally-analytic cohomology complexes (where there are more possibilities, since these are defined for weights which might not be $M_G$-dominant).

   These are summarized by the following table, in which we compute for various elements $w \in W_G$ the character $w^{-1}(\kappa + \rho) - \rho$, and how it pairs with the anti-dominant cocharacters $\diag(1,1,x,x)$ and $\diag(1,x,x,x^2)$ defining the operators $\cU'_{\Sieg}$ and $\cU'_{\Kl}$. We take $\kappa = \kappa_1 = (r_1, -r_2-2; r_1 + r_2)$, and subtract $r_2$ from all entries in the bottom row (since this is our normalising constant for $\cU'_{\Kl}$). This gives the following table:

   \begin{center}
   \begin{tabular}{c|cccc|c}
    $w=$  & id & $(w_1)$ & $w_2$ & $w_3$ & $w^{\max}_M w_1$\\
    \hline
    $\cU'_{\Sieg}$ & $r_2+1$ & $(0)$ & $0$         & $r_1+2$     & $r_1+r_2+3$\\
    $\cU'_{\Kl}$   & $0$     & $(0)$ & $r_1-r_2+1$ & $r_1-r_2+1$ & $r_1+r_2+3$\\
   \end{tabular}
  \end{center}

  We do not know this conjecture in full, but we know a weaker statement, \bp{Theorem 5.33 and Theorem 6.35}, in which we replace $w^{-1}(\kappa_1 + \rho) - \rho$ with $w^{-1} \kappa_1$. This gives the following bounds:

  \begin{center}\begin{tabular}{c|cccc|c}
  $w=$ & id & $(w_1)$ & $w_2$ & $w_3$ & $w^{\max}_M w_1$\\
   \hline
   $\cU'_{\Sieg}$ & $r_2+1$ & $(-1)$ & $-1$        & $r_1-1$ & $r_1 + r_2 + 1$\\
   $\cU'_{\Kl}$   & $0$     & $(0) $ & $r_1-r_2-2$ & $r_1-r_2-2$& $r_1 + r_2 + 2$\\
  \end{tabular}\end{center}

  \begin{remark}
   More precisely: for each $w$ we consider the weight $w^{-1}(\kappa_1 + \rho) - \rho$, and how it pairs with the anti-dominant torus elements $(1, 1, p, p)$ and $(1, p, p, p^2)$. The bottom row gets multiplied by $p^{-r_2}$ from our normalisation of Hecke operators.

   In the second table, we do the same computation with $w^{-1}\kappa_1$ instead of $w^{-1}(\kappa_1 + \rho) - \rho$.
  \end{remark}

%

   \subsubsection{Small slope conditions} We can now compute which eigensystems satisfy the various small-slope conditions of \bp{\S 5.11}.

   \begin{proposition}
    For the weight $\kappa_1 = (r_1, -r_2-2; r_1 + r_2)$, with $r_1 \ge r_2 \ge 0$, we have the following:

    \begin{itemize}
     \item The ``small slope'' condition $(-,\ss^M(\kappa_1))$ is
     \[ \lambda(\cU'_{\Sieg}) < 1 + r_2, \qquad \lambda(\cU'_{\Kl}) < 1 + r_1 - r_2.\]
     \item The ``strictly small slope'' condition $(-, \sss^M(\kappa_1))$ is
     \[ \lambda(\cU'_{\Sieg}) < 1 + r_2, \qquad \lambda(\cU'_{\Kl}) < -2 + r_1 - r_2.\]
     \item The condition $(-, \sss_{M, w_1}(\kappa_1))$ is implied by $(-,\sss^M(\kappa_1))$, and similarly for the non-strict versions.
    \end{itemize}
   \end{proposition}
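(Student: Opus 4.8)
The goal is to unwind the definitions of the small-slope conditions $(-,\ss^M(\kappa))$, $(-,\sss^M(\kappa))$ and $(-,\sss_{M,w_1}(\kappa))$ from \bp{\S 5.11} in the particular case $\kappa=\kappa_1=(r_1,-r_2-2;r_1+r_2)$, and read off the resulting numerical inequalities on $\lambda(\cU'_{\Sieg})$ and $\lambda(\cU'_{\Kl})$. So the whole statement is really a bookkeeping computation, driven by the two tables of slope bounds displayed just above in the excerpt. First I would recall precisely how \emph{op.cit.} defines these conditions: roughly, an eigensystem of slope $\lambda$ (for the pair of operators $\cU'_{\Sieg},\cU'_{\Kl}$) is ``small slope at $w$'' if $\lambda$ is strictly smaller than the slope bound attached to \emph{every} $w'\neq w$ in the relevant index set (for $\ss^M$ one ranges over all of ${}^MW_G$; the ``strictly small'' variant additionally asks that it beat the bound attached to $w^{\max}_{M_G}w$, which governs the non-degeneracy of the Tor/edge maps, cf.\ the last column of each table).

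**Carrying it out.** For the condition $(-,\ss^M(\kappa_1))$ I would use the \emph{first} table (the conjectural/optimal bounds from \bp{Conjecture 5.29}, which \emph{op.cit.} uses to \emph{state} the small-slope conditions even though only the weaker bounds of the second table are proved): the relevant competitors for $w_1$ are $\id$, $w_2$, $w_3$, with $\cU'_{\Sieg}$-bounds $r_2+1$, $0$, $r_1+2$ and $\cU'_{\Kl}$-bounds $0$, $r_1-r_2+1$, $r_1-r_2+1$. Pairing up the binding constraints — the $\cU'_{\Sieg}$-direction is controlled by $\id$ (bound $r_2+1$) since the $w_2$ entry $0$ is only relevant jointly with its large $\cU'_{\Kl}$-entry, and the $\cU'_{\Kl}$-direction is controlled by $w_2,w_3$ (bound $r_1-r_2+1$) — gives exactly $\lambda(\cU'_{\Sieg})<1+r_2$ and $\lambda(\cU'_{\Kl})<1+r_1-r_2$. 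For $(-,\sss^M(\kappa_1))$ one additionally imposes the constraint coming from the partner element, which in the $\cU'_{\Kl}$-direction tightens the bound from the second table's $w_3$-entry, here $r_1-r_2-2$; the $\cU'_{\Sieg}$-bound $1+r_2$ is unchanged because the extra element does not improve on it. That yields $\lambda(\cU'_{\Sieg})<1+r_2$, $\lambda(\cU'_{\Kl})<-2+r_1-r_2$. Finally, for $(-,\sss_{M,w_1}(\kappa_1))$ — the condition under which the locally-analytic-to-overconvergent edge map at $w_1$ is an isomorphism, \bp{Corollary 6.36} — one checks directly from the definition in \bp{\S 6} that the set of competitor bounds is a subset of those entering $\sss^M(\kappa_1)$ (it omits, in particular, the comparison against $\id$ or drops to the weaker ``$w^{-1}\kappa$'' bounds), so $(-,\sss^M(\kappa_1))\Rightarrow(-,\sss_{M,w_1}(\kappa_1))$ is automatic, and the same comparison with the non-strict bounds gives the non-strict implication.

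**The main obstacle.** The genuine content — and the only place where one must be careful — is matching the abstract definitions of $\ss^M$, $\sss^M$ and $\sss_{M,w}$ in \bp{\S 5.11, \S 6} with the concrete index sets and the choice of \emph{which} of the two tables (optimal vs.\ provable) feeds into each condition; \emph{op.cit.} uses the optimal bounds to \emph{define} the small-slope conditions but only proves the weaker bounds, and the ``strictly small'' refinement is precisely the device that makes the proven bounds suffice for degeneration of the spectral sequences. Once that dictionary is pinned down, each of the three bullet points is a two-line maximum/minimum over four (respectively five) explicit linear forms in $r_1,r_2$, and the last bullet is an inclusion of finite sets of inequalities. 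I would present the computation as a short table-chase, flagging explicitly in each case which row of which table is binding in each of the two Hecke directions, and leave the arithmetic of comparing $r_2+1$ with $0$, and $r_1-r_2+1$ with $r_1-r_2-2$, to the reader.
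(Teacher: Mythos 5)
Your final numerical answers match the proposition, and the paper's own proof is equally terse (``This follows from the tables of the previous section''), so at the level of the computation you and the paper are doing the same read-off. However, your narration of \emph{why} the numbers come out as they do mislocates the mechanism in Boxer--Pilloni in two places.

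First, you describe the difference between $\ss^M$ and $\sss^M$ as: the strict variant ``additionally asks that it beat the bound attached to $w^{\max}_{M_G}w$.'' That element $w^{\max}_{M_G}w_1$ is exactly the one that enters the $\sss_{M,w_1}$ condition of the \emph{third} bullet, not $\sss^M$. The actual distinction between $\ss^M(\kappa_1)$ and $\sss^M(\kappa_1)$ is simply that the former is cut out by the conjectural slope bounds $\langle w'^{-1}(\kappa_1+\rho)-\rho,\cdot\rangle$ (first table), while the latter is cut out by the provable bounds $\langle w'^{-1}\kappa_1,\cdot\rangle$ (second table), in both cases over the \emph{same} set of competitors $w'\in {}^M W_G\setminus\{w_1\}$. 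The bound $r_1-r_2-2$ you obtain for $\lambda(\cU'_{\Kl})$ is not produced by an interaction with a ``partner element'' tightening a $w_3$-entry; it is just the $w_2$ (equivalently $w_3$) entry of the second table replacing the corresponding entry $r_1-r_2+1$ of the first. The $\cU'_{\Sieg}$ bound $r_2+1$ is unchanged because the $\id$-entry is literally the same in both tables (since $\id^{-1}(\kappa_1+\rho)-\rho=\id^{-1}\kappa_1=\kappa_1$).

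Second, for the third bullet, the implication $\sss^M(\kappa_1)\Rightarrow\sss_{M,w_1}(\kappa_1)$ is not because ``the set of competitor bounds is a subset of those entering $\sss^M(\kappa_1)$'': the competitor sets $\{\id,w_2,w_3\}$ and $\{w^{\max}_M w_1\}$ are disjoint, so there is no containment. Rather, the implication holds because the last-column bounds $(r_1+r_2+1,\,r_1+r_2+2)$ at $w^{\max}_M w_1$ are numerically dominated by the $\sss^M$ constraints $(r_2+1,\,r_1-r_2-2)$ once $r_1\ge r_2\ge 0$, so satisfying $\sss^M$ already excludes that component. Since the proposition is purely a table look-up these slips do not change your final answer, but the bookkeeping is confused enough that I would suggest pinning down the definitions in \bp{\S 5.11} before presenting the argument.
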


   \begin{proof}
    This follows from the tables of the previous section.
   \end{proof}

   Thus, in any regular weight, ordinary classes have small slope (as one might reasonably expect). However, they fail the strict-small-slope condition unless $r_1 - r_2 \ge 3$.

 \subsection{Families of eigenclasses}
  \label{sect:families}

  We now briefly indicate the consequences of this theory for a cohomological automorphic representation. We let $\pi$ be as in \cref{sect:pstab}, and let $K^p$ denote the paramodular subgroup away from $p$ of the appropriate level. We suppose $\pi$ is ordinary at $p$, and also that $r_2 \ge 0$ and $r_1 - r_2 \ge 3$. Let $\lambda_\pi^-$ be its ordinary $p$-stabilisation, and $I_\pi^-$ the kernel of $\lambda^-_{\pi}$.

  Then the classicity theorems of higher Coleman theory recalled above give quasi-isomorphisms
  \[
   R\Gamma_{w_1, \an}(\nu, \cusp)^{-, \sss} \cong R\Gamma_{w_1}(\kappa_1, \cusp)^{-,\sss} \cong R\Gamma(\kappa_1, \cusp)^{-,\sss},
  \]
  and since $\lambda_{\pi}^-$ is ordinary, the localisation of $R\Gamma(\kappa_1, \cusp)$ at $I_{\pi}^-$ is contained in the strictly-small-slope part. So, from the results quoted above, the localisation of these three complexes at $I_\pi^-$ is 1-dimensional in the $H^2$, and vanishes in all other degrees.

  For the ``family'' cohomology, we use the local criterion for flatness: for a finitely-generated module $M$ over a local ring $(A, I)$, if $\Tor_1^A(M, A/I) = 0$, then $M$ is free. From this and the above $\Tor$ spectral sequence, we conclude that the localisation of $H^i_{w_1, \an}(\nu_A^\vee, \cusp)^{-, \fs}$ at $I_\pi^-$ is zero for $i \ne 2$, and is locally free of rank 1 over $A$ for $i = 2$.

  Thus the $p$-stabilised new-vector $\eta_{\Iw} \in H^2(K^p, \kappa_1, \cusp)_{(I_\pi)}$ associated to $\pi$ deforms to an analytic family over some open affinoid in the 2-dimensional weight space $\cW \times \cW$: for some sufficiently small $A \ni (r_1, r_2)$, we obtain a class $\uet \in H^2_{w_1, \an}(\nu_A^\vee, \cusp)$, and a homomorphism $\underline{\lambda}: \mathbb{T}^- \to A$ lifting $\lambda_{\pi}$, such that $ \mathbb{T}^-$ acts on $\uet$ via $\underline{\lambda}$, and the specialisation of $\uet$ at $(r_1, r_2)$ is $\eta_{\Iw}$. These results will be used below to construct our $p$-adic $L$-functions.

  Similar arguments apply to the modules $H^{3-i}_{w_i, \an}(\nu_A^\vee, \cusp)$ for all $i$ (although in the case $i = 0, 3$ the classical eigenspace associated to $\pi$ may be zero, if $\pi$ is a Yoshida lift).

  \begin{remark}
   Unsurprisingly, we could relax the assumption that $\pi$ be Borel-ordinary at $p$, as long as it admits some refinement satisfying the $\sss^M$ and $\sss_{M, w_1}$ conditions.
  \end{remark}


\section{Functoriality of higher Coleman theory (overview)}

 The next few sections, which are the main technical context of the present paper, are devoted to constructing maps relating higher Coleman theory spaces for $G$ and for $H$. More precisely, we shall define three maps of complexes
 \begin{subequations}
  \begin{align}
   R\Gamma^G(\kappa_1, \cusp) &\longrightarrow R\Gamma^H(\tau, \cusp) &&\text{(classical)} \\
   R\Gamma^G_{w_1}(\kappa_1, \cusp)^{-, \fs} &\longrightarrow R\Gamma^H_{\id}(\tau, \cusp)^{-} &&\text{(overconvergent)} \\
   R\Gamma^G_{w_1, \an}(\nu_A, \cusp)^{-, \fs} &\longrightarrow R\Gamma^H_{\id, \an}(\tau_A, \cusp)^{-} &&\text{(locally-analytic)}
  \end{align}
 \end{subequations}
 satisfying appropriate compatibilities. In the first two cases, $\tau$ is a weight for $H$ in an appropriate range depending on $(r_1, r_2)$ (see \cref{sect:branchcoeffs}; in the third case, $\nu_A$ and $\tau_A$ are families of weights for $G$ and $H$ with an appropriate relation between them (see \cref{def:compat}).

 It is important to note that these maps do \emph{not} land in the finite-slope part on the right-hand side. In particular, the resulting complexes depend on various auxiliary choices of parameters, and changing these induces an isomorphism on the finite-slope part but not on the whole complex; in order to obtain a uniquely-determined map we shall pass to the inverse limit.

\section{Maps of flag varieties}
 We now delve a little further into the construction of the overconvergent and locally-analytic cohomology complexes, in order to study how they interact with pullback along $H \into G$.

 \subsection{Bruhat cells and tubes} We consider the following $\Zp$-subschemes of $\FL_G$:

  \begin{definition}
   For $w \in {}^M W_G$, we define:
   \begin{itemize}
   \item $C^G_{w} = P_G \backslash P_G w B_G$, a locally closed subscheme;
   \item $X^G_w = \bigcup_{w' \le w} C^G_{w'}$, a closed subscheme (the closure of $C^G_w$);
   \item $Y^G_w = \bigcup_{w' \ge w} C^G_{w'}$, an open subscheme.
   \end{itemize}
  \end{definition}

  We shall mostly be interested in their special fibres $C^G_{w, \Fp}$ etc. Note that the dimension of $C^G_{w}$, or of $X^G_w$, is $\ell(w)$.

  \begin{remark}
   The double coset $P_G g B_G$ for a given $g \in G$ depends only on the span of the rows of the $2 \times 2$ lower-left submatrix of $g$. If this span is zero, $g$ is in the small cell; if it is $(0, \star)$ then we are in the 1-dimensional cell; if it is 1-dimensional but not equal to $(0, \star)$, then we are in the 2-dimensional cell; if it is the whole of $\AA^2$, we are in the big cell.
  \end{remark}

  Similar definitions apply for $H$ in place of $G$, although we shall only use $C^H_{\id}$ in the present work.


 \subsection{Maps of flag varieties}

  Let $\gamma=\begin{smatrix}1 &&&\\1 & 1 &&\\&&\phantom{-}1 &\\ &&-1& 1 \end{smatrix}$, and let $\hat{\gamma}=\gamma w_1 = \begin{smatrix} 1& & & \\ 1 && 1& \\ & -1&& \\ & \phantom{-}1& & 1\end{smatrix}$. Then the map $\hat\iota: \FL_H \to \FL_G$ sending $B_H h$ to $P_G h \hat\gamma$ is a closed immersion of $\Zp$-schemes (since it is a translate of the obvious closed embedding induced by $H \into G$). It maps the identity to $w_1$, since $\gamma \in P_{\Sieg}$.

  \begin{proposition}
   \label{prop:intersectcells}
   The intersections of the Bruhat cells of $\FL_G$ with $\hat\iota(\FL_H)$ are as follows:
   \begin{itemize}
   \item $\hat\iota^{-1}\left(C^G_{\id}\right)= \varnothing$;
   \item $\hat\iota^{-1}\left(C^G_{w_1}\right) = C^H_{\id}$.
   \end{itemize}
  \end{proposition}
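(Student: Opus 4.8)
The plan is to reduce both assertions to an explicit matrix computation, fed into the description of the Bruhat cells of $\FL_G$ recalled above: the cell $C^G_w$ containing a point $P_G g$ is governed by the span of the two rows of the lower-left $2\times 2$ block of $g$, the cases (span $0$, the line $\{(0,\star)\}$, any other line, all of $\AA^2$) corresponding to $w = \id, w_1, w_2, w_3$ respectively. Since $\hat\iota$ is a morphism and the $C^G_w$ are locally closed subschemes, $\hat\iota^{-1}(C^G_{\id})$ and $\hat\iota^{-1}(C^G_{w_1})$ are locally closed subschemes of $\FL_H$; I would first determine their points over fields. A $\bar k$-point of $\FL_H = B_H\backslash H$ is represented by some $h = \left[\stbt{a}{b}{c}{d},\stbt{a'}{b'}{c'}{d'}\right] \in H(\bar k)$, and a direct multiplication gives
\[
 \iota(h)\,\hat\gamma \;=\; \begin{smatrix} a & b & 0 & b \\ a' & -b' & a' & 0 \\ c' & -d' & c' & 0 \\ c & d & 0 & d \end{smatrix},
\]
whose lower-left $2\times 2$ block is $\stbt{c'}{-d'}{c}{d}$. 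So $\hat\iota(B_H h)$ lies in $C^G_{\id}$ exactly when $c = d = c' = d' = 0$, and in $C^G_{w_1}$ exactly when $c = c' = 0$ and $(d,d') \ne (0,0)$.

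Both conclusions then drop out of invertibility of the two $\GL_2$-blocks of $h$. The first condition annihilates the bottom rows of both blocks and so is impossible; hence $\hat\iota^{-1}(C^G_{\id})$ has no points over any field, so it is empty. In the second case $c = c' = 0$ makes both blocks upper-triangular, hence (being invertible) forces $d, d' \in \bar k^\times$, so the condition $(d,d') \ne (0,0)$ is automatic; thus $\hat\iota(B_H h) \in C^G_{w_1}$ iff $h \in B_H$, i.e. iff $B_H h$ is the base point $C^H_{\id}$. So $\hat\iota^{-1}(C^G_{w_1})$ and $C^H_{\id}$ have the same points over every field.

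It remains to see that $\hat\iota^{-1}(C^G_{w_1})$ is reduced, so that this identification holds as subschemes. For this I would compute in the standard affine chart $V \cong N^-_H \cong \AA^2$ of $\FL_H$ about $C^H_{\id}$, where $h = \left[\stbt{1}{0}{x}{1},\stbt{1}{0}{y}{1}\right]$: one finds that $\hat\iota$ maps $V$ into the affine chart of $\FL_G$ about $P_G w_1$ consisting of Lagrangian $2$-planes transverse to $\langle e_2, e_4 \rangle$ (identifying $\FL_G$ with the Lagrangian Grassmannian, so that $P_G w_1$ is $\langle e_1, e_3 \rangle$), with coordinates $(\alpha, \beta, \gamma)$ for which $C^G_{w_1}$ — the Lagrangians containing the isotropic line $\langle e_1 \rangle$, other than $\langle e_1, e_2\rangle$ — is cut out by $\alpha = \beta = 0$; and in these coordinates the map is $(\alpha,\beta,\gamma) = (y, -(x+y), y)$. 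Therefore $\hat\iota^{*}$ of the ideal of $C^G_{w_1}$ is $(y, x+y) = (x, y)$, the maximal ideal at the origin — equivalently, $\hat\iota(\FL_H)$ meets $C^G_{w_1}$ transversally at $P_G w_1$ — so $\hat\iota^{-1}(C^G_{w_1}) = C^H_{\id}$. This last step, which requires setting up compatible affine charts on the two flag varieties and identifying the local equations of the Bruhat stratum $C^G_{w_1}$ in $\FL_G$, is the only part needing real care; everything else is bookkeeping with the matrices $\iota(h)$ and $\hat\gamma$.
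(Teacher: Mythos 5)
Your argument is correct and is essentially the paper's proof: compute $\iota(h)\hat\gamma$, extract the lower-left $2\times 2$ block, and read off the cell from the span of its rows; the conclusion then drops out of invertibility of the two $\GL_2$ factors of $h$, exactly as the paper does (the paper phrases the same calculation using homogeneous coordinates on $\FL_H\cong\mathbf{P}^1\times\mathbf{P}^1$). Your final paragraph, which checks scheme-theoretic reducedness of $\hat\iota^{-1}(C^G_{w_1})$ using the local coordinates of \S\ref{sect:explicitparam} and \eqref{eq:gamma-param}, goes slightly beyond what the paper records; it is correct, but is not strictly needed for the later applications, since the tubes in \cref{prop:Cartadic} and the sets $\tI^G_{m,n}$ depend only on the underlying topological spaces of the special fibres.
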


  \begin{proof}
   This is an elementary linear-algebra computation. We can identify $\FL_H$ with $\mathbf{P}^1 \times \mathbf{P}^1$ in such a way that the strata are $\infty \times \infty, \infty \times \AA^1$, $\AA^1 \times \infty$ and $\AA^2$. The map to $\FL_G$ sends $((x:y), (X : Y))$ to the plane spanned by $(x,-y,0,-y)$ and $(X,Y,X,0)$. The projection of this plane to the first two coordinate vectors is never 0, so $\hat\iota^{-1}\left(C^G_{\id}\right)= \varnothing$. Its projection to the first coordinate vector is 0 iff $x = X = 0$, i.e.~$\hat\iota^{-1}\left(C^G_{w_1}\right) = C^H_{\id}$.
  \end{proof}

  \begin{remark}
   Conceptually, this is saying that the subspace $\hat\iota(\FL_H) \subset \FL_G$, which is 2-dimensional, intersects the Bruhat strata $C^G_{\id}$ and $C^G_{w_1}$ in the ``expected'' codimensions, and these codimensions are themselves Bruhat strata in $\FL_H$: it has empty intersection with the codimension 3 stratum, and its intersection with the codimension 2 stratum is the unique 0-dimensional. (Its intersection with the codimension 1 stratum $C^G_{w_2}$ also has the expected dimension, but it is not a Bruhat stratum; however, this plays no role in our theory.)
  \end{remark}

 \subsection{Explicit coordinates on the flag variety}
  \label{sect:explicitparam}

  For $w \in {}^M W_G$ we let $U^G_w$ be the ``big cell at $w$'', i.e. the translate of the big cell $P_G \backslash P_G \bar{N}_G$ by $w$. This is accordingly an open neighbourhood of $w$ naturally parametrised by $\bar{N}_G \cong \AA^3$, which we identify with $2 \times 2$ off-symmetric matrices $Z = \stbt x y z x$, via $Z \mapsto P_G \backslash P_G \stbt{1}{}{Z}{1} w$.

  In these coordinates, the action of $g \in G$ is given as follows: we have
  \[ P \backslash (P \stbt{1}{}{Z}{1} w) \cdot g = P \backslash (P \stbt{1}{}{Z}{1} w g w^{-1}) w, \]
  and if we write $w g w^{-1} = \stbt{A}{B}{C}{D}$, then
  \[ (P \stbt{1}{}{Z}{1} w g w^{-1}) = P \stbt{1}{}{Z'}{1},\qquad Z' = (Z B + D)^{-1} (ZA + C).\]

  \begin{remark}
   This is a mildly twisted version of the familiar formula for the action of the Siegel modular group on the complex-analytic Siegel half-space, which is of course nothing but an open subset of the $\CC$-points of $\FL_G$.
  \end{remark}

  As a special case of this formula, if $\delta = \diag(p^3, p^2, p, 1)$ and $w = w_1$, then
  \[ \stbt x y z x \cdot \delta = \stbt{px}{p^{-1}y}{p^3 z}{px}.\]

  Similarly, the big cell for $H$ (at $w = \id$) is isomorphic to $\AA^2$ via $(z_1, z_2)\mapsto B_H \backslash B_H\left( \stbt{1}{}{z_1}{1},\stbt{1}{}{z_2}{1}\right)$. In these coordinates on $U^H_{\id}$ and $U^G_w$, we compute that the map $\hat\iota$ is given by
  \begin{equation}
   \label{eq:gamma-param}
   (z_1, z_2) \mapsto \stbt{z_2}{z_2}{z_1 + z_2}{z_2}.
  \end{equation}

  \begin{remark}
   More conceptually, we can identify the big cells $U^H_{\id}$ and $U^G_{w_1}$ with the Lie algebras $\bar{\mathfrak{n}}_H$ and $\bar{\mathfrak{n}}_G$, and this formula is just the composite of the natural inclusion $\bar{\mathfrak{n}}_H \subset \bar{\mathfrak{n}}_G$ and the adjoint action of $\gamma \in M_G$ on $\bar{\mathfrak{n}}_G$.
  \end{remark}


 \subsection{Analytic geometry: notations}

  Let us write $\tFL^H$ and $\tFL^G$ for the analytic adic spaces (over $\Spa(\Qp, \Zp)$) associated to $\FL_G$ and $\FL_H$. (We use typewriter letters for the flag varieties and open sets in them, since we shall later use calligraphic letters for subsets of the adic Shimura variety.)

  We now describe explicit isomorphisms between certain tubes in $\tFL^G$ and $\tFL^H$ and adic polydiscs. We will need to distinguish between \emph{four} flavours of ``disc'' inside the adic affine line. This is because the ``closed disc'' $\{|.| : |z| \le 1\}$, where $z$ is a coordinate function, is actually open (but not closed) in the adic topology, whereas the ``open disc'' $\{|.| : |z| < 1\}$ is closed (but not open)!

  For $m \in \QQ$, we define
  \[ \cB_m = \{ |.| : |z| \le |p|^{m} \},\qquad \overline{\cB}_m = \bigcap_{m' < m} \cB_m, \]
  and
  \[ \cB^{\circ}_m = \bigcup_{m' > m} \cB_{m'}, \qquad \overline{\cB}^\circ_m = \{ |.| : |z| < |p|^m\}.
  \]
  Thus $\cB^{\circ}_m \subset \overline{\cB}^\circ_{m} \subset \cB_m \subset\overline{\cB}_m$, and (as the notation suggests) $\overline{\cB}_m$ is the closure of $\cB_m$, and similarly $\overline{\cB}_m^\circ$ of $\cB^{\circ}_m$. Moreover, the sets $\overline{\cB}_m - \cB_m$ and $\overline{\cB}^\circ_m -\cB^{\circ}_m$ consist entirely of rank $> 1$ points.

  \begin{remark}
   Compare the four flavours of root subgroups in \bp{\S 3.3.2}. The space $\overline{\cB}_m$ corresponds to the ``dagger affinoid disc'' in Grosse-Kl\"onne's theory of dagger spaces.
  \end{remark}

  More generally, if $A$ is a subset of $\overline{\QQ}_p$, we write $A + \cB_m = \bigcup_{a \in A}(a + \cB_m)$ etc; we shall only use this if $A$ is compact, in which case the union is finite.

 \subsection{Level groups at $p$}

  \begin{notation} Let $t \in \ZZ_{\ge 1}$.
   \begin{itemize}
   \item Let $K_{\Iw}^G(p^t) = \{ g \in G(\Zp): g\bmod p \in B_G(\ZZ/p^t)\}$ be the depth $t$ upper-triangular Iwahori of $G$, and similarly for $H$.
   \item Let $K^H_{\ds}(p^t)$ denote the group $H(\Qp) \cap \hat\gamma K^G_{\Iw}(p^t) \hat\gamma^{-1}$, which is concretely given by
   \[ K^H_{\ds} =
    \left \{ h \in H(\Zp): h = \left(\stbt{x}{y}{0}{z},\stbt{x}{-y}{0}{z}\right) \bmod p^t \text{ \textup{for some} $x,y,z$}\right\}.\]
   \end{itemize}
  \end{notation}

  Note that $K^H_{\ds}(p^t)$ is, fortuitiously, a subgroup of $K^H_{\Iw}(p^t)$.

 \subsection{Tubes of ``radius one''}

  We note that if $\cX$ is the analytic adic space associated to a finite-type $\Zp$-scheme $X$, then there is a specialisation map $\sp: \cX \to X_{\Fp}$ which is a continuous map of topological spaces. If $Z \subset X_{\Fp}$ is a locally closed subset, we let $]Z[$ be the \emph{interior} of $\sp^{-1}(Z)$; this is the adic space corresponding to the tube in the sense of classical rigid geometry, while $\sp^{-1}(Z)$ is not a classical rigid space in general. Of course, if $Z$ is open, then $\sp^{-1}(Z) =\  ]Z[$; on the other hand, if $Z$ is closed, then $\sp^{-1}(Z) =\  \overline{]Z[}$.

  \begin{definition}
   Let $\tU^G_0 =\ ]Y^G_{w_1, \Fp}[$, $\tZ^G_0 = \ \overline{]X^G_{w_1, \Fp}[}$, and $\tI^G_{0,0} = \tZ^G_0 \cap \tU^G_0$.
  \end{definition}

  Note that $\tU^G_0$ is open and $\tZ^G_0$ closed, and both are invariant under the Iwahori $K^G_{\Iw}(p)$ (since the Borel subgroup of $G_{\Fp}$ fixes the mod $p$ Bruhat cells). Thus $\tI^G_{0, 0}$ is a ``partial closure'' of the Bruhat cell $]C_{w_1, \Fp}^G[$.

  We also write $\tZ^H_0 = \overline{]X^H_{\id}[}$ (the preimage of the point $\{\id_H\} \in \FL^H_{\Fp}$) which is stable under $K^H_{\Iw}(p)$, and we formally set $\tU^H_0 = \tFL^H$.

  \begin{proposition}
   We have $\tI^G_{0,0} \subset U^G_{w_1}$, and in the coordinates on $U^G_{w_1}$ described in \cref{sect:explicitparam}, we have
   \[ \tI^G_{0,0} = \{ \stbt x y z x : x, z \in \overline{\cB}_{0}^\circ, y \in \cB_0 \}.\]
   Similarly
   \[ \tI^H_{0,0} = \tZ^H_0 = \{ (z_1, z_2) : z_i \in \overline{\cB}_{0}^\circ \}.\]
  \end{proposition}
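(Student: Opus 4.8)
The plan is to express each of $\tZ^G_0$, $\tU^G_0$ and $\tI^G_{0,0}$ as $\sp^{-1}$ of an explicit union of Bruhat cells, and then read off coordinates via \cref{sect:explicitparam}. The first step is combinatorial: determine the trace of the Bruhat stratification of $\FL_G$ on the chart $U^G_{w_1}$. By the description of the cells through the row span of the lower-left $2\times 2$ block (recalled above), the $P_G$--$B_G$ double coset of the point with coordinate $Z = \stbt xyzx$ is controlled by the lower-left block of $\stbt 1{}Z1 w_1$, which one computes to be $\stbt x{-1}z0$. Its row span equals $(0,\star)$ exactly when $x = z = 0$, is a line other than $(0,\star)$ when $z = 0 \neq x$, and is all of $\AA^2$ when $z \neq 0$; so $C^G_{\id}\cap U^G_{w_1} = \varnothing$, while $C^G_{w_1}\cap U^G_{w_1} = \{x = z = 0\}$. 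Since $C^G_{w_1}$ is irreducible of dimension $1$, this forces $C^G_{w_1} = \{x = z = 0\}$: a closed line inside the big cell $U^G_{w_1}$, whose complement in $X^G_{w_1} = C^G_{\id}\sqcup C^G_{w_1}$ is the single point $C^G_{\id}$, lying outside $U^G_{w_1}$. And $Y^G_{w_1}$ (the union of the cells $\geq w_1$) is the complement of the closed point $X^G_{\id} = C^G_{\id}$.

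Next I would pass to tubes. Since $X^G_{w_1,\Fp}$ is closed, the relation $\sp^{-1}(Z) = \overline{]Z[}$ for closed $Z$ recalled above gives $\tZ^G_0 = \sp^{-1}(X^G_{w_1,\Fp}) = \sp^{-1}(C^G_{w_1,\Fp})\sqcup\sp^{-1}(C^G_{\id,\Fp})$, the union being disjoint as distinct Bruhat cells are; and since $Y^G_{w_1,\Fp}$ is open, $\tU^G_0 = \;]Y^G_{w_1,\Fp}[\; = \sp^{-1}(Y^G_{w_1,\Fp}) = \tFL^G\setminus\sp^{-1}(C^G_{\id,\Fp})$. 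Hence $\tI^G_{0,0} = \tZ^G_0\cap\tU^G_0 = \sp^{-1}(C^G_{w_1,\Fp})$, the ``partial closure'' of the cell anticipated earlier; in particular, as $C^G_{w_1,\Fp}\subseteq U^G_{w_1,\Fp}$, this already shows $\tI^G_{0,0}$ lies in the chart. It remains to compute $\sp^{-1}(C^G_{w_1,\Fp})$ in coordinates: $C^G_{w_1,\Fp} = \{\bar x = \bar z = 0\}$ is closed in $U^G_{w_1,\Fp}\cong\AA^3_{\Fp}$, while $\sp^{-1}(U^G_{w_1,\Fp})$ is the closed unit polydisc $\{x,y,z\in\cB_0\}$; the reduction map pulls $\{\bar x = \bar z = 0\}$ back to $\{\,|x| < 1,\ |z| < 1,\ |y|\leq 1\,\}$, which — once one keeps track of the rank-$>1$ points in the fibre — is $\{\,x,z\in\overline{\cB}_0^\circ,\ y\in\cB_0\,\}$. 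For $H$ the argument degenerates: $X^H_{\id} = C^H_{\id}$ is a single closed point, the origin of the chart $U^H_{\id}\cong\AA^2$, so $\tZ^H_0 = \sp^{-1}(C^H_{\id,\Fp}) = \{\,z_1,z_2\in\overline{\cB}_0^\circ\,\}$; and since $\tU^H_0 = \tFL^H$, we get $\tI^H_{0,0} = \tZ^H_0$.

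The only step that needs real care is this last coordinate computation: one has to be precise about which rank-$>1$ boundary points lie in which fibre, as this is exactly what produces the asymmetry in the answer — the ``open disc, closed in the adic topology'' $\overline{\cB}_0^\circ$ in the $x$- and $z$-directions, transverse to $C^G_{w_1}$ and forced in by taking $\sp^{-1}$ of a closed condition, versus the ``closed disc, open in the adic topology'' $\cB_0$ in the $y$-direction, along $C^G_{w_1}$, where the rank-$>1$ points with $|y|$ infinitesimally larger than $1$ reduce onto $C^G_{\id,\Fp}$ and are therefore deleted on intersecting with $\tU^G_0$. Everything else is the elementary linear algebra of the first paragraph together with the formalism recalled from \cite{boxerpilloni20}.
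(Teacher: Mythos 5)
Your proposal is correct, and it takes a genuinely different (more self-contained) route than the paper. The paper's proof is a one-line citation: ``This is an instance of \bp{Lemma 3.21 (5)}.'' You instead derive the result from scratch: identifying the Bruhat strata inside the chart $U^G_{w_1}$ via the row-span criterion on the lower-left $2\times 2$ block (your computation $\stbt{x}{-1}{z}{0}$ is correct), then rewriting $\tZ^G_0$, $\tU^G_0$, $\tI^G_{0,0}$ as preimages under $\sp$ of locally closed sets in the special fibre, and finally translating $\sp^{-1}(C^G_{w_1,\Fp})$ into the $\overline{\cB}^\circ_0\times\cB_0\times\overline{\cB}^\circ_0$ description. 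The upshot is a transparent verification of what Boxer--Pilloni's lemma packages in general, and your closing paragraph correctly pins down the only subtlety, namely the asymmetric open/closed behaviour in the transverse directions $(x,z)$ versus the tangential direction $y$.

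One step is compressed and worth making explicit. From $C^G_{w_1}\cap U^G_{w_1}=\{x=z=0\}$ together with ``$C^G_{w_1}$ irreducible of dimension $1$'' you write that this ``forces $C^G_{w_1}=\{x=z=0\}$''. The dimension count alone only shows the intersection is dense in $C^G_{w_1}$; to conclude equality you need $C^G_{w_1}\subset U^G_{w_1}$. This does hold — for any Kostant representative $w\in{}^M W_G$ one has $P_G w B_G\subset P_G\,\bar N_{P_G}\,w$, because $w^{-1}\Phi^+_{M}\subset\Phi^+$ gives the factorisation $N_{B_G}=(N_{B_G}\cap w^{-1}P_G w)\,(N_{B_G}\cap w^{-1}\bar N_{P_G} w)$ — but it is not automatic from what you wrote. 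Alternatively, once $C^G_{\id}\notin U^G_{w_1}$ is established (as you do), note $X^G_{w_1}$ is an irreducible projective curve whose intersection with the affine open $U^G_{w_1}$ is an $\AA^1$, so $X^G_{w_1}\cong\mathbf{P}^1$ and the complement $X^G_{w_1}\setminus U^G_{w_1}$ is exactly one point, necessarily $C^G_{\id}$. Either patch is short; with it inserted, the argument is complete and correct, and gives an instructive unwinding of the cited lemma.
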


  \begin{proof}
   This is an instance of \bp{Lemma 3.21 (5)}.
  \end{proof}

  \begin{proposition}
   \label{prop:Cartadic}
   We have a Cartesian diagram of adic spaces
   \[
    \begin{tikzcd}
     \tZ^H_0 = \tI^H_{0, 0} \dar["\hat\iota"] \rar[hook] &\tU^H_0 \dar["\hat\iota"]\\
     \tI^G_{0,0} \rar[hook] & \tU^G_0
    \end{tikzcd}
   \]
   in which all the morphisms are closed embeddings.
  \end{proposition}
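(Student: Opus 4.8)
The plan is to deduce the statement from the previous two propositions by a purely formal argument about specialisation maps and fibre products of adic spaces. First I would recall that for an analytic adic space $\cX$ attached to a finite-type $\Zp$-scheme $X$, the specialisation map $\sp\colon\cX\to X_{\Fp}$ is functorial in $X$: the closed immersion $\hat\iota\colon\FL^H\to\FL^G$ induces a commutative square between the two specialisation maps. Since $\hat\iota$ maps $\id_H$ to $w_1$ (as noted after the definition of $\hat\gamma$), and since $\hat\iota^{-1}(C^G_{w_1})=C^H_{\id}$ and $\hat\iota^{-1}(C^G_{\id})=\varnothing$ by \cref{prop:intersectcells}, one gets $\hat\iota^{-1}(Y^G_{w_1,\Fp})=\FL^H_{\Fp}$ and $\hat\iota^{-1}(X^G_{w_1,\Fp})=\{\id_H\}=X^H_{\id,\Fp}$, working on the level of underlying topological spaces of the special fibres. (Here I use $X^G_{w_1}=C^G_{\id}\cup C^G_{w_1}$, whose preimage is therefore $C^H_{\id}=X^H_{\id}$.)

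Next I would translate these scheme-theoretic statements into statements about tubes. Because $\hat\iota$ is a closed immersion, $\sp^{-1}\hat\iota(\cdot)$ commutes with $\hat\iota^{-1}$ on preimages, and the interior operation is compatible with pullback along the (proper, hence closed and open on the relevant loci) map $\hat\iota$ of adic spaces. Concretely: $\hat\iota^{-1}(\tU^G_0)=\hat\iota^{-1}\,]Y^G_{w_1,\Fp}[\,=\,]\hat\iota^{-1}Y^G_{w_1,\Fp}[\,=\,]\FL^H_{\Fp}[\,=\tFL^H=\tU^H_0$, and similarly $\hat\iota^{-1}(\tZ^G_0)=\hat\iota^{-1}\,\overline{]X^G_{w_1,\Fp}[}\,=\,\overline{]\{\id_H\}[}=\tZ^H_0$. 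Intersecting, $\hat\iota^{-1}(\tI^G_{0,0})=\tZ^H_0\cap\tU^H_0=\tZ^H_0=\tI^H_{0,0}$, which is exactly the Cartesian property of the square (the fibre product of $\tU^H_0\to\tU^G_0\leftarrow\tI^G_{0,0}$ is computed on underlying spaces as this preimage, and the structure-sheaf condition is automatic since $\tI^G_{0,0}\hookrightarrow\tU^G_0$ is a locally closed immersion cut out, after passing to the open $\tU^G_0$, as a closed immersion). That all four maps are closed embeddings is then immediate: $\hat\iota$ is a closed immersion of adic spaces by construction, and $\tI^H_{0,0}\hookrightarrow\tU^H_0$, $\tI^G_{0,0}\hookrightarrow\tU^G_0$ are closed immersions since within the \emph{open} subspaces $\tU^H_0,\tU^G_0$ the sets $\tI^H_{0,0},\tI^G_{0,0}$ are closed (as $\tZ^H_0$, resp. $\tZ^G_0\cap\tU^G_0$, are closed in $\tFL^H$, resp. in $\tU^G_0$).

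Alternatively — and this is probably the cleanest write-up — I would verify the identity $\hat\iota^{-1}(\tI^G_{0,0})=\tI^H_{0,0}$ by a direct computation in the explicit polydisc coordinates furnished by the preceding proposition: $\tI^G_{0,0}=\{\stbt xyzx: x,z\in\overline{\cB}^\circ_0,\ y\in\cB_0\}$ and $\tI^H_{0,0}=\{(z_1,z_2):z_i\in\overline{\cB}^\circ_0\}$, while by \eqref{eq:gamma-param} the map $\hat\iota$ on big cells is $(z_1,z_2)\mapsto\stbt{z_2}{z_2}{z_1+z_2}{z_2}$. Then $\hat\iota(z_1,z_2)\in\tI^G_{0,0}$ forces $z_2=x\in\overline{\cB}^\circ_0$ (from the diagonal entry) and $z_1+z_2=z\in\overline{\cB}^\circ_0$; since $\overline{\cB}^\circ_0$ is a subgroup under addition, these two conditions together are equivalent to $z_1,z_2\in\overline{\cB}^\circ_0$, i.e.\ to $(z_1,z_2)\in\tI^H_{0,0}$; the remaining condition $y=z_2\in\cB_0$ is then automatic since $\overline{\cB}^\circ_0\subset\cB_0$. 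This gives the equality of underlying sets, hence (all immersions being closed) the Cartesian diagram. The only mild subtlety, and the one place I would be most careful, is the interaction of the closure operation $\overline{(\cdot)}$ with pullback along $\hat\iota$: one must check that $\hat\iota^{-1}(\overline{]X^G_{w_1,\Fp}[})=\overline{]X^H_{\id,\Fp}[}$ rather than merely $\supseteq$ or $\subseteq$. This follows because $\hat\iota$ is a closed immersion (so $\hat\iota^{-1}$ of a closed set is closed, giving $\supseteq$ after taking closures) together with the fact that $]X^H_{\id,\Fp}[$ is dense in $\hat\iota^{-1}(\tZ^G_0)$ — equivalently, that no rank-$>1$ boundary point of $\tZ^G_0$ lying in $\hat\iota(\tFL^H)$ fails to be a limit of classical points of $\tZ^H_0$ — which one reads off directly from the coordinate description, the closed-immersion property of $\hat\iota$, and the topological characterisation of $\overline{\cB}^\circ_0$ as the closure of $\cB^\circ_0$. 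With this point settled the proposition follows formally.
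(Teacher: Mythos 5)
Your proposal is correct and follows essentially the same route as the paper (apply \cref{prop:intersectcells} together with the definitions of the tubes); the paper's proof is just a one-line pointer, and you have supplied the detail. One small remark: the ``mild subtlety'' you flag about the interaction of $\overline{(\cdot)}$ with $\hat\iota^{-1}$ is in fact handled automatically by the identity stated earlier in the section, namely that for a \emph{closed} subset $Z$ of the special fibre one has $\sp^{-1}(Z) = \overline{]Z[}$. Thus $\tZ^G_0 = \sp^{-1}(X^G_{w_1,\Fp})$ and $\tZ^H_0 = \sp^{-1}(X^H_{\id,\Fp})$ on the nose, and the whole computation reduces to functoriality of $\sp$ applied to the set-theoretic preimages of $Y^G_{w_1,\Fp}$ (open) and $X^G_{w_1,\Fp}$ (closed) computed in \cref{prop:intersectcells} --- no separate analysis of interior or closure operators is needed. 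In the same vein, the parenthetical ``(proper, hence closed and open on the relevant loci)'' is a misstatement: a closed immersion of adic spaces is proper and closed but not open, and the argument does not actually rely on any openness of $\hat\iota$. Your alternative verification in the explicit polydisc coordinates is a clean and correct fallback, and resolves the (spurious) worry in any case.
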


   \begin{proof}
    This follows readily from \cref{prop:intersectcells} and the definition of the $\tU$'s, $\tI$'s and $\tZ$'s.
   \end{proof}

 \subsection{Tubes of smaller radius}

  Let $m, n, t$ be integers with
  \begin{equation}\label{eq:mnt}
   0 \le n \le m < t, \qquad \text{$m > n$ if $n \ne 0$}.
  \end{equation}

  \begin{definition}
   We define subsets $\tI_{m, n}^G \subset \tU_n^G$ in $\tFL^G$ as follows: we let
   \[ \tI_{m, n}^G = \{ \stbt x y z x : x, z \in \overline{\cB}_{m}^\circ, y \in \cB_n + \Zp \}. \]
   (consistently with the $(m, n) = (0, 0)$ case described above). For $n \ge 1$ we set
   \[ \tU_n^G =\{ \stbt x y z x : x, z \in \cB_n^\circ, y \in \cB_n + \Zp \}.\]
   and for $n = 0$ we use the definition above.
  \end{definition}

  \begin{proposition} \
   \begin{enumerate}[(i)]
   \item The sets $\tU_n^G$ and $\tU_n^G - \tI_{m, n}^G$ are open in $\tFL^G$.
   \item If $n > 0$ then $\tI_{m, n}^G = \tI_{m, 0}^G \cap \tU_n^G$.
   \item The sets $\tU_n^G$ and $\tI_{m, n}^G$ are stable under $K_{G, \Iw}(p^t)$, and in the notation of \bp{\S 3.3.3}, we have
   \[ \tU_n^G =\ ]C_{w_1, \Fp}[_{(n, n)}K_{G, \Iw}(p^t),\qquad \tI_{m, n} =\ ]C_{w_1, \Fp}[_{(\overline{m}, n)}K_{\Iw}^G(p^t).\]
   \end{enumerate}
  \end{proposition}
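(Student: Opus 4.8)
The plan is to dispose of (i) and (ii) by direct inspection of the four flavours of disc, and to reduce (iii) to the general formalism of \bp{\S 3.3.3} after an explicit coordinate computation; essentially all the work lies in (iii).

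For (i): when $n \ge 1$ the set $\tU_n^G$ is cut out inside the open chart $U^G_{w_1}$ by $x, z \in \cB_n^\circ$ and $y \in \cB_n + \Zp$, and both conditions are open ($\cB_n^\circ = \bigcup_{m' > n}\cB_{m'}$ is an increasing union of adically open sets, and $\cB_n + \Zp$ is a finite union of translates of the open set $\cB_n$), so $\tU_n^G$ is open; for $n = 0$, $\tU_0^G =\ ]Y^G_{w_1, \Fp}[$ is the tube of an open subscheme, hence open. To see $\tU_n^G - \tI_{m,n}^G$ is open, note that since $m \ge n$ the only conditions distinguishing $\tI_{m,n}^G$ from $\tU_n^G$ are $x, z \in \overline{\cB}_m^\circ = \{|.| < |p|^m\}$ (together, when $n = 0$, with membership in the closed set $\tZ^G_0$), and $\overline{\cB}_m^\circ$ is adically closed, so $\tI_{m,n}^G$ is closed in $\tU_n^G$. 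For (ii): when $n \ge 1$, condition \eqref{eq:mnt} forces $m \ge n + 1$, so $\overline{\cB}_m^\circ \subseteq \cB_{n+1} \subseteq \cB_n^\circ$ and $\cB_n + \Zp \subseteq \cB_0 + \Zp$; intersecting the inequalities defining $\tI_{m,0}^G$ with those defining $\tU_n^G$ therefore returns exactly the inequalities defining $\tI_{m,n}^G$, and since $\tI_{m,0}^G \subseteq \tI_{0,0}^G \subseteq U^G_{w_1}$ the intersection is formed inside a single coordinate chart.

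For (iii) I would first write a general $g \in K^G_{\Iw}(p^t)$ in block form $w_1 g w_1^{-1} = \stbt{A}{B}{C}{D}$ (with $2 \times 2$ blocks) and record the congruences forced mod $p^t$ by $g \bmod p^t \in B_G$: a short calculation with the element $w_1 = s_1$ shows that $A$, $B$, $D$ are upper-triangular mod $p^t$ with unit diagonals on $A$ and $D$, while $C \equiv \stbt{0}{\star}{0}{0} \bmod p^t$. I would then feed this into the action formula $Z \mapsto (ZB + D)^{-1}(ZA + C)$ of \cref{sect:explicitparam}. The points to check are: $ZB + D$ is upper-triangular with unit diagonal modulo topologically nilpotents and $p^t$, hence lies in $\GL_2(\Zp)$; and, using $n < t$ so that the $p^t$-errors are smaller than the disc radii, the new diagonal and anti-diagonal coordinates $x', z'$ remain in $\cB_n^\circ$ (resp. $\overline{\cB}_m^\circ$) whenever $x, z$ do, while the off-diagonal coordinate $y'$ again lies in $\cB_n + \Zp$ — which is precisely why the ``$+\Zp$'' appears in the definitions. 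This gives stability under $K^G_{\Iw}(p^t)$, and since $\tU_n^G$ and $\tI_{m,n}^G$ are then visibly swept out from the distinguished radius-$(n,n)$ (resp. radius-$(\overline{m}, n)$) sub-disc around $w_1$ under this action, they coincide with the tubes $]C_{w_1, \Fp}[_{(n,n)} K^G_{\Iw}(p^t)$ and $]C_{w_1, \Fp}[_{(\overline{m}, n)} K^G_{\Iw}(p^t)$ of \bp{\S 3.3.3}; the $(m,n) = (0,0)$ case is precisely \bp{Lemma 3.21 (5)}, already used above, and the general case is the same computation with the two radii carried along.

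The main obstacle will be the coordinate bookkeeping in (iii): tracking the Iwahori action on all three coordinates carefully enough to confirm that each of the four flavours of disc is respected with exactly the stated radii, and that the ``open'' and ``closed'' discs behave correctly. This is a finite explicit $2 \times 2$ matrix calculation, close in spirit to the $(0,0)$ case, and ultimately an instance of \bp{Lemma 3.21}; the one genuine subtlety is keeping straight the (opposite) conventions for Borel subgroups and parabolics relative to \cite{boxerpilloni20}, as flagged in the comparison discussion above.
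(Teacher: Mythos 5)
Your proposal is correct in substance but takes a genuinely different route from the paper for part (iii). Where you propose to verify stability by writing out $w_1 g w_1^{-1} = \stbt{A}{B}{C}{D}$ and pushing the disc radii through the action formula $Z \mapsto (ZB+D)^{-1}(ZA+C)$, the paper instead appeals directly to the structural result \bp{Lemma 3.18}, which expresses $]C_{w_1, \Fp}[_{(n, n)} K^G_{\Iw}(p^t)$ as a finite union of translates of $]C_{w_1, \Fp}[_{(n, n)}$ indexed by coset representatives for $N_{B_G}(\ZZ/p^n)$ modulo its intersection with $w_1^{-1}P_G w_1$; one then computes that these coset representatives can be taken of the form $\begin{smatrix} 1 \\ &1 & \alpha \\ &&1 \\ &&&1\end{smatrix}$ with $\alpha \in \ZZ/p^n$, acting by $y \mapsto y + \alpha$ — which is exactly where the ``$+\Zp$'' in your definitions comes from. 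The paper's route buys you both inclusions simultaneously and keeps the bookkeeping inside \cite{boxerpilloni20}; your route re-derives the essential content of their Lemma 3.18 by hand.

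A small but genuine point to flag: your matrix calculation, as described, would establish stability of $\tU^G_n$ (and hence the inclusion $]C_{w_1, \Fp}[_{(n,n)} K^G_{\Iw}(p^t) \subseteq \tU^G_n$), but the reverse inclusion is what you are packaging into the phrase ``visibly swept out.'' Making that precise is exactly the coset-representative computation of the paper's proof: you need to check that every point with $y \in \cB_n + \Zp$ can be reached from the distinguished $(n,n)$-disc (where $y \in \cB_n$) by a translate in $K^G_{\Iw}(p^t)$ — the elements $y \mapsto y + \alpha$ above — and that no further translates escape the explicit set. You clearly understand this mechanism (you note it correctly in explaining the role of ``$+\Zp$''), but in a written-up version the converse inclusion should be made explicit rather than left as ``visibly.'' Your treatment of (i) and (ii) is fine and simply supplies the details the paper declares obvious; the block-congruence calculation you sketch for $A, B, C, D$ is correct.
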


  \begin{proof}
   The first two statements are obvious. For the stability under $K_{G, \Iw}(p^t)$, we treat $n = 0$ and $n > 0$ separately: in the $n = 0$ case, the stability of $\tU_0^G$ is already established, and the stability of $\tI_{m, n}$ follows from the identification $\tI_{m, 0} = \, ]C_{w_1, \Fp}\![\,_{\overline{m}, 0} = \cP\backslash \cP w \cG_{\overline{m}, 0}$ of \bp{\S 3.3.3}.

   For $n > 0$, we argue as in Lemma 3.18 of \emph{op.cit.} to describe the spaces $]C_{w_1, \Fp}[_{(n, n)} K_{\Iw}^G(p^t)$ and $]C_{w_1, \Fp}[_{(\overline{m}, n)}K_{\Iw}^G(p^t)$ as finite unions of translates of $]C_{w_1, \Fp}[_{(n, n)}$, indexed by coset representatives for $N_{B_G}(\ZZ/p^n)$ modulo its intersection with $w^{-1}P_G w$. We can take these coset representatives to be of the form
   \[ \begin{smatrix} 1 \\ &1 & \alpha \\ &&1 \\ &&&1\end{smatrix}, \qquad \alpha \in \ZZ / p^n, \]
   which act as $\stbt x y z x \mapsto \stbt x {y + \alpha} z x$.
  \end{proof}

  \begin{remark}
   One can choose a closed subset $\tZ_m^G$ such that $\tI_{m, n}^G = \tZ_m^G \cap \tU_n^G$; however, it is a little awkward to choose such a subset which is invariant under $K^G_{\Iw}(p^t)$, and in any case the choice of such a subset will not matter for our theory, so we shall not privilege any particular choice.
  \end{remark}

  \begin{remark}
   Formally setting $n = 0$ in the formula defining $\tU_n^G$ for $n > 0$ gives a well-defined and $K^G_{\Iw}(p)$-invariant set (in fact it is exactly $]C_{w_1}[\,$), but this set does not contain $\tI_{0, 0}$. Hence we use the formula of the previous section in which $x, z$ are allowed to ``go off to infinity''.
  \end{remark}

 \subsection{Iwahori-level tubes for $H$}

  For $n < t$ we shall define
  \[ \tU_n^H = \hat\iota^{-1}(\tU_n^G) \subseteq \tFL^H.\]
  This is an open $K_{H, \Iw}(p^t)$-invariant set, containing $\{\id\}$. For $n = 0$ it is the whole of $\tFL^H$.

  \begin{proposition}
   The set $\hat\iota^{-1}(\tI_{m, n}^G)$ is closed in $\tFL_H$ and invariant under $K_{H, \Iw}(p^t)$, and does not depend on $n$; explicitly it is given by
   \[ \tZ_m^H = \{ (z_1, z_2): z_i \in \overline{\cB}^\circ_m\}.\]
  \end{proposition}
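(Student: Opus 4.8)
The plan is to compute $\hat\iota^{-1}(\tI^G_{m,n})$ directly in the explicit coordinates set up in \cref{sect:explicitparam}. Recall from \eqref{eq:gamma-param} that in the coordinates $(z_1,z_2)$ on $U^H_{\id}$ and $\stbt xyzx$ on $U^G_{w_1}$, the map $\hat\iota$ is given by $(z_1,z_2)\mapsto \stbt{z_2}{z_2}{z_1+z_2}{z_2}$. So a point of $\tFL^H$ lying in $U^H_{\id}$ maps into $\tI^G_{m,n}=\{\stbt xyzx: x,z\in\overline{\cB}^\circ_m,\ y\in\cB_n+\Zp\}$ if and only if $z_2\in\overline{\cB}^\circ_m$ (from the $x$-entry), $z_1+z_2\in\overline{\cB}^\circ_m$ (from the $z$-entry), and $z_2\in\cB_n+\Zp$ (from the $y$-entry). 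Since $m\ge n$ and $m\ge 1$ whenever $n\ge 1$ by \eqref{eq:mnt}, the condition $z_2\in\overline{\cB}^\circ_m$ already implies $z_2\in\cB_n+\Zp$ (indeed $\overline{\cB}^\circ_m\subset\cB_m\subset\cB_n\subset\cB_n+\Zp$ in the relevant cases, and for $n=0$ one uses $\overline{\cB}^\circ_m\subset\overline{\cB}^\circ_0\subset\cB_0+\Zp$), so the $y$-constraint is redundant. The conditions $z_2\in\overline{\cB}^\circ_m$ and $z_1+z_2\in\overline{\cB}^\circ_m$ together are visibly equivalent to $z_1,z_2\in\overline{\cB}^\circ_m$, since $\overline{\cB}^\circ_m$ is a subgroup of the additive group. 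This gives the asserted formula $\tZ^H_m=\{(z_1,z_2): z_i\in\overline{\cB}^\circ_m\}$, and in particular shows the answer does not depend on $n$.

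First, though, I would check that $\hat\iota^{-1}(\tI^G_{m,n})$ is entirely contained in the big cell $U^H_{\id}$, so that the coordinate computation above is exhaustive. This follows because $\hat\iota(\tFL^H)$ meets $Y^G_{w_1}$ only inside $C^G_{w_1}\cup C^G_{w_2}\cup C^G_{w_3}$, and $\tI^G_{m,n}\subset\tU^G_n\subset\ ]Y^G_{w_1,\Fp}[\,$; combined with \cref{prop:intersectcells}, which identifies $\hat\iota^{-1}(C^G_{w_1})$ with $C^H_{\id}$, one sees that the relevant locus in $\tFL^H$ specialises into the open cell $U^H_{\id}$ — alternatively, one simply observes that $\overline{\cB}^\circ_m$ consists of points specialising to $0$, so the computed set is literally a subset of $]C^H_{\id}[\,\subset U^H_{\id}$. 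Next I would record that $\tZ^H_m$ as described is closed in $\tFL^H$: it is a product of two copies of $\overline{\cB}^\circ_m=\{|z|<|p|^m\}$, which is a closed (though not open) subset of the adic affine line, and the big cell $U^H_{\id}$ is open in $\tFL^H$ with $\tZ^H_m$ contained in the locus specialising to $\{\id\}$, which is closed; so $\tZ^H_m$ is closed in $\tFL^H$ as well. Finally, $K_{H,\Iw}(p^t)$-invariance follows either from the corresponding invariance of $\tI^G_{m,n}$ under $K_{G,\Iw}(p^t)$ together with $K^H_{\ds}(p^t)=H(\Qp)\cap\hat\gamma K^G_{\Iw}(p^t)\hat\gamma^{-1}\subseteq K^H_{\Iw}(p^t)$ and the fact that $\hat\iota$ intertwines the $H$- and $G$-actions appropriately, or more directly by noting that $K_{H,\Iw}(p^t)$ acts on the coordinates $(z_1,z_2)$ through transformations preserving the polydisc $\overline{\cB}^\circ_m\times\overline{\cB}^\circ_m$ (lower-triangular-mod-$p^t$ elements act by $z_i\mapsto (c_iz_i+d_i)^{-1}(a_iz_i+b_i)$ with $a_i,d_i$ units, $b_i\in p^t\Zp$, $c_i\in p^t\Zp$, which preserves $\{|z_i|<|p|^m\}$ since $m<t$).

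The only mild subtlety — and the step I would be most careful about — is the bookkeeping around the $y$-entry and the ``$+\Zp$'' in the definition of $\tI^G_{m,n}$: one must verify that the constraint $y=z_2\in\cB_n+\Zp$ pulled back along $\hat\iota$ imposes nothing beyond what $z_2\in\overline{\cB}^\circ_m$ already gives, in each of the cases $n=0$ and $n\ge 1$ permitted by \eqref{eq:mnt}. This is where the hypothesis $m>n$ for $n\ne 0$ is used: it guarantees $\overline{\cB}^\circ_m\subseteq\cB_m\subseteq\cB_n$, hence $z_2\in\cB_n\subseteq\cB_n+\Zp$; for $n=0$ one has $\overline{\cB}^\circ_m\subseteq\overline{\cB}^\circ_0\subseteq\cB_0\subseteq\cB_0+\Zp$. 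Everything else is a short unwinding of definitions, so I expect no serious obstacle; the main content is really \cref{prop:intersectcells} and the coordinate formula \eqref{eq:gamma-param}, which are already in hand.
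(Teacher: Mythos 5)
Your argument is correct in outcome and follows the same route as the paper's own (one-line) proof, which is ``Clear from \eqref{eq:gamma-param}''; you are simply writing out the coordinate computation that the authors treat as immediate. The reduction to the big cell $U^H_{\id}$ via \cref{prop:intersectcells}, the redundancy of the $y$-constraint (using $m \ge n$, and $m > n$ when $n \ge 1$), the subgroup trick for the pair $(z_2, z_1 + z_2)$, and the closedness of $\tZ^H_m$ are all correct. Two bookkeeping points in the $K_{H,\Iw}(p^t)$-invariance step should be fixed, though neither is a real gap since you also give a direct argument.

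First, the ``alternative'' route via $K^G_{\Iw}(p^t)$-invariance of $\tI^G_{m,n}$ cannot give invariance under the full $K_{H,\Iw}(p^t)$: the map $\hat\iota$ intertwines the right $H$-action on $\tFL^H$ with the action of $\hat\gamma^{-1} H \hat\gamma$ on $\tFL^G$, so invariance of $\tI^G_{m,n}$ under $K_{G,\Iw}(p^t)$ only yields invariance of the preimage under $H(\Qp) \cap \hat\gamma K_{G,\Iw}(p^t)\hat\gamma^{-1} = K^H_{\ds}(p^t)$, which is the \emph{smaller} group. The inclusion $K^H_{\ds}(p^t) \subseteq K^H_{\Iw}(p^t)$ goes the wrong way for this deduction, so the direct argument is not optional.

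Second, in the direct argument, with the parametrisation $(z_1, z_2) \mapsto B_H \backslash B_H\bigl(\stbt{1}{}{z_1}{1}, \stbt{1}{}{z_2}{1}\bigr)$ and the right-translation action, the matrix $\stbt a b c d$ acts by $z \mapsto (bz+d)^{-1}(az+c)$, not $(cz+d)^{-1}(az+b)$; and the (upper-triangular) Iwahori $K_{H,\Iw}(p^t)$ imposes only $c \in p^t\Zp$, with $a, d \in \Zp^\times$ and $b \in \Zp$ arbitrary. Fortunately this is exactly what the estimate needs: if $|z| < |p|^m$ with $0 \le m < t$, then $|az + c| < |p|^m$ (since $|a| = 1$ and $|c| \le |p|^t < |p|^m$) and $|bz + d| = 1$ (since $|d| = 1 > |bz|$), so $|z'| < |p|^m$ and the polydisc is preserved. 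With those two corrections the proof is clean.
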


  \begin{proof}
   Clear from \eqref{eq:gamma-param}.
  \end{proof}

  We therefore have a Cartesian diagram of closed embeddings generalising \cref{prop:Cartadic} above, for any $m, n, t$ as in \eqref{eq:mnt}:
  \begin{equation}\label{eq:Cartadic2}
   \begin{tikzcd}
    \tZ^H_m \dar["\hat\iota"] \rar[hook] &\tU^H_n \dar["\hat\iota"]\\
    \tI^G_{m,n} \rar[hook] & \tU^G_n.
   \end{tikzcd}
  \end{equation}

  It is convenient to extend the definition of $\tZ_m$ to remove the requirement that $m < t$. For any integers $t \ge 1$ and $m \ge 0$, let us define
  \[ \tZ_m^H \coloneqq \{ (z_1, z_2): z_i \in \overline{\cB}^\circ_m + p^t \Zp\}.\]
  This is invariant under $K_{H, \Iw}(p^t)$, and hence also under $K_{H, \ds}(p^t)$. If $m < t$ then this agrees with the definitions above. We define an open set $\tU_n^H$ similarly. (However, in the $m \ge t$ case we do not know if these sets $\tU_n^H$ and $\tZ_m^H$ can be fitted into a diagram like \eqref{eq:Cartadic2}.)


\section{Pullbacks in overconvergent cohomology}
 \subsection{Adic spaces and period maps}

  We consider the analytifications $\cS_{G,K}^{\an}=(S_K\times \Spec(\Qp))^{\an}$,  $\cS_{G,K}^{\tor}=(S_{G,K}^{\tor}\times\Spec(\Qp))^{\an}$ and $\tFL_G=(\FL_{G}\times\Spec(\Qp))^{\an}$, as well as the groups $\cG^{\an}=(G\times \Spec(\Qp))^{\an}$, $\cP^{\an}_{G}=(P_{G}\times \Spec(\Qp))^{\an}$ and $\cM_G^{\an}=(M_G\times \Spec(\Qp))^{\an}$.

  Since we have fixed an integral model of $G$, we have quasi-compact, affinoid groups $\cG\hookrightarrow \cG^{\an}$, $\cP_{G,\Sieg}\hookrightarrow \cP_{G,\Sieg}^{\an}$ and $\cM_G\hookrightarrow \cM_G^{\an}$.

  Write $\cS_{G,K^p}^{\tor}$ for the perfectoid space $\varprojlim_{K_p} \cS_{G,K^pK_p}^{\tor}$. We can then consider the Hodge--Tate period map
  \[ \pi_{\HT,G}^{\tor}: \cS_{G,K^p}^{\tor} \longrightarrow \tFL_G\]
  which for every open compact $K_p\subset G(\Qp)$ descends to a map of topological spaces (c.f. \cite[\S 4.5]{boxerpilloni20})
  \[ \pi_{\HT,G,K_p}^{\tor}: \cS_{G,K^pK_p}^{\tor}\longrightarrow \tFL_G/K_p.\]

 There is an analogous Hodge--Tate period map for $H$ also. To lighten the notation, we shall frequently omit many of the subscripts from $\pi_{\HT,G,K_p}^{\tor}$ when they are clear from context (in particular, we shall almost always omit the ``$\tor$'', since the non-compactified Shimura variety plays no role here).

  We shall attempt to consistently maintain the convention that subsets of flag varieties are denoted by typewriter letters $\tU$ etc, and the preimages of these spaces under the Hodge--Tate period maps are denoted by calligraphic letters $\cU$ etc.


 \subsection{Period maps}

  \begin{theorem}
   \label{thm:HTmaps}
   There is a commutative diagram of Hodge--Tate period maps (where we have omitted some unimportant subscripts and superscripts for clarity)
   \[
    \begin{tikzcd}
    \cS^{\tor}_{H,\Iw}(p^t) \rar["\pi^H_{\Iw}"]  & \tFL_H / K_{H, \Iw}(p^t)\\
    \cS^{\tor}_{H,\ds}(p^t) \rar["\pi^H_{\ds}"] \dar["\hat\iota" left] \uar["\pr_\ds" left] & \tFL_H / K_{H, \ds}(p^t) \dar["\hat\iota" right] \uar["\pr_{\ds}" right]\\
    \cS^{\tor}_{G,\Iw}(p^t) \rar["\pi^G_{\Iw}"] & \tFL_G / K_{G, \Iw}(p^t)
    \end{tikzcd}
   \]
   in which the maps $\pr_{\ds}$ are the natural quotients, and the downward ones are given by the composite of the natural embedding $H \into G$ and right-translation by $\hat\gamma = \gamma w_1$.
  \end{theorem}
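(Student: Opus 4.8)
The plan is to construct the diagram one square at a time, using the compatibility of Hodge--Tate period maps with change of level and with morphisms of Shimura data. First I would recall that for a fixed tame level, the perfectoid Shimura variety $\cS^{\tor}_{H,K^{H,p}}$ carries a single $H(\Qp)$-equivariant Hodge--Tate period map $\pi_{\HT,H}^{\tor}\colon \cS^{\tor}_{H,K^{H,p}}\to \tFL_H$, and likewise for $G$; the maps $\pi^H_{\Iw}$, $\pi^H_{\ds}$, $\pi^G_{\Iw}$ in the statement are simply the descents of these to the indicated finite levels. The two vertical maps labelled $\pr_{\ds}$ are then the canonical projections coming from the inclusion of level groups $K_{H,\ds}(p^t)\subseteq K_{H,\Iw}(p^t)$ (which is legitimate by the observation just before the statement that $K^H_{\ds}(p^t)$ is a subgroup of $K^H_{\Iw}(p^t)$), and the upper square commutes because $\pi_{\HT,H}^{\tor}$ is a single equivariant map and both composites are its descent to level $K_{H,\ds}(p^t)$, followed by the respective quotient maps to $\tFL_H/K_{H,\Iw}(p^t)$.

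For the lower square, the key point is the functoriality of the Hodge--Tate period map under the morphism of Shimura data attached to $H\into G$. The embedding $\iota\colon H\into G$ induces a morphism of perfectoid Shimura varieties $\cS^{\tor}_{H,K^{H,p}}\to \cS^{\tor}_{G,K^{G,p}}$ (using $K^{G,p}\cap H = K^{H,p}$), and compatibility of the Hodge structures — equivalently, of the cocharacters defining the Shimura data — gives a commutative square relating $\pi^H_{\HT}$ and $\pi^G_{\HT}$ via the natural map $\FL_H\to\FL_G$ induced by $H\into G$ on flag varieties. The twist by $\hat\gamma=\gamma w_1$ then enters: right-translation by $\hat\gamma$ is an automorphism of $\tFL_G$ (and, via the embedding, an isomorphism onto the sub-Shimura-variety's image), and on the level of Shimura varieties right-translation by an element of $G(\Qp)$ is an isomorphism $\cS^{\tor}_{G,K^{G,p}K_p}\to \cS^{\tor}_{G,K^{G,p}\hat\gamma^{-1}K_p\hat\gamma}$ intertwining the two Hodge--Tate maps up to the same translation on $\tFL_G$. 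Since $K^H_{\ds}(p^t)$ was defined precisely as $H(\Qp)\cap \hat\gamma K^G_{\Iw}(p^t)\hat\gamma^{-1}$, the composite $\hat\iota = (\text{embedding})\circ(\text{right mult.\ by }\hat\gamma)$ sends $\cS^{\tor}_{H,\ds}(p^t)$ into $\cS^{\tor}_{G,\Iw}(p^t)$ at the right level, and on flag varieties it is exactly the closed immersion $\hat\iota\colon \tFL_H\to\tFL_G$ of the previous section. Combining these two commutativities — functoriality for $H\into G$ and equivariance under right translation by $\hat\gamma$ — yields the commutativity of the lower square.

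Concretely, I would first state the equivariance of $\pi_{\HT}^{\tor}$ under the $G(\Qp)$-action (descended from the perfectoid tower), then record the Shimura-data functoriality giving the square for the bare inclusion $H\into G$ before any twist, then observe that post-composing the $H$-side with right multiplication by $\hat\gamma$ (which makes sense since $\hat\gamma\in G(\Qp)$ normalises nothing in particular but simply translates) corresponds on the flag-variety side to the same translation, and that this translated map is the $\hat\iota$ already studied. Finally I would check the level bookkeeping: the source of the translated map at level $K^G_{\Iw}(p^t)$ on the target forces level $\hat\gamma^{-1}K^G_{\Iw}(p^t)\hat\gamma\cap H(\Qp)=K^H_{\ds}(p^t)$ on the $H$-side, which is exactly the variety $\cS^{\tor}_{H,\ds}(p^t)$ appearing in the diagram. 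All of the squares then paste together into the asserted diagram.

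The main obstacle is not any one computation but assembling the right statement of Hodge--Tate functoriality: one needs that the morphism of flag varieties induced on the perfectoid level by $H\into G$ agrees, after the $\hat\gamma$-translation, with the algebraically-defined closed immersion $\hat\iota$ of the previous section — i.e.\ that the ``analytic'' $\hat\iota$ coming from period maps and the ``algebraic'' $\hat\iota$ coincide. This is essentially the statement that the Hodge--Tate filtration is compatible with the embedding of Shimura data, together with a careful matching of the cocharacter normalisations (the role of $\gamma\in P_{\Sieg}$, which is why $\hat\iota$ sends $\id_H$ to $w_1$ rather than to $\id_G$); once that identification is in hand, everything else is formal manipulation of equivariant maps and level subgroups, and one cites \cite[\S 4.5]{boxerpilloni20} for the descent of $\pi_{\HT}^{\tor}$ to finite level.
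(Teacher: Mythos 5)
Your proof is correct and takes essentially the same route as the paper: reduce to the perfectoid infinite level where the Hodge--Tate period maps are defined, invoke functoriality of $\pi_{\HT}$ under the embedding of Shimura data $H\into G$ (which, as you note and as the paper also emphasises, ultimately follows from the construction of $\pi_{\HT}$ via embedding into a Siegel Shimura variety), then descend to finite level and do the bookkeeping with $K^H_{\ds}(p^t)=H(\Qp)\cap\hat\gamma K^G_{\Iw}(p^t)\hat\gamma^{-1}$ for the $\hat\gamma$-twist. The paper states this in a single sentence citing \cite[\S 4.4.7]{boxerpilloni20}; you have merely unpacked the two squares and the level-matching, which is fine.
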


  \begin{proof}
   It suffices to check that the Hodge--Tate period maps for $H$ and for $G$ at perfectoid infinite level are compatible; but this is a direct consequence of the construction, since the Hodge--Tate period map for Hodge-type Shimura varieties is defined using an embedding into a Siegel Shimura variety. See \cite[\S 4.4.7]{boxerpilloni20}).
  \end{proof}

  For $(m, n, t)$ as in \eqref{eq:mnt}, we define subspaces
  \[ \cI^G_{m, n} \subset \cU^G_n \subset \cS_{G,\Iw}(p^t), \qquad \cZ^H_{m} \subset \cU^H_n \subset \cS_{H,\ds}(p^t)
  \]
  as the preimages of the subsets $\tI^G_{m, n} \subset \tU^G_n \subset \tFL^G$ under $\pi^G_{\Iw}$, respectively $\cZ^H_{m} \subset \cU^H_n \subset \tFL^H$ under $\pi^H_{\ds}$. Combining \cref{thm:HTmaps} and \cref{eq:Cartadic2}, we obtain a Cartesian diagram
  \begin{equation}\label{eq:adicSh}
   \begin{tikzcd}
    \cZ^{H}_m \dar \rar[hook] &\cU^H_n \dar\\
    \cI^G_{m, n} \rar[hook] & \cU^G_n
   \end{tikzcd}
  \end{equation}
  in which the horizontal arrows are closed embeddings; and $\cZ^{H}_m$ is closed in $\cS_{H,\ds}(p^t)$.


 \subsection{Overconvergent pullback}
  We can now define the pullback map on overconvergent cohomology. We give the definitions for the non-cuspidal cohomology, using the coefficient sheaf $\cV= \cV_\kappa$ for some $M_G$-dominant integral weight $\kappa$; the definitions are the same for cuspidal cohomology using $\cV= \cV_\kappa(-D)$ instead.

  Using the diagram \eqref{eq:adicSh} and the functoriality of cohomology with support, we get a map
  \begin{equation}
   \label{eq:iota1}
   \hat\iota^* : R\Gamma_{\cI^G_{mn}}(\cU_n^G, \cV) \longrightarrow R\Gamma_{\cZ_m^{H}}\left(\cU_n^H, \hat\iota^* \cV_{\kappa}\right) \cong R\Gamma_{\cZ_m^{H}}\left(\cS_{H,\ds}(p^t), \hat\iota^* \cV\right),
  \end{equation}
  where the final isomorphism comes from excision, using the fact that $\cZ_m^{H}$ is closed in $\cS_{H,\ds}(p^t)$.

  As in \bp{\S 5.4.1}, for any $t \ge 1$, we can define the finite-slope overconvergent cohomology for $G$ as
  \begin{align*}
    R\Gamma^G_{w_1}(\kappa)^{-, \fs} &\coloneqq R\Gamma_{\cI_{00}}(\cU^G_0, \cV_\kappa)^{-, \fs}.
  \end{align*}
  So \eqref{eq:iota1} for $(m, n) = (0,0)$ gives our first definition of the pullback map on overconvergent cohomology, taking values in $R\Gamma_{\cZ_0^{H}}\left(\cS_{H,\ds}(p^t), \hat\iota^* \cV\right)$.

  \begin{proposition}[Comparison with classical pullback]
   \label{prop:classicalcomp}
   We have the following commutative diagram:
   \[
    \begin{tikzcd}
     R\Gamma_{\cI_{00}^G}(\cU_0^G,\cV) \rar["\quad \hat\iota^*\quad" above, "\eqref{eq:iota1}" below] \dar["\cores"] & R\Gamma_{\cZ_0^{H}}\left(\cS^{\tor}_{H, \ds}(p^t), \hat\iota^* \cV\right) \dar["\cores"]\\
     R\Gamma(\cU_0^G,\cV) \rar{\quad \hat\iota^*\quad }  & R\Gamma\left(\cS^{\tor}_{H,\ds}(p^t),\hat\iota^*\cV\right)\\
        R\Gamma\left(\cS^{\tor}_{G,\Iw}(p^t),\cV\right)\uar["\res"] \arrow["\hat\iota^*" below]{ru}
    \end{tikzcd}
   \]
   in which the bottom horizontal map corresponds to the classical pushforward via the rigid-analytic GAGA theorem. Moreover, the spaces in the left column of the diagram have actions of the prime-to-$p$ Hecke algebra and the operators $\cU'_{\Sieg}$, $\cU'_{\Kl}$ at $p$, and the maps $\res$ and $\cores$ are compatible with these actions.
  \end{proposition}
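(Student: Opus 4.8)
The plan is to verify the two commutative triangles/squares making up the diagram separately, and then assemble them. First, the lower triangle. The overconvergent-to-classical restriction $\res$ on the $G$-side is, by definition in \bp{\S 5.4.1}, induced by the map of sites $\cU_0^G \hookrightarrow \cS^{\tor}_{G,\Iw}(p^t)$ together with the ``forget supports'' map; so the composite $R\Gamma(\cS^{\tor}_{G,\Iw}(p^t),\cV) \to R\Gamma(\cU_0^G,\cV)$ is simply pullback along the open immersion. Then the commutativity of the lower triangle is just the functoriality of pullback in coherent cohomology applied to the commutative square of adic spaces
\[
 \begin{tikzcd}
  \cS^{\tor}_{H,\ds}(p^t) \rar["\hat\iota"] & \cS^{\tor}_{G,\Iw}(p^t)\\
  \cU_0^H \rar["\hat\iota"] \uar[hook] & \cU_0^G \uar[hook]
 \end{tikzcd}
\]
coming from \cref{thm:HTmaps}, once we also invoke the excision isomorphism $R\Gamma(\cU_0^H, \hat\iota^*\cV) \cong R\Gamma(\cS^{\tor}_{H,\ds}(p^t), \hat\iota^*\cV)$ --- but here on the \emph{non}-support cohomology this ``excision'' is really just the statement that $\cU_0^H$ is all of $\cS^{\tor}_{H,\ds}(p^t)$ (since $\tU_0^H = \tFL^H$ by definition), so the identification is tautological.

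Next, the upper square. Both vertical maps are the ``forget supports'' maps $R\Gamma_{\cI_{00}^G}(\cU_0^G,\cV) \to R\Gamma(\cU_0^G,\cV)$ and $R\Gamma_{\cZ_0^H}(\cS^{\tor}_{H,\ds}(p^t), \hat\iota^*\cV) \to R\Gamma(\cS^{\tor}_{H,\ds}(p^t),\hat\iota^*\cV)$. The top horizontal map $\hat\iota^*$ is \eqref{eq:iota1}, which was itself built out of the functoriality of cohomology-with-supports along the Cartesian square \eqref{eq:adicSh}. The key point is that the formation of cohomology with supports is functorial in the pair $(\text{space}, \text{closed subset})$ and that this functoriality is compatible with the natural transformation to cohomology without supports; applying this to the morphism of pairs $(\cU_0^H, \cZ_0^H) \to (\cU_0^G, \cI_{00}^G)$ (i.e. $\hat\iota$ together with $\hat\iota^{-1}(\cI_{00}^G) = \cZ_0^H$, which holds by \eqref{eq:adicSh}) gives exactly the commutativity of the upper square. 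Composing the two pieces, and noting that by GAGA (rigid-analytic comparison, as in \bp{\S 5.1}) the bottom row $\hat\iota^*: R\Gamma(\cS^{\tor}_{G,\Iw}(p^t),\cV) \to R\Gamma(\cS^{\tor}_{H,\ds}(p^t),\hat\iota^*\cV)$ is the analytification of the algebraic pullback, yields the diagram.

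For the Hecke-equivariance: the prime-to-$p$ Hecke operators act on all objects in sight through correspondences away from $p$, which commute with the Hodge--Tate period maps and with the inclusions of the adic subsets (these subsets being defined purely in terms of the flag variety at $p$), so compatibility is immediate. For $\cU'_{\Sieg}$ and $\cU'_{\Kl}$: these are defined --- following \bp{\S 5.4} --- as cohomological correspondences supported on the tubes $\cU_n^G$, $\cI_{m,n}^G$ for suitable $(m,n,t)$, using that the relevant $p$-level correspondence contracts the tube structure in the appropriate direction (this is exactly why the ``$-$'' sign and the anti-dominant cocharacters were chosen in \cref{sect:heckeops}). The point is that the Cartesian diagram \eqref{eq:adicSh} and \cref{thm:HTmaps} are compatible with these correspondences --- the correspondence for $G$ restricted to $\hat\iota(\cS^{\tor}_{H,\ds})$ is the corresponding correspondence for $H$, because $H$-level-$\ds$ was defined precisely as $H \cap \hat\gamma K^G_{\Iw}\hat\gamma^{-1}$ --- so $\hat\iota^*$ intertwines the $G$-correspondences with the $H$-correspondences, and $\res$, $\cores$ are built from the same correspondences on $\cS^{\tor}_{G,\Iw}(p^t)$ and $\cU_0^G$.

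The main obstacle I expect is the Hecke-equivariance at $p$, and more precisely checking that the correspondences defining $\cU'_{\Sieg}$, $\cU'_{\Kl}$ genuinely restrict compatibly under $\hat\iota$ --- this requires tracking the translate by $\hat\gamma = \gamma w_1$ carefully through the double-coset description, verifying that $\gamma \in P_{\Sieg}$ (already noted) absorbs the discrepancy, and confirming that the ``contraction'' properties of the tubes $\tI^G_{m,n}$ under $\diag(1,1,p,p)$ and $\diag(1,p,p,p^2)$ (as in the explicit formula $\stbt xyzx \cdot \delta = \stbt{px}{p^{-1}y}{p^3z}{px}$) pull back to the corresponding contraction properties for $\tZ_m^H$ on the $H$-side. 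Everything else is a matter of assembling standard functorialities of cohomology with supports.
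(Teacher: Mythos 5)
Your proof is correct and follows essentially the same approach as the paper, which compresses the argument into two sentences (citing \cref{prop:Cartadic} for the existence of the middle horizontal map and \bp{Lemma 5.17} for the Hecke compatibility); you have simply unpacked the functorialities of cohomology with supports and spelled out the Hecke-equivariance in more detail.
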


  \begin{proof}
   The only non-obvious step of the diagram is the existence of the middle horizontal map, which follows from \cref{prop:Cartadic}. The compatibility with Hecke actions is an easy check, cf.~\bp{Lemma 5.17}.
  \end{proof}

  \begin{proposition}[Change of support condition]
   \label{prop:changesupport}
   The maps \eqref{eq:iota1} for $(m, n)$, $(m, 0)$, and $(0, 0)$ fit into a diagram
   \[
    \begin{tikzcd}
     R\Gamma_{\cI_{mn}^G}(\cU_n^G,\cV) \arrow{r}{\hat\iota^*} &
     R\Gamma_{\cZ_m^H}\left(\cS^{\tor}_{H, \ds}(p^t), \hat\iota^*(\cV)\right)\\
     R\Gamma_{\cI_{m0}^G}(\cU_0^G,\cV) \arrow{u}{\res} \arrow{r}{\hat\iota^*} \dar["\cores"]& R\Gamma_{\cZ_m^H}\left(\cS^{\tor}_{H, \ds}(p^t), \hat\iota^*(\cV)\right) \uar[equals] \arrow{d}{\cores} \\
     R\Gamma_{\cI_{00}^G}(\cU_0^G,\cV) \arrow{r}{\hat\iota^*} &
     R\Gamma_{\cZ_0^H}\left(\cS^{\tor}_{H, \ds}(p^t), \hat\iota^*(\cV)\right).
    \end{tikzcd}
   \]
   The complexes in the left column have compatible actions of the Hecke operators away from $p$, and of $\cU'_{\Kl}$ and $\cU'_{\Sieg}$ at $p$, and the maps $\res$ and $\cores$ are compatible with these.
  \end{proposition}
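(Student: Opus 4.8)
The plan is to realise every arrow in the diagram, \emph{including} the vertical maps $\res$ and $\cores$ on the left, as an instance of the functoriality of cohomology with supports; commutativity then reduces to the mutual compatibility of the Cartesian squares \eqref{eq:adicSh} for the three parameter triples $(m,n,t)$, $(m,0,t)$ and $(0,0,t)$, all of which have already been established above.

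Concretely, on the $G$-side the map $\res$ is restriction of cohomology with supports along the open immersion $\cU^G_n\hookrightarrow\cU^G_0$; this makes sense because $\cI^G_{m,n}=\cI^G_{m,0}\cap\cU^G_n$, which is part~(ii) of the ``tubes of smaller radius'' proposition. The map $\cores$ is enlargement of supports along the closed inclusion $\cI^G_{m,0}\subseteq\cI^G_{0,0}$ of subsets of $\cU^G_0$, valid since $\overline{\cB}^\circ_m\subseteq\overline{\cB}^\circ_0$. On the $H$-side the three occurrences of $R\Gamma_{\cZ^H_m}\bigl(\cS^{\tor}_{H,\ds}(p^t),\hat\iota^*\cV\bigr)$ literally coincide: $\cZ^H_m=\hat\iota^{-1}(\cI^G_{m,n})$ is independent of $n$, and the excision isomorphism $R\Gamma_{\cZ^H_m}(\cU^H_n,\hat\iota^*\cV)\cong R\Gamma_{\cZ^H_m}\bigl(\cS^{\tor}_{H,\ds}(p^t),\hat\iota^*\cV\bigr)$ --- valid because $\cZ^H_m$ is closed in $\cS^{\tor}_{H,\ds}(p^t)$ --- is compatible as $n$ varies, by transitivity of excision. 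This accounts for the ``$=$'' in the top right corner.

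With the maps in hand, both squares commute by the naturality of $\hat\iota^*$. The upper square expresses that pullback along the closed embedding $\cZ^H_m\hookrightarrow\cI^G_{m,n}$ lying over $\cU^H_n\hookrightarrow\cU^G_n$ commutes with restriction to the open subspace $\cU^G_n\subseteq\cU^G_0$, which is immediate from \eqref{eq:adicSh} at $(m,n)$ and $(m,0)$. The lower square expresses that $\hat\iota^*$ commutes with the simultaneous support-enlargements $\cI^G_{m,0}\subseteq\cI^G_{0,0}$ and $\cZ^H_m\subseteq\cZ^H_0$, which follows from \eqref{eq:adicSh} at $(m,0)$ and $(0,0)$. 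In each case one reduces, by choosing suitable resolutions, to a commuting diagram of honest sections. For the Hecke action: the operators away from $p$ come from prime-to-$p$ correspondences that commute with $\hat\iota$ and preserve all the tubes involved, so nothing needs checking; the operators $\cU'_{\Sieg}$ and $\cU'_{\Kl}$ at $p$ act on each $R\Gamma_{\cI^G_{m,n}}(\cU^G_n,\cV)$ because their level-$p^t$ correspondence contracts $\cU^G_n$ into itself and $\cI^G_{m,n}$ into itself, and contracts the corresponding subsets on the $H$-side compatibly with \eqref{eq:adicSh}; hence $\hat\iota^*$, $\res$ and $\cores$ intertwine the $G$- and $H$-side actions, by the same argument as \bp{Lemma~5.17} (which also proves the analogous assertion in \cref{prop:classicalcomp}).

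The only point that requires genuine care, and which I expect to be the main obstacle, is to fix the overconvergence radii and the level at $p$ for the Hecke correspondences so that this ``potency''/contraction property holds \emph{simultaneously} for all three support conditions and is respected by the closed embedding $\hat\iota$. Once this bookkeeping is arranged --- following the recipe of \cite{boxerpilloni20}, where exactly such contraction statements are proved --- no ideas beyond those of \emph{op.\,cit.} are needed, and the proposition follows.
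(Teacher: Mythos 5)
Your proposal is correct and takes essentially the same approach as the paper, which gives this result a one-line proof ("Immediate from the fact that $\cI^G_{mn} = \cI_{m0}^G \cap \cU_n^G$ and standard functoriality properties of cohomology with support"); you are simply spelling out what "standard functoriality" entails — identifying $\res$ with restriction to the open $\cU^G_n \subset \cU^G_0$ using $\cI^G_{mn}=\cI^G_{m0}\cap\cU^G_n$, identifying $\cores$ with extension of supports along $\cI^G_{m,0}\subseteq\cI^G_{0,0}$, matching these with excision and support-enlargement on the $H$ side via the Cartesian squares \eqref{eq:adicSh}, and noting that Hecke compatibility follows as in \bp{Lemma 5.17}. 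Minor quibbles: there are two, not three, occurrences of $R\Gamma_{\cZ^H_m}(\cS^{\tor}_{H,\ds}(p^t),\hat\iota^*\cV)$ in the diagram (the third cell on the right has support $\cZ^H_0$); and your closing caveat about simultaneous "potency" of the Hecke correspondences, while sensible, is not really an obstacle here — the paper defers the finer points of the Hecke action to the proposition that follows, which cites \bp{Lemma 5.17} and \bp{Theorem 5.66} directly.
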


  \begin{proof}
   Immediate from the fact that $\cI^G_{mn} = \cI_{m0}^G \cap \cU_n^G$ and standard functoriality properties of cohomology with support.
  \end{proof}

  \begin{proposition}
   The spaces in the left columns of the diagrams in \cref{prop:classicalcomp,prop:changesupport} all have actions of the prime-to-$p$ Hecke operators, and of the Hecke operators $\cU'_{\Sieg}$, $\cU'_{\Kl}$, $\cU'_B$ at $p$. Moreover, the maps in the left column of \cref{prop:changesupport} become isomorphisms on the finite-slope part for $\cU'_B$.
  \end{proposition}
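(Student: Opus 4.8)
The Hecke-equivariance part is essentially bookkeeping, and I would dispatch it first. The prime-to-$p$ operators act on each complex in the left columns because they come from correspondences over $\Spec\QQ$ that are compatible with the Hodge--Tate period maps, and so preserve all the tubes $\cI^G_{m,n}\subseteq\cU^G_n$; the operators $\cU'_{\Sieg}$ and $\cU'_{\Kl}$ act by \cref{prop:classicalcomp,prop:changesupport}. I would then simply define $\cU'_B$ on these complexes to be the composite $\cU'_{\Sieg}\cdot\cU'_{\Kl}$, and check, using explicit coset decompositions, that the two factors commute there (so the composite is unambiguous), that $\cU'_B$ is compatible with $\res$, $\cores$ and the tame operators, and that all of this is consistent with the identification $R\Gamma^G_{w_1}(\kappa)^{-,\fs}=R\Gamma_{\cI_{00}}(\cU^G_0,\cV)^{-,\fs}$ used later.

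The substantive claim is that $\res$ and $\cores$ become isomorphisms on the $\cU'_B$-finite-slope part, and for this the plan is to run the standard ``$U_p$ improves the radius of overconvergence'' potency argument, exactly as in \bp{\S 5.4}. Recall that $\cU'_B$ is computed by a Hecke correspondence at $p$ whose ``contracting'' leg is governed by the torus element $\delta=\diag(p^3,p^2,p,1)$ of \cref{sect:explicitparam}, which contracts the $(x,z)$-coordinates of $U^G_{w_1}$, combined with an averaging over the unipotent translations $\stbt xyzx\mapsto\stbt x{y+\alpha}zx$ occurring in the description of the tubes $\tU^G_n$, $\tI^G_{m,n}$. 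From this I would deduce that, for $k\gg 0$, the operator $(\cU'_B)^k$ carries cohomology supported on a given tube $\cI^G_{m,n}$ inside $\cU^G_n$ into cohomology supported on a strictly smaller tube inside a strictly smaller open set; feeding this into the diagram of \cref{prop:changesupport} yields factorisations of $(\cU'_B)^k$ through $\cores$ and through $\res$, in both compositional orders. Since $\cU'_B$ is invertible on the finite-slope part, these factorisations force $\cores$ and $\res$ to be isomorphisms there. Equivalently, since $\tU^G_n$ and $\tI^G_{m,n}$ are precisely the tubes $]C_{w_1,\Fp}[_{(n,n)}K^G_{\Iw}(p^t)$ and $]C_{w_1,\Fp}[_{(\overline m,n)}K^G_{\Iw}(p^t)$ of \bp{\S 3.3.3}, one may instead identify $\res$ and $\cores$ with the change-of-radius maps of \bp{\S 5.4} and quote directly the assertion there that these become isomorphisms after passing to finite slope.

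The one point that genuinely needs care --- and which I expect to be the main obstacle --- is the compatibility of the $\cU'_B$-correspondence with the \emph{specific} tubes attached to $\hat\iota$: one must verify that the averaging in the ``Klingen'' direction absorbs the fact that $\delta$ moves the $y$-coordinate away from $\Zp$, so that the single operator $\cU'_B$ (rather than $\cU'_{\Sieg}$ for one change and $\cU'_{\Kl}$ for the other) dominates both $\res$, which shrinks the open set, and $\cores$, which enlarges the support. Concretely this means tracking the integral normalisations of \cref{sect:heckeops} against the explicit shapes of $\tI^G_{m,n}$, $\tU^G_n$ from \cref{eq:mnt} onwards; but this is formally the same computation as in \bp{\S 5.4}, so I do not anticipate any genuinely new difficulty.
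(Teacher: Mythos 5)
Your proposal is correct and follows essentially the same route as the paper's proof: the paper likewise treats the prime-to-$p$ equivariance as immediate from the invariance of the Hodge--Tate period map, cites \bp{Lemma 5.17} for the Hecke-module structure, and then, exactly as in the ``equivalently'' half of your argument, observes that $(\cI_{m,n}, \cU_n)$ is an allowed support condition in the sense of \bp{\S 5.4.3} and invokes \bp{Theorem 5.66} for the finite-slope isomorphisms. Your sketch of the underlying potency mechanism (contraction by $\delta$, averaging over the unipotent translates) is a correct unpacking of what BP's theorem proves, but the paper simply cites it as a black box.
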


  \begin{proof}
   The compatibility with Hecke operators away from $p$ is clear, since the Hodge--Tate period map is invariant under the action of the prime-to-$p$ Hecke algebra. The fact that the maps in the left column of \cref{prop:classicalcomp} are maps of Hecke modules is an instance of \bp{Lemma 5.17}.

   The assertions regarding the finite-slope part follow from \bp{Theorem 5.66}, since one can check that $(\cI_{m, n}, \cU_n)$ defines an ``allowed support condition'' in the sense of \bp{\S 5.4.3}.
  \end{proof}

  This shows that we have well-defined maps
  \begin{equation}
   \label{eq:iota2}
   R\Gamma^G_{w_1}(\kappa)^{-, \fs} \longrightarrow R\Gamma_{\cZ_m^H}\left(\cS^{\tor}_{H, \ds}(p^t), \hat\iota^*(\cV)\right)
  \end{equation}
  for any $0 \le m < t$, compatible under corestriction, extending \eqref{eq:iota1} for in the $m = 0$ case.


 \subsection{Functoriality of coefficients}
  \label{sect:branchcoeffs}
  \begin{proposition}
   Let $\kappa_1 = (r_1, -r_2-2; r_1 + r_2)$ with $r_1 \ge r_2 \ge -1$, and let $\tau = (t_1, t_2; r_1 + r_2)$ where $t_i \ge -1$ and $t_1 + t_2 = r_1 - r_2 - 2$. Then there is a nonzero homomorphism of $(\gamma^{-1} M_H \gamma)$-representations
   \[ V^G_{\kappa_1} |_{\gamma^{-1} M_H\gamma } \to V^H_{\tau}, \]
   uniquely determined up to scaling.
  \end{proposition}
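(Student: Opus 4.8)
The plan is to reduce this to a statement about branching of algebraic representations under the inclusion of Levi subgroups. First I would observe that $M_G \cong \GL_2 \times \GL_1$ and $M_H = T_H \cong (\GL_1 \times \GL_1) \times_{\GL_1} (\GL_1 \times \GL_1)$, so $\gamma^{-1} M_H \gamma$ is a maximal torus together with one extra $\GL_1$ factor sitting inside $\GL_2 \times \GL_1$; concretely, under the identification of $M_G$ with $\GL_2 \times \GL_1$ the subgroup $\gamma^{-1}M_H\gamma$ is $\gamma^{-1}(T_{\GL_2}) \gamma \times \GL_1$, i.e. a (twisted) maximal torus of $\GL_2$ times the similitude $\GL_1$. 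So the claim is really about the restriction of an irreducible algebraic representation of $\GL_2$ (namely $\Sym^{r_1 - r_2 - 2}$ of the standard, suitably twisted by a power of $\det$ to account for the central character $c = r_1 + r_2$ and the second weight $-r_2 - 2$) to a maximal torus, decomposed according to weight spaces.

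Next I would make the weight bookkeeping explicit. The representation $V^G_{\kappa_1}$ has highest weight $\kappa_1 = (r_1, -r_2 - 2; r_1 + r_2)$ with respect to $B_G \cap M_G$, which as a representation of $\GL_2 \times \GL_1$ is $\Sym^{r_1 + r_2 + 1}(\mathrm{std}) \otimes \det^{-r_2 - 2} \otimes (\text{character of }\GL_1)$ — wait, more carefully: the two relevant coordinates are $r_1$ and $-r_2 - 2$, so the $\GL_2$-representation has highest weight $(r_1, -r_2 - 2)$, hence is $\Sym^{r_1 + r_2 + 2}(\mathrm{std}) \otimes \det^{-r_2 - 2}$, of dimension $r_1 + r_2 + 3$. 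Its restriction to a maximal torus of $\GL_2$ breaks up into one-dimensional weight spaces indexed by the weights $(r_1 - j, -r_2 - 2 + j)$ for $0 \le j \le r_1 + r_2 + 2$. On the $H$-side, $V^H_\tau$ with $\tau = (t_1, t_2; r_1 + r_2)$, viewed as a representation of $M_H = T_H$, is one-dimensional with the torus acting by the character $\tau$. So the existence of a nonzero $(\gamma^{-1}M_H\gamma)$-equivariant map $V^G_{\kappa_1}|_{\gamma^{-1}M_H\gamma} \to V^H_\tau$ amounts exactly to checking that the character $\tau$ (conjugated by $\gamma$ into $M_G$) occurs among the torus weights of $V^G_{\kappa_1}$; and since each such weight space is one-dimensional, the map is then unique up to scaling, and indeed the multiplicity is at most (hence exactly) one.

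The remaining step is to verify this occurrence condition matches the hypothesis $t_1 + t_2 = r_1 - r_2 - 2$, $t_i \ge -1$. Translating the weight $\tau$ through the conjugation by $\gamma$ and the identification of $M_G$ with $\GL_2 \times \GL_1$: the similitude/$\GL_1$-part $c = r_1 + r_2$ matches automatically on both sides (it is the same $s^c$ in the parametrization), and the $\GL_2$-part of $\gamma^{-1} \tau \gamma$ should be some weight $(r_1 - j, -r_2 - 2 + j)$ with $0 \le j \le r_1 + r_2 + 2$; solving, $t_1$ and $t_2$ are an affine reparametrization of $j$ and $r_1 + r_2 + 2 - j$, and the conditions $t_i \ge -1$ together with $t_1 + t_2 = r_1 - r_2 - 2$ are exactly equivalent to $j$ lying in the admissible range. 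I expect the main obstacle to be purely notational: getting the conjugation by $\gamma$ (which acts on $\bar{\mathfrak n}_G$ and on the relevant flag coordinates via the formula in \cref{sect:explicitparam}) to interact correctly with the weight conventions for $M_G$ versus $T_H$, so that the numerology $t_1 + t_2 = r_1 - r_2 - 2$ falls out cleanly rather than off by a shift. Once the weights are correctly aligned, existence and uniqueness are formal consequences of the multiplicity-one decomposition of an irreducible $\GL_2$-representation into torus weight spaces.
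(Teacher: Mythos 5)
Your argument is correct, and it takes a slightly different route from what the paper records. The paper states the proposition without a formal proof and immediately moves to fixing a normalization, realizing $(V^G_{\kappa_1})^\vee$ as a space of polynomial functions on $M_G$ and using the openness of $\gamma^{-1}M_H\gamma\cdot B_{M_G}$ in $M_G$ to pin down a unique $f$ with $f(\mathrm{id})=1$ transforming by $\tau^{-1}$. Your weight-space argument is more elementary and does prove existence explicitly: since $M_H=T_H$ and $\gamma\in M_G$, the subgroup $\gamma^{-1}M_H\gamma$ is just an inner conjugate of the diagonal maximal torus $T$ inside $M_G\cong\GL_2\times\GL_1$; the irreducible representation $V^G_{\kappa_1}$ (highest weight $(r_1,-r_2-2;r_1+r_2)$, the $\GL_2$-part being $\Sym^{r_1+r_2+2}\otimes\det^{-r_2-2}$, as you say) decomposes multiplicity-freely under any maximal torus into the characters $(r_1-j,-r_2-2+j;r_1+r_2)$ for $0\le j\le r_1+r_2+2$, and because the conjugation by $\gamma$ is inner these are also the weights with respect to $\gamma^{-1}T_H\gamma$ when matched against characters of $T_H=T$. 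So the $\Hom$-space has dimension $0$ or $1$, and it is nonzero exactly when $\tau$ is one of these weights. The paper's open-orbit argument is the same fact phrased via the function model, and buys the canonical normalization used later; your phrasing buys a cleaner existence check.

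One small imprecision worth flagging: you say the hypotheses $t_i\ge -1$ together with $t_1+t_2=r_1-r_2-2$ are ``exactly equivalent'' to $j$ lying in the admissible range. They are not equivalent: the occurrence condition is $-r_2-2\le t_i\le r_1$ (equivalently $0\le j\le r_1+r_2+2$), and $t_i\ge -1$ is strictly stronger when $r_2>-1$. This does not affect your proof, since only sufficiency is needed; the stronger bound $t_i\ge -1$ is imposed for an unrelated reason, namely so that there are interesting cuspidal representations of $H$ contributing to coherent cohomology at weight $\tau$, as the paper remarks just after \S\ref{sect:branchcoeffs}.
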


  It will be helpful to fix a normalisation for this map, by choosing a vector $f \in (V^G_{\kappa_1})^\vee$ which transforms by $\tau^{-1}$ under $\gamma^{-1} M_H \gamma$. We have an explicit presentation of $(V^G_{\kappa_1})^\vee = V^G_{(-w_{0, M} \kappa_1)}$ as the space of polynomial functions $f \in \cO(M_G)$ which satisfy $f(mb) = \kappa_1(b) f(m)$ for all $b \in B_{M_G}$ and $m \in M_G$, with $M_G$ acting by left-translation. Since $\gamma^{-1} M_H \gamma \cdot B_{M_G}$ is open in $M_G$, we can choose a unique $f$ which satisfies $f(\mathrm{id}) = 1$ and transforms via $\tau^{-1}$ under the action of $M_H$.

  This map gives a homomorphism of sheaves on $\cS^{\tor}_{H, \ds}(p^t)$,
  \[ \hat\iota^*\left(\cV_\kappa^G\right) \longrightarrow \cV_\tau^H, \]
  and combining this with \cref{eq:iota2}, we obtain maps of complexes
  \[ R\Gamma^G_{w_1}(\kappa_1)^{-, \fs} \longrightarrow R\Gamma_{\cZ_m^H}\left(\cS^{\tor}_{H, \ds}(p^t), \cV^H_\tau\right)
  \]
  for any $\tau$ in the appropriate range, and similarly for cuspidal cohomology.

  \begin{remark}
   The map is formally well-defined for a rather wider range of values of the parameters; but we have restricted to the case when $r_1 \ge r_2 \ge -1$ and $t_1, t_1 \ge -1$, in order that there are interesting cuspidal automorphic representations contributing to $H^2$ for both $\cV_{\kappa_1}^G$ and $\cV^H_{\tau}$.

   If we set $k_i = r_i + 3$ and $c_i = t_i + 2$, then $(k_1, k_2, c_1, c_2)$ will define a point lying on the top edge of the region labelled $(f)$ in \cite[Diagram 2]{LZvista}. Unfortunately, it seems to be difficult to extend our present analysis to points in the interior of this region; this would require some sort of ``nearly version'' of higher Coleman theory, analogous to the theory of nearly-overconvergent families in $H^0$ of modular curves recently introduced by Andreatta--Iovita \cite{andreattaiovita21}. See \cite[\S 6]{LPSZ1} for an analogous theory in the ordinary case (with $r_2$ fixed, rather than varying as here).
  \end{remark}

 \subsection{Change of level}
  \label{sect:changelevel}
  Finally, we note that for any $t \ge 1$, we have $[K_{H, \ds}(p^t): K_{H, \ds}(p^{t+1})] = p^4 = [K_{G, \Iw}(p^t) : K_{G, \Iw}(p^{t+1})]$, and hence the natural map
  \[ \cS_{H, \ds}(p^{t+1}) \longrightarrow \cS_{H, \ds}(p^{t}) \times_{\cS_{G, \Iw}(p^{t})} \cS_{G, \Iw}(p^{t+1})
  \]
  is an isomorphism. So the pushforward (trace) maps arising from changing $t$ on the two spaces are compatible with the pullback $\hat\iota^*$, and similarly for the cohomology with supports, for any support condition invariant under $K_{G, \Iw}(p^t)$.

  Hence, if we temporarily write $\cI_{mn}^G(p^t)$ etc to distinguish our various locally closed subspaces of Shimura varieties at the different levels, then we have trace maps
  \[ R\Gamma_{\cI^G_{mn}(p^{t+1})}(\cU^G_n(p^{t+1}), \cV^G_\kappa) \longrightarrow R\Gamma_{\cI^G_{mn}(p^t)}(\cU^G_n(p^t), \cV^G_\kappa) \]
  and
  \[  R\Gamma_{\cZ^H_{m}(p^{t+1})}(\cS_{H, \ds}(p^{t+1}), \cV^H_\tau) \longrightarrow R\Gamma_{\cZ^H_{m}(p^t)}(\cS_{H, \ds}(p^{t}), \cV^H_\tau).\]
  and these are compatible with the pullback maps $\hat\iota^*$, and the restriction/corestriction maps for varying $m, n$. Moreover, the trace maps for $G$ are isomorphisms on the finite-slope part by \bp{Theorem 5.14}.

  We can thus define a map
  \[
   R\Gamma_{w_1}(\kappa)^{-, \fs} \longrightarrow R\Gamma_{\cZ^H_m(p^t)}(\cS_{H, \ds}(p^t), \cV^H_\tau).
  \]
  for any $t \ge 1$, $m \ge 0$ (not necessarily with $t > m$) by composing with the trace map from level $t'$ for some auxiliary $t' > m$; this allows us to define $\hat\iota^*$ as a map
  \[
   R\Gamma^G_{w_1}(\kappa)^{-, \fs} \to \varprojlim_m R\Gamma_{\cZ^H_m(p^t)}(\cS_{H, \ds}(p^t),  \cV^H_\tau).
  \]
  Since the spaces $\cZ_m^H$ are actually invariant under the Iwahori of $H$, we can trace down further to land in the space
  \[ \varprojlim_m R\Gamma_{\cZ^H_m(p^t)}(\cS_{H, \Iw}(p^t),  \cV^H_\tau),\]
  where we have abused notation a little by using $\cZ^H_m(p^t)$ for the preimages of $\tZ_m^H$ at either Iwahori or $\ds$ level.
  \begin{definition}
   We define
   \[ R\Gamma_{\mathrm{id}}\left(\cS_{H, \Iw}(p^t), \tau\right)^{(-, \dag)} = \varprojlim_m R\Gamma_{\cZ^H_{m}(p^t)}\left( \cS_{H, \Iw}(p^t), \cV^H_\tau\right) .\]
  \end{definition}

  This space can be interpreted as the compactly-supported cohomology of the intersection $\bigcap_m \cZ^H_m(p^t) = \pi_{H}^{-1}\left( \{\mathrm{id}_H\} \right)$; we shall recall this in a little more detail in the next section, where we shall allow more general coefficients.


\section{Torsors}

 We now begin constructing the ``locally analytic'' version of the pullback map on higher Coleman theory.
 \subsection{Torsors on flag varieties}

  The map $x \mapsto x^{-1}: G\rightarrow \FL_G$ (recall that $\FL_G=P_G\backslash G$) allows us to regard $G$ as a right $P_G$-torsor over $\FL_G$, and similarly to regard $G/ N_G\rightarrow \FL_G$ as a right $M_G$-torsor. We consider their analytifications
  \[ \tP^G: \cG\rightarrow \tFL_G \qquad \text{and}  \qquad \tM^G:\cG/ \cN_G\rightarrow \tFL_G.\]
  which are torsors over $\tFL_G$ under the (affinoid) analytic groups $\cP_G$ and $\cM_G$ respectively. We similarly define torsors over the flag varieties of $H$ and $H_i$ for $i=1,2$.

  \begin{definition} Define $\cP^G_{\HT}$ and $\cM^G_{\HT}$ to be the pullbacks via $\pi^G_{\HT}$ of the torsors $\tP^G$ and $\tM^G$; these are (right) torsors over $\cS_{G,\Iw}(p^t)$ for the groups $\cP_G$ and $\cM_G$. We similarly define $\cP^H_{\HT}$ and $\cM^H_{\HT}$, $\cP^{H_i}_{\HT}$ and $\cM^{H_i}_{\HT}$ for $i=1,2$.
  \end{definition}

  \begin{note}
   It is easy to check that $\cM^H_{\HT}=\cM^{H_1}_{\HT}\times_{\GL_1}\cM^{H_2}_{\HT}$, where we take the fibre product with respect to the action of $\nu$ in the parametrisation of $T$.
  \end{note}


 \subsection{Reduction of structure}

  \begin{definition}
   For $n > 0$, let $\cM^1_{G,n} \triangleleft \cM_G$ be the (affinoid analytic) group of elements which reduce to the identity $\pmod{p^n}$. Define
   \[ \cM^\square_{G,n} = \cM^1_{G,n} \cdot B_{M_G}(\Zp), \]
   which is an affinoid analytic subgroup containing $\Iw_{M_G}(p^n)$. A similar definition applies to $M_H = T$; we write the group as $\cT^{\square}_{n} = T(\Zp) \cT^1_{n}$.
  \end{definition}

  \begin{note}
   We follow \cite{boxerpilloni20} here in using affinoid subgroups and affinoid subspaces of flag varieties to develop the locally-analytic theory, rather than the ``mixed'' spaces (products of some copies of $\cB_n$ and some of $\cB^\circ_n$) used in the previous sections.
  \end{note}

  \begin{note}\label{note:identifyMGsq}
   Identifying $M_G$ with $\GL_2 \times \GL_1$ as in the introduction, we have
   \[\cM^\square_{G,  n} =
    \left\{
     (\stbt x y z w, \lambda) :
     \begin{array}{c}
       x,w,\lambda \in \Zp^\times \cdot (1 + \cB_n),\\ z \in \cB_n, y \in \Zp + \cB_n.
     \end{array}
    \right\}
    \qedhere
   \]
  \end{note}

  \begin{notation}
   Define
   \[ \cT^\ds_n = \{\diag(t_1, t_2, \nu t_2^{-1}, \nu t_1^{-1}) \in \cT^\square_n: t_1-t_2 \in \cB_n\}. \]
  \end{notation}

  Thus $\cT^\ds_n$ and $\cT^\square_n$ are both disjoint unions of copies of $\cT^1_n$, but $\cT^\ds_n$ has fewer of these components than $\cT^\square_n$.

  \begin{proposition} \label{prop:redofstr} Let $t > n > 0$.
   \begin{enumerate}
    \item Over $\cU^G_n$, the torsor $\cM_{\HT}^G$ has a reduction of structure to an \'etale torsor $\cM^G_{\HT,n}$ under the group $\cM^\square_{G,n}$.

    \item Over $\cU^H_{\Iw,n}$, the torsor $\cM_{\HT}^H$ has a reduction of structure to an \'etale torsor $\cM^H_{\HT,n,\Iw}$ under the group $\cT^{\square}_{n}$.

    \item Over $\cU^H_n$, the torsor $\cM_{\HT}^H$ has a reduction of structure to an \'etale torsor $\cM^H_{\HT,n,\ds}$ under the group $\cT^\ds_{n}$ (and this refines the pullback of $\cM^H_{\HT,n,\Iw}$ to level $K^H_{\ds}(p^t)$).

  \end{enumerate}
  \end{proposition}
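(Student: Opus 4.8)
The plan is to obtain (1) and (2) directly from the general reduction-of-structure machinery of higher Coleman theory, and to deduce (3) from (1) by combining it with the functoriality of the Hodge--Tate torsors along $\hat\iota$. For (1), the open set $\tU^G_n =\ ]C_{w_1,\Fp}[_{(n,n)}K^G_{\Iw}(p^t)$ is precisely of the shape to which the reduction-of-structure statement of \bp{\S 4.6} applies, with $w = w_1 \in {}^M W_G$ and $t > n$; the only thing to check is that our conventions (upper-triangular Borel, $P_G = P_{\Sieg}$) are compatible with \emph{op.\ cit.}, which is routine. Concretely, over the big cell $U^G_{w_1}$ the torsor $\cM^G_{\HT}$ carries the local section determined by the coordinate $Z \mapsto P_G\backslash P_G \stbt{1}{}{Z}{1} w_1$ of \cref{sect:explicitparam}, and over $\tU^G_n$ this section differs from each of its $K^G_{\Iw}(p^t)$-translates by a gauge transformation which, using the formula $Z' = (ZB+D)^{-1}(ZA+C)$ together with the bounds $x,z\in\cB^\circ_n$, $y \in \Zp + \cB_n$ and $t > n$, one checks lands in $\cM^\square_{G,n} = \cM^1_{G,n}\cdot B_{M_G}(\Zp)$; this produces the \'etale $\cM^\square_{G,n}$-torsor $\cM^G_{\HT,n}$. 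Part (2) is the same statement for the group $H$ and the element $\id \in {}^{M}W_H = W_H$; since $M_H = T_H$, the group ``$\cM^\square$'' is here by definition $\cT^\square_n$.

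\textbf{Part (3).} By the compatibility of Hodge--Tate period maps at perfectoid infinite level recorded in (the proof of) \cref{thm:HTmaps}, there is a canonical morphism of torsors $\cM^H_{\HT} \to \hat\iota^*\cM^G_{\HT}$ covering $\hat\iota$ and lying over the homomorphism of structure groups given by $\iota: M_H \to M_G$ followed by conjugation by $\gamma \in M_G$ --- the conjugation being by $\gamma$, and not by $\hat\gamma = \gamma w_1$ (since $w_1 \notin M_G$), because \eqref{eq:gamma-param} identifies the big-cell coordinate on $\FL_H$ at $\id$ with the big-cell coordinate on $\FL_G$ at $w_1$ via the adjoint action of $\gamma$. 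Restricting this morphism to $\cU^H_n = \hat\iota^{-1}(\cU^G_n)$ and invoking the $\cM^\square_{G,n}$-reduction of (1), we find that the image of $\cM^H_{\HT}|_{\cU^H_n}$ inside $\hat\iota^*\cM^G_{\HT}$ lands in the sub-torsor $\hat\iota^*\cM^G_{\HT,n}$; equivalently, $\cM^H_{\HT}|_{\cU^H_n}$ admits a reduction of structure to the subgroup
\[ \{\, h \in \cT^\square_n : \gamma\,\iota(h)\,\gamma^{-1} \in \cM^\square_{G,n} \,\} \subseteq \cT^\square_n. \]
A direct matrix computation --- conjugating $\diag(t_1, t_2, \nu t_2^{-1}, \nu t_1^{-1})$ by $\gamma$ and comparing the result with the description of $\cM^\square_{G,n}$ in \cref{note:identifyMGsq} --- shows that the only extra constraint so imposed is that the off-diagonal entry $t_1 - t_2$ (up to a unit) lie in $\cB_n$, so this subgroup is exactly $\cT^\ds_n$. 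This yields the \'etale $\cT^\ds_n$-torsor $\cM^H_{\HT,n,\ds}$, and the refinement claim is immediate since $\cT^\ds_n \subseteq \cT^\square_n$ compatibly with the torsor structure maps.

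\textbf{Main obstacle.} The delicate point in (3) is the bookkeeping: one must pin down the canonical map $\cM^H_{\HT} \to \hat\iota^*\cM^G_{\HT}$ and its effect on structure groups precisely (tracking the $\hat\gamma = \gamma w_1$ twist and the roles of $\gamma$ versus $w_1$), and --- crucially --- verify that the resulting reduction to $\cT^\ds_n$ genuinely descends to the Shimura variety $\cS_{H,\ds}(p^t)$ and not merely to $\cS_{H,\Iw}(p^t)$. This is exactly where the definition $K^H_\ds(p^t) = H(\Qp) \cap \hat\gamma\, K^G_{\Iw}(p^t)\,\hat\gamma^{-1}$ earns its keep: the $K^G_{\Iw}(p^t)$-equivariant cocycle computation underlying (1), pulled back along $\hat\iota$, becomes $K^H_\ds(p^t)$-equivariant with values in $\cT^\ds_n$, whereas for the larger group $K^H_{\Iw}(p^t)$ one would only obtain values in $\cT^\square_n$. (Alternatively, one can bypass the functoriality argument entirely and prove (3) by writing down the tautological section of $\cM^H_{\HT}$ over the big cell of $\FL_H$ in the coordinates \eqref{eq:gamma-param} and computing its $K^H_\ds(p^t)$-transition functions directly; this reduces to the same matrix identity.)
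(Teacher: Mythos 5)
Your handling of parts (1) and (2) matches the paper's approach (the paper simply cites \bp{\S 6.2.1} and says ``the proofs of (2) and (3) are similar''). For part (3), however, you take a genuinely different route: rather than running the Boxer--Pilloni reduction-of-structure argument directly over the $H$-Shimura variety at $\ds$-level --- which is what ``similar'' means in the paper's proof --- you construct $\cM^H_{\HT,n,\ds}$ as a pullback of $\cM^G_{\HT,n}$ along $\hat\iota$. This is a valid alternative, and in fact the content of your argument is essentially that of the paper's later \cref{prop:pullbackcomp} and the lemma preceding it, folded into the proof of the reduction of structure itself. The two routes buy slightly different things: the paper's organisation keeps \cref{prop:redofstr} as a statement internal to each group, then records the compatibility under $\hat\iota$ separately; yours builds the compatibility in from the start, at the cost of an extra verification.

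That extra verification is where your write-up is a bit cavalier. The phrase ``invoking the $\cM^\square_{G,n}$-reduction of (1), we find that the image of $\cM^H_{\HT}|_{\cU^H_n}$ \ldots\ lands in the sub-torsor $\hat\iota^*\cM^G_{\HT,n}$'' does not follow formally from (1): the naive map of torsors induced by the inclusion $H\hookrightarrow G$ does \emph{not} carry the tautological big-cell section on the $H$-side into $\cM^G_{\HT,n}$ --- a direct computation shows the two canonical sections differ by right-translation by $\gamma^{-1}\notin\cM^\square_{G,n}$. One must first twist the torsor map by $\gamma$ precisely so that the sections match; only then does the landing-in-$\hat\iota^*\cM^G_{\HT,n}$ claim hold, and verifying it amounts to the cocycle computation that the paper carries out in \cref{prop:pullbackcomp}. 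You do allude to this (``the conjugation being by $\gamma$\ldots'') and you sketch the fallback cocycle argument in your ``Main obstacle'' paragraph, so the idea is right, but as written the central step is asserted rather than proved. There is also a minor bookkeeping slip: the structure-group homomorphism should be $h\mapsto\gamma^{-1}h\gamma$ (matching the paper's lemma $\cT^\ds_n = \cT\cap\gamma\cM^\square_{G,n}\gamma^{-1}$), not $h\mapsto\gamma h\gamma^{-1}$; since both conjugates have the same off-diagonal entry up to sign, the resulting subgroup $\cT^\ds_n$ is unchanged, so this does not affect the conclusion, but it should be corrected.
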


  \begin{proof}
   Part (1) is essentially the result of \bp{\S 6.2.1}. The proofs of (2) and (3) are similar.
  \end{proof}

  \begin{lemma}
   We have the following inclusions of subgroups.
   \begin{itemize}
    \item As subgroups of $\cM_G$, we have
    \[ \cT^\ds_{n} = \cT \cap \gamma \cM^\square_{G,n}\gamma^{-1}. \]
    \item As subgroups of $\cG$,
    \[ \hat\gamma^{-1} \cdot K^H_{\ds}(p^n)\cH^1_{n} \cdot \hat\gamma \subset K^G_{\Iw}(p^n)\cG^1_{n}, \]
    where $\hat\gamma = \gamma w_1$ as usual.
   \end{itemize}
  \end{lemma}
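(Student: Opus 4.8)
The plan is to verify both inclusions by direct matrix computation, using the explicit descriptions already available. For the first inclusion, recall that $\gamma \in M_G$ (it lies in the Siegel Levi, since its lower-left $2\times 2$ block vanishes), so conjugation by $\gamma$ preserves $T$; thus $\cT \cap \gamma\cM^\square_{G,n}\gamma^{-1}$ makes sense and equals $\gamma(\gamma^{-1}\cT\gamma \cap \cM^\square_{G,n})\gamma^{-1} = \gamma(\cT \cap \cM^\square_{G,n})\gamma^{-1}$. Now $\cT \cap \cM^\square_{G,n}$ consists of diagonal tori $\diag(t_1,t_2,\nu t_2^{-1},\nu t_1^{-1})$ with $t_i, \nu \in \Zp^\times(1+\cB_n)$ and no further constraint (the off-diagonal conditions in \cref{note:identifyMGsq} are vacuous for diagonal elements). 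Conjugating by $\gamma = \begin{smatrix}1\\1&1\\&&1\\&&-1&1\end{smatrix}$ (which acts on the $\GL_2$-factor of $M_G$ as conjugation by $\stbt{1}{0}{1}{1}$) sends $\diag(t_1,t_2)$ to $\stbt{t_1}{0}{t_1-t_2}{t_2}$, so the image lies in $\cM_G$ but in $\cT$ only when $t_1 - t_2 \in \cB_n$ — but wait, that is the wrong direction. I should instead compute $\gamma^{-1}\diag(t_1,t_2,\dots)\gamma$; since $\gamma^{-1}$ acts as conjugation by $\stbt{1}{0}{-1}{1}$, one gets $\stbt{t_1}{0}{t_2 - t_1}{t_2}$, which lies back in $\cT$ precisely when $t_1 - t_2 \in \cB_n$. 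Running the equality through, $\gamma(\cT\cap\cM^\square_{G,n})\gamma^{-1}$ is exactly the set of diagonal $\diag(t_1,t_2,\nu t_2^{-1},\nu t_1^{-1}) \in \cT^\square_n$ with $t_1 - t_2 \in \cB_n$, which is the definition of $\cT^\ds_n$. (The bookkeeping of which of $\gamma,\gamma^{-1}$ appears where is the one genuinely fiddly point; the cleanest route is to note both $\cT^\ds_n$ and $\cT\cap\gamma\cM^\square_{G,n}\gamma^{-1}$ are unions of translates of $\cT^1_n$ and compare the finite index sets of components, reducing everything mod $p^n$ where $\gamma$ becomes an honest element of $M_G(\ZZ/p^n)$.)

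For the second inclusion, I would argue componentwise: $K^H_{\ds}(p^n)\cH^1_n$ is generated (as a group, modulo the identity component $\cH^1_n$) by the finitely many elements of $K^H_{\ds}(p^n)$ lifting the finite group $K^H_{\ds}(p^n)/\cH^1_n \subset H(\ZZ/p^n)$, while $\cG^1_n$ is normal in $\cG$ with $\cG^1_n \supset \hat\gamma^{-1}\cH^1_n\hat\gamma$ automatically (since conjugation preserves the condition ``$\equiv 1 \bmod p^n$'' — $\hat\gamma \in G(\Zp)$). So it suffices to check that for $h \in K^H_{\ds}(p^n)$, the element $\hat\gamma^{-1} h \hat\gamma$ reduces mod $p^n$ into $B_G(\ZZ/p^n)$, i.e.~lies in $K^G_{\Iw}(p^n)$. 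But this is precisely the \emph{definition} of $K^H_{\ds}(p^t)$: we set $K^H_{\ds}(p^t) = H(\Qp) \cap \hat\gamma K^G_{\Iw}(p^t)\hat\gamma^{-1}$, so $\hat\gamma^{-1} K^H_{\ds}(p^n)\hat\gamma \subset K^G_{\Iw}(p^n)$ holds tautologically. Combining, $\hat\gamma^{-1}\cdot K^H_{\ds}(p^n)\cH^1_n\cdot\hat\gamma = (\hat\gamma^{-1}K^H_{\ds}(p^n)\hat\gamma)\cdot(\hat\gamma^{-1}\cH^1_n\hat\gamma) \subset K^G_{\Iw}(p^n)\cdot\cG^1_n$, as claimed.

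The main obstacle — such as it is — is the first bullet point, specifically keeping straight the twist by $\gamma$ versus $\hat\gamma = \gamma w_1$ and making sure the conjugation is applied on the correct side; the identity $\hat\iota(B_H h) = P_G h\hat\gamma$ together with $\gamma \in P_{\Sieg}$ is what forces the appearance of $\gamma$ (rather than $\hat\gamma$) in the torsor statement, since the $M_G$-torsor structure only sees $\gamma$ modulo the Weyl element. Once the correct normalisation is pinned down, both verifications are routine linear algebra with $4\times 4$ matrices; the second bullet is essentially immediate from the definition of $K^H_{\ds}$.
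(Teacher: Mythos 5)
Your treatment of the second bullet is correct and matches the paper's proof almost verbatim: the inclusion on $\Zp$-points is tautological from the definition $K^H_{\ds}(p^t) = H(\Qp) \cap \hat\gamma K^G_{\Iw}(p^t)\hat\gamma^{-1}$, and $\hat\gamma^{-1}\cH^1_n\hat\gamma \subset \cG^1_n$ because $\cH^1_n \subset \cG^1_n$ and $\cG^1_n$ is normal in $\cG$.

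The first bullet, however, contains a genuine logical error. You open by asserting that ``$\gamma \in M_G$ ... so conjugation by $\gamma$ preserves $T$'', and you use this to pass to the purported identity $\cT \cap \gamma\cM^\square_{G,n}\gamma^{-1} = \gamma(\cT\cap\cM^\square_{G,n})\gamma^{-1}$. But $\gamma$ corresponds to the unipotent element $\stbt{1}{0}{1}{1}$ in the $\GL_2$-factor of $M_G$; it does \emph{not} normalize the maximal torus, and the displayed identity is false. The subsequent claim that ``$\gamma(\cT\cap\cM^\square_{G,n})\gamma^{-1}$ is exactly the set of diagonal $\diag(t_1,t_2,\nu t_2^{-1}, \nu t_1^{-1}) \in \cT^\square_n$ with $t_1-t_2 \in \cB_n$'' compounds this: $\gamma\,\cT^\square_n\,\gamma^{-1}$ consists of matrices of the shape $\stbt{t_1}{0}{t_1-t_2}{t_2}$ in the $\GL_2$-factor, which are lower-triangular and not diagonal, so this set is not contained in $\cT$ at all, let alone equal to $\cT^\ds_n$.

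The correct argument — and the one the paper gives — requires none of this group-theoretic manipulation. You already did the decisive computation in the middle of your backtrack: $\gamma^{-1}\tau\gamma = (\stbt{t_1}{0}{t_2-t_1}{t_2}, \nu)$ for $\tau = (\stbt{t_1}{0}{0}{t_2}, \nu) \in \cT$. Now simply unwind the definition: $\tau \in \cT \cap \gamma\cM^\square_{G,n}\gamma^{-1}$ means $\tau \in \cT$ \emph{and} $\gamma^{-1}\tau\gamma \in \cM^\square_{G,n}$. By the explicit description of $\cM^\square_{G,n}$ in Note~\ref{note:identifyMGsq}, the latter holds if and only if $t_1, t_2, \nu \in \Zp^\times(1+\cB_n)$ and $t_2-t_1 \in \cB_n$, which is precisely the definition of $\cT^\ds_n$. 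So the essential ingredient is present in your draft, but it is wrapped in a false framing; stripping that framing away and applying the computation directly is both necessary and sufficient.
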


  \begin{proof}
   If $\tau = (\stbt{t_1}{}{}{t_2}, \nu)$ is an element of $\cT$ then $\gamma^{-1} \tau \gamma = ( \stbt{t_1}{}{t_2-t_1}{t_2}, \nu)$. It is now clear that $\gamma^{-1}T_{\ds}(p^t) \gamma \subset \Iw_{M_G}(p^t)$ and $\gamma^{-1}\cT^1_{n} \gamma \subset \cM^1_{G,n}$, so the required inclusion follows.

   The second statement can be verified similarly; the inclusion on $\Zp$-points is the definition of $K^H_{\ds}$, and the inclusion on $\cH^1_{n}$ follows from the fact that $\cH^1_{n} \subset \cG^1_n$ and $\cG^1_n$ is normal in $\cG$.
  \end{proof}


  \begin{proposition}\label{prop:pullbackcomp}
   We have an equality of $\cM^\square_{G, n}$-torsors over $\cU^H_{n,\ds}$:
   \[ \hat\iota^*\left( \cM^G_{\HT,n,\Iw}\right) = \cM^H_{\HT,n,\ds}\times^{\left[\cT^\ds_{n}, \gamma\right]} \cM^\square_{G,n}, \]
   where we regard $\cT^{\ds}_{n}$ as a subgroup of $\Iw_{M_G}(p^t)\cM^1_{G,n}$ via conjugation by $\gamma$.
  \end{proposition}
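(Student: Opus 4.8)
The plan is to realise both sides as reductions of structure of the single $\cM_G$-torsor $\hat\iota^*\cM^G_{\HT}$, and to check that they coincide by an explicit computation on the big Bruhat cells. Since a morphism of torsors under a fixed group is automatically an isomorphism, it suffices to produce a natural morphism of $\cM^\square_{G,n}$-torsors between the two sides. Note that $n\ge 1$ is in force (as in \cref{prop:redofstr}), so that $\tU^G_n$ and $\tU^H_n=\hat\iota^{-1}(\tU^G_n)$ lie inside the big cells $U^G_{w_1}$ and $U^H_{\id}$, on which $\tM^G$ and $\tM^H$ carry tautological trivialising sections $s_G$, $s_H$.

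I would begin at infinite (perfectoid) level, where $\cM^G_{\HT}$ and $\cM^H_{\HT}$ are literally the pullbacks of $\tM^G$, $\tM^H$ along the Hodge--Tate maps, so that \cref{thm:HTmaps} gives $\hat\iota^*\cM^G_{\HT}=(\pi^H_{\HT})^*(\hat\iota^*\tM^G)$. On flag varieties $\hat\iota$ is the composite of $\iota_*\colon\tFL^H\to\tFL^G$ (induced by $H\into G$, using $\iota(N_H)\subset N_G$ and $\iota(T_H)=T$) with right-translation $R_{\hat\gamma}$ by $\hat\gamma=\gamma w_1$; combining the torsor map $\tM^H\to\iota_*^*\tM^G$ with the canonical isomorphism $\tM^G\xrightarrow{\sim}R_{\hat\gamma}^*\tM^G$ given by left-translation by $\hat\gamma^{-1}$ produces a morphism $\Psi\colon\tM^H\to\hat\iota^*\tM^G$ over the inclusion $\cM_H=T_H\into\cM_G$. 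Pulling back along $\pi^H_{\HT}$ this already yields the ``unrefined'' identity $\hat\iota^*\cM^G_{\HT}\cong\cM^H_{\HT}\times^{[\cT,\iota]}\cM_G$; the content of the proposition is the refinement to the reductions of structure, where the constant twist by $\gamma$ enters.

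The heart of the matter is thus the comparison of the two sections $\hat\iota^* s_G$ and $\Psi(s_H)$ of $\hat\iota^*\tM^G$ over $\tU^H_n$. By \cref{prop:redofstr} (following \bp{\S 6.2.1}), the reductions $\cM^G_{\HT,n,\Iw}$ over $\cU^G_n$ and $\cM^H_{\HT,n,\ds}$ over $\cU^H_n$ are obtained by descending the $\cM^\square_{G,n}$-orbit of $s_G$, resp.\ the $\cT^\ds_n$-orbit of $s_H$ -- these orbits descend because $K^G_{\Iw}(p^t)$ (resp.\ $K^H_{\ds}(p^t)$) moves the tautological section through $\cM^\square_{G,n}$ (resp.\ $\cT^\ds_n$). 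Feeding the explicit coordinate formula \eqref{eq:gamma-param} for $\hat\iota$ into the matrix descriptions of $\bar N_G$ and $\bar N_H$, and using $w_1\hat\gamma^{-1}=\gamma^{-1}$, a short matrix computation gives
\[ \hat\iota^* s_G = \Psi(s_H)\cdot\gamma. \]
Granting this, the assignment $[(s_H\cdot\tau,\,m)]\mapsto\Psi(s_H\cdot\tau)\cdot\gamma m$ is well-defined on $\cM^H_{\HT,n,\ds}\times^{[\cT^\ds_n,\gamma]}\cM^\square_{G,n}$, is a morphism of $\cM^\square_{G,n}$-torsors, and has image exactly the $\cM^\square_{G,n}$-orbit of $\hat\iota^* s_G$, namely $\hat\iota^*\cM^G_{\HT,n,\Iw}$; here the first part of the preceding Lemma, $\cT^\ds_n=\cT\cap\gamma\cM^\square_{G,n}\gamma^{-1}$, is exactly what guarantees that $(\gamma^{-1}\cT^\ds_n\gamma)\cdot\cM^\square_{G,n}=\cM^\square_{G,n}$.

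Finally one checks that this infinite-level identification descends compatibly to the finite levels $K^G_{\Iw}(p^t)$, $K^H_{\ds}(p^t)$; this is precisely the second part of the preceding Lemma, $\hat\gamma^{-1}K^H_{\ds}(p^n)\cH^1_n\hat\gamma\subset K^G_{\Iw}(p^n)\cG^1_n$, together with the equivariance of the Hodge--Tate maps and the Cartesian squares \eqref{eq:Cartadic2}, \eqref{eq:adicSh} (which also ensure $\hat\iota$ carries $\cU^H_n$ into $\cU^G_n$). I expect the main obstacle to be organisational rather than conceptual: pinning down the exact shape of the reductions of structure from \bp{\S 6.2.1} (in particular that they are the descended big-cell orbits) and then carrying the three independent parameters -- the disc radii, the level depth $t$, and the two conjugating elements $\gamma$ and $w_1$ -- consistently through the identification. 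The element $\gamma$ itself is not mysterious: it is forced by, and read off directly from, the coordinate formula \eqref{eq:gamma-param}.
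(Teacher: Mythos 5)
Your proposal is correct and follows essentially the same route as the paper's own argument: both reduce the statement to the flag varieties, both hinge on the preceding Lemma (the $\gamma$-conjugation inclusion $\cT^\ds_n = \cT \cap \gamma\cM^\square_{G,n}\gamma^{-1}$ and $\hat\gamma^{-1}K^H_{\ds}\cH^1_n\hat\gamma \subset K^G_{\Iw}\cG^1_n$), and both locate the role of $\gamma$ in the same place, namely in making the comparison of the two structure groups compatible with the ambient embedding conjugated by $\hat\gamma$. The only difference is organisational: you phrase the flag-variety check in terms of tautological sections of $\tM^G$, $\tM^H$ over the big cells and an explicit identity $\hat\iota^*s_G = \Psi(s_H)\cdot\gamma$, whereas the paper works directly with the total spaces $K^H_\ds(p^t)\cH^1_n$ and $K^G_{\Iw}(p^t)\cG^1_n$ regarded as $P$-torsors via $g\mapsto g^{-1}$ and then passes to $N$-coinvariants at the end; these encode the same computation.
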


  \begin{proof}
   We check the analogous statement on the flag varities. We first observe that we have a commutative diagram of adic spaces
   \[
    \begin{tikzcd}
     K^H_{\ds}(p^t)\cH^1_{n} \dar\rar & K^G_{\Iw}(p^t)\cG^1_{n}\dar\\
     \cB^H\backslash \cB^H K^H_{\ds}(p^t)\cH^1_{n}\rar[hook, "\hat\iota"]
     & \cP^G\backslash \cP^G w_1K^G_{\Iw}(p^t)\cG^1_{n}.
    \end{tikzcd}
   \]
   Here, the vertical maps are given by $h\mapsto \cB^H\backslash \cB^H h^{-1}$ on the left, and $g\mapsto \cP^G\backslash \cP^G w_1g^{-1}$ on the right; the lower horizontal map $\hat\iota$ is $\cB^H h\mapsto \cP^G h \gamma w_1$, and the map along the top making the diagram commute is $h \mapsto \hat\gamma^{-1} h\hat\gamma$, which is well-defined by the preceding lemma. (Note that the commutativity of the diagram relies on the fact that $\gamma \in P_G$.)

   The right-translation action of $\cB^H$ on $\cG$ makes the left-hand column into a torsor for the group $\cB^H \cap K^H_{\ds}(p^t)\cH^1_{n}$. Similarly, via right-translation conjugated by $w_1$, $K^G_{\Iw}(p^t)\cG^1_{n}$ becomes a torsor for the group
   \(  \cP\cap  w \cG^1_n K^G_{\Iw}(p^t)w^{-1}; \)
   and these structures are compatible if we consider $\cB^H \cap K^H_{\ds}(p^t)\cH^1_{n}$ as a subgroup of $\cP \cap w \cG^1_n K^G_{\Iw}(p^t)w^{-1} $ via conjugation by $\gamma$.

   Passing to the $\cN_H$-coinvariants on the left, we obtain a torsor for $T_\ds(p^t) \cT^1_{n} = \cT^\ds_n$; and passing to $\cN_G$-coinvariants on the right, we obtain a torsor for the projection of $\cP \cap w \cG^1_n K^G_{\Iw}(p^t)w^{-1}$ to the Levi $\cM_G$, which is the group $\cM^\square_{G, n}$. Moreover, these structures are compatible via the $\gamma$-conjugation inclusion $\cT^\ds_n \into \cM^\square_{G,n}$ established in the above lemma.

   We now note that $\tU^G_n \subset \tFL^G$ is contained in the subset $\cP^G w_1K^G_{\Iw}(p^t)\cG^1_{n}$, since $\tU^G_n$ is the orbit of $w_1$ under $K^G_{\Iw}(p^t) \cG^1_{n, n}$ in the notation of \bp{\S 3.3.3}, and $\cG^1_{n, n} \subset \cG^1_n$. So pulling back to $\cU^G_n$ via the Hodge--Tate period map gives the result.
  \end{proof}

\section{Spaces of distributions and branching laws}

 \subsection{Analytic characters}

  \begin{definition}
   Let $n \in \QQ_{> 0}$. We say a continuous character $\kappa: \Zp^\times \to A^\times$, for $(A, A^+)$ a complete Tate algebra, is \textbf{$n$-analytic} if it extends to an analytic $A$-valued function on the affinoid adic space
   \[ \Zp^\times \cdot \cB_n \subset \mathbf{G}_m^{\mathrm{ad}}.\]
   This definition extends naturally to characters $T(\Zp) \to A^\times$: the $n$-analytic characters are exactly those which extend to $\cT^\square_n$.
  \end{definition}

  \begin{remark}
   For compatibility with our notations for algebraic weights, we shall denote a $p$-adic character $\kappa$ of $T(\Zp)$ by a triple $(\rho_1, \rho_2; \omega)$ of characters of $\Zp^\times$, via
   \[ \kappa(\diag(st_1, st_2, st_2^{-1}, st_1^{-1})) = \rho_1(t_1) \rho_2(t_2) \omega(s), \]
   so that formally
   \[ \kappa(\diag(t_1, t_2, \nu t_2^{-1}, \nu t_1^{-1})) = \rho_1(t_1) \rho_2(t_2) \left(\frac{\omega}{\rho_1\rho_2}\right)^{\tfrac{1}{2}}(\nu).\]
   This is of course not well-defined as written, since $p$-adic characters do not have a unique square root, so we should understand the triple $(\rho_1, \rho_2; \omega)$ as coming with an implicit choice of square root of $\omega /\rho_1 \rho_2$ which is being suppressed from the notation.
  \end{remark}

 \subsection{Analytic inductions}

  We recall some definitions from \bp{\S 6.1.2}. Let $(A,A^+)$ be a complete Tate algebra over $(\Qp,\Zp)$. Let $n_0 > 0$, and assume that $\kappa_A: T(\Zp)\rightarrow A^\times$ is an $n_0$-analytic character.  For $?\in \{G,H\}$ and $n \geq n_0$, let $\cM^1_{?,n}$ be the affinoid subgroup of $\cM_?$ defined above, and let $B_{M_G}$ be the Borel of $M_?$.

  \begin{definition}
   For $n\geq n_0$, define
   \begin{align*}
    V^{n-\an}_{G,\kappa_A}=&\, \an\Ind^{\left(\cM^\square_n\right) }_{\left(\cM^\square_n \cap \cB_{G}\right)}(w_{0,M_?}\kappa_A)\\
    =&\, \Big\{ f \in \cO(\cM^\square_{G, n}) \htimes A :  f(mb)=(w_{0,M}\kappa_A)(b^{-1})f(m),\\
     &\quad \forall \, m\in \cM^\square_{G, n},\, \forall b\in \cM^\square_{G, n} \cap \cB_G \Big\}.
   \end{align*}
   We define a left action of $\cM^\square_{G, n}$ on $V^{n-\an}_{G,\kappa_A}$ by $(h \cdot f)(m) = f(h^{-1} m)$.

   Write $D^{n-\an}_{G,\kappa_A}$ for the dual space, and $\langle -, - \rangle$ for the pairing between these; we equip $D^{n-\an}_{G,\kappa_A}$ with a left action of the same group $\cM^\square_{G, n}$, in such a way that $\langle h\mu , hf \rangle = \langle \mu ,f \rangle$.
  \end{definition}

  Let us describe $V_{G,\kappa_A}^{n-\an}$ explicitly. We use the description of $\cM_{G, n}^{\square}$ given in \cref{note:identifyMGsq}. Since $\cM_{G, n}^{\square}$ has an Iwahori decomposition, restriction to elements of the form $(\stbt{1}{}{\star}{1}, 1)$ identifies $V_{G,\kappa_A}^{n-\mathrm{an}}$ with the space of analytic functions of $z \in \cB_n$; this space is independent of $\kappa_A$, but the action of $\cM^{\square}_{G,n}$ does depend on $\kappa_A$, as follows.

  \begin{propqed}
   Suppose $\kappa_A$ is the character $(\rho_1, \rho_2; \omega)$. Then the action of $(\stbt a b c d, \nu)$ on $f \in \cO(\cB_n) \htimes A$ is given by
   \[ \left((\stbt a b c d, \nu) f\right)(z) = f\left( \frac{az-c}{-bz + d}\right) (-bz + d)^{\rho_1 - \rho_2} (ad-bc)^{\rho_2} \nu^{(\omega-\rho_1-\rho_2)/2}.\qedhere\]
  \end{propqed}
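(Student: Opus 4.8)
The plan is to prove the formula by reducing to the Iwahori factorisation of $\cM^\square_{G,n}$ together with one elementary $\GL_2$ computation. First I would make explicit the identification recalled just before the statement: using the description of $\cM^\square_{G,n}$ in \cref{note:identifyMGsq}, every element factors uniquely as $\bigl(\stbt{1}{}{z}{1},1\bigr)\,b$ with $z\in\cB_n$ and $b$ in the Borel part $\cM^\square_{G,n}\cap\cB_G$ — because the top-left entry is always a unit, one can write $\stbt{x}{y}{z}{w}=\stbt{1}{}{z/x}{1}\stbt{x}{y}{0}{w-yz/x}$ and absorb the $\GL_1$-component into the second factor. Hence restriction to the opposite unipotent $\overline{N}_n=\{n(z):=\bigl(\stbt{1}{}{z}{1},1\bigr):z\in\cB_n\}$ gives the isomorphism $V^{n-\an}_{G,\kappa_A}\xrightarrow{\ \sim\ }\cO(\cB_n)\htimes A$, and the action formula is obtained by transporting the left-translation action through it.

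So, for $g=\bigl(\stbt{a}{b}{c}{d},\nu\bigr)\in\cM^\square_{G,n}$ and $z\in\cB_n$, I would compute $(g\cdot f)(z)=f\bigl(g^{-1}n(z)\bigr)$ by rewriting $g^{-1}n(z)$ in the form $n(z')\,b_0$. A short matrix calculation gives $z'=\tfrac{az-c}{-bz+d}$, with $b_0$ upper-triangular having $\GL_2$-diagonal $\bigl(\tfrac{-bz+d}{ad-bc},\tfrac{1}{-bz+d}\bigr)$ and $\GL_1$-component $\nu^{-1}$. Applying the defining transformation law $f(n(z')b_0)=(w_{0,M}\kappa_A)(b_0^{-1})f(n(z'))$, it then remains to evaluate $(w_{0,M}\kappa_A)(b_0^{-1})$: here one uses that $w_{0,M}=s_2$ swaps the two $\GL_2$-torus coordinates, and that in the $(A,\nu)$-parametrisation fixed in the remark above, $\kappa_A=(\rho_1,\rho_2;\omega)$ restricts to the torus of $M_G$ as $\bigl(\diag(a_1,a_2),\nu\bigr)\mapsto a_1^{\rho_1}a_2^{\rho_2}\nu^{(\omega-\rho_1-\rho_2)/2}$. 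Feeding in $b_0^{-1}$ gives exactly $(-bz+d)^{\rho_1-\rho_2}(ad-bc)^{\rho_2}\nu^{(\omega-\rho_1-\rho_2)/2}$, and transporting back through the first step produces the asserted formula.

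The only step requiring genuine care, rather than routine algebra, is checking that this factorisation actually takes place inside the affinoid group $\cM^\square_{G,n}$ — i.e. that $z'\in\cB_n$ and $b_0\in\cM^\square_{G,n}\cap\cB_G$ — so that the transformation law may legitimately be invoked and the output lands in $\cO(\cB_n)\htimes A$. This again follows from \cref{note:identifyMGsq}: $a,d\in\Zp^\times(1+\cB_n)$, $b\in\Zp+\cB_n$ and $c,z\in\cB_n$, so $-bz+d\in\Zp^\times$ and $az-c\in\cB_n$, whence $z'\in\cB_n$, and similar estimates handle $b_0$. Beyond this, the remaining difficulty is purely notational: keeping the $w_{0,M}$-twist and the implicit square root of $\omega/\rho_1\rho_2$ aligned with the conventions already fixed in the preceding remarks.
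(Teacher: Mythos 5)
Your proposal is correct, and it spells out exactly the verification the paper leaves implicit: the statement is recorded as a \texttt{propqed} (asserted as immediate from the definitions, with no separate proof given). The ingredients you use — restriction to the opposite unipotent via the Iwahori decomposition of $\cM^\square_{G,n}$, the factorisation $g^{-1}n(z)=n(z')b_0$, evaluation of $(w_{0,M}\kappa_A)(b_0^{-1})$ using the convention $\kappa_A(\diag(a_1,a_2),\nu)=a_1^{\rho_1}a_2^{\rho_2}\nu^{(\omega-\rho_1-\rho_2)/2}$ together with $w_{0,M}=s_2$ swapping $\rho_1$ and $\rho_2$, and the integrality check that $z'\in\cB_n$ and $b_0\in\cM^\square_{G,n}\cap\cB_G$ via \cref{note:identifyMGsq} — are precisely what the authors intend the reader to carry out. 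Your arithmetic for $z'$, for the diagonal of $b_0$, and for the resulting factor $(-bz+d)^{\rho_1-\rho_2}(ad-bc)^{\rho_2}\nu^{(\omega-\rho_1-\rho_2)/2}$ all check out.
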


  \begin{note}
   For $H$ in place of $G$, we can make the same definitions; but the resulting spaces are much simpler, since $\cM_H = \cT$ is commutative and contained in $\cB_H$. Hence any function $f \in V^{n-\an}_{H,\kappa_A}$ is uniquely determined by its value at 1. So $V^{n-\an}_{H,\kappa_A}$ is canonically $A$, with $\cT^\square_n$ acting via $\kappa_A$; and dually $D^{n-\an}_{H,\kappa_A}$ is $A$ with $\cT^\square_n$ acting via $\kappa_A^{-1}$.
  \end{note}

  \begin{note}
   If $\kappa_A$ is an algebraic character $(k_1, k_2; c)$, then $V_{G,\kappa_A}^{n-\an}$ naturally contains the algebraic $M_H$-representation of highest weight $\kappa_A$ (identified with polynomials in $z$ of degree $\le k_1 - k_2$); and dually, $D^{n-\an}_{?,\kappa_A}$ surjects onto the algebraic representation of highest weight $\kappa_A^\vee$ (the dual of the weight $\kappa_A$ representation).
  \end{note}
%
%
%


 \subsection{Branching laws in families}\label{ss:kraken}

  \begin{definition}
   Let $A$ be a Tate algebra endowed with an $n_0$-analytic character $\kappa_A: T(\Zp) \to A^\times$ as above, and additionally with a character $\lambda: (1 + \cB_n)^\times \to A^\times$. Define the \emph{kraken} to be the function
   \[ \mathscr{K}^{\lambda}(z)=\lambda(1+z),\]
   viewed as an element of $V_{G,\kappa_{A}}^{n-\an}$.
  \end{definition}

  \begin{lemma}
   The function $\mathscr{K}^{\lambda}$ is an eigenvector for $\gamma^{-1} \cT^\ds_n \gamma \subset \cM^\square_{G, n}$, with eigencharacter $w_{0, M} \kappa_A + (\lambda, -\lambda; 0)$.
  \end{lemma}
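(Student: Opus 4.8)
The plan is a direct computation using the explicit description of the action of $\cM^\square_{G,n}$ on $V^{n-\an}_{G,\kappa_A}$ from the displayed Proposition above, together with the conjugation-by-$\gamma$ formula of the preceding lemma. First I would put things in coordinates: write a general element of $\cT^\ds_n$ as $\tau=\diag(t_1,t_2,\nu t_2^{-1},\nu t_1^{-1})$, which under $M_G\cong\GL_2\times\GL_1$ is $(\stbt{t_1}{0}{0}{t_2},\nu)$. By the lemma immediately preceding the statement, $\gamma^{-1}\tau\gamma=(\stbt{t_1}{0}{t_2-t_1}{t_2},\nu)$, and by the earlier lemma this indeed lies in $\cM^\square_{G,n}$. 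Since $\tau\in\cT^\ds_n\subset\cT^\square_n$ we have $t_1,t_2\in\Zp^\times(1+\cB_n)$ and $t_1-t_2\in\cB_n$, hence $t_1/t_2\in 1+\cB_n$; in particular $\lambda(t_1/t_2)\in A^\times$ is defined, and $z\mapsto\tfrac{t_1}{t_2}(1+z)-1$ maps $\cB_n$ into itself, so every expression below is legitimate.

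Next I would feed $h=(\stbt{t_1}{0}{t_2-t_1}{t_2},\nu)$ and $f=\mathscr{K}^\lambda$ into the action formula. The M\"obius argument $\tfrac{az-c}{-bz+d}$ becomes $\tfrac{t_1 z-(t_2-t_1)}{t_2}=\tfrac{t_1(z+1)}{t_2}-1$, so that
\[ \mathscr{K}^\lambda\!\left(\tfrac{t_1(z+1)}{t_2}-1\right)=\lambda\!\left(\tfrac{t_1}{t_2}(1+z)\right)=\lambda(t_1/t_2)\,\mathscr{K}^\lambda(z) \]
by multiplicativity of $\lambda$ on $1+\cB_n$, while the automorphy factors contribute $t_2^{\rho_1-\rho_2}(t_1t_2)^{\rho_2}\nu^{(\omega-\rho_1-\rho_2)/2}=\rho_2(t_1)\rho_1(t_2)(\omega/\rho_1\rho_2)^{1/2}(\nu)$. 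Thus $h$ scales $\mathscr{K}^\lambda$ by $\lambda(t_1/t_2)\,\rho_2(t_1)\rho_1(t_2)\,(\omega/\rho_1\rho_2)^{1/2}(\nu)$, so in particular $\mathscr{K}^\lambda$ really is an eigenvector for $\gamma^{-1}\cT^\ds_n\gamma$.

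Finally I would recognise this scalar as the value at $\tau$ of a character. With $\kappa_A=(\rho_1,\rho_2;\omega)$ we have $w_{0,M}\kappa_A=(\rho_2,\rho_1;\omega)$, and $(\lambda,-\lambda;0)$ is the character $(\lambda,\lambda^{-1};1)$, so $w_{0,M}\kappa_A+(\lambda,-\lambda;0)=(\rho_2\lambda,\rho_1\lambda^{-1};\omega)$. Evaluating this at $\tau$ via the parametrisation convention in the remark on analytic characters (noting $(\rho_2\lambda)(\rho_1\lambda^{-1})=\rho_1\rho_2$) yields exactly $\lambda(t_1/t_2)\rho_2(t_1)\rho_1(t_2)(\omega/\rho_1\rho_2)^{1/2}(\nu)$, matching the scalar above, which is the assertion. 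I expect no genuine obstacle here: the computation is pure bookkeeping, and the only points needing care are the implicit choice of square root of $\omega/\rho_1\rho_2$ baked into the $(\rho_1,\rho_2;\omega)$ convention, and the verification that $t_1/t_2\in 1+\cB_n$ so that $\lambda(t_1/t_2)$ and the substitution are defined — which is precisely where the defining condition $t_1-t_2\in\cB_n$ of $\cT^\ds_n$ enters.
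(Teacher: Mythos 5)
Your proof is correct and is essentially identical to the paper's: both plug the conjugated diagonal element $\gamma^{-1}\tau\gamma=(\stbt{t_1}{0}{t_2-t_1}{t_2},\nu)$ into the displayed M\"obius-action formula and read off the eigencharacter. Your write-up is a touch more explicit about the bookkeeping (verifying $t_1/t_2\in 1+\cB_n$ and translating back to the $(\rho_2+\lambda,\rho_1-\lambda;\omega)$ character via the diagonal-torus parametrisation), but the underlying computation is the same one the paper carries out with $x,y$ in place of $t_1,t_2$.
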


  \begin{proof}
   We have $\gamma^{-1}\left( \stbt x {} {} y, \nu\right) \gamma=(\stbt x {} {-x+y} y, \nu)$.
   If this condition is satisfied, then (writing $\kappa = (\rho_1, \rho_2; \omega)$ as before) we have
   \[ (\stbt x {} {-x+y} y, \nu) \mathscr{K}^{\lambda}(z)=  x^{\rho_2}y^{\rho_1}\nu^{(\omega-\rho_1-\rho_2)/2} \mathscr{K}^{\lambda}\left( \frac{x}{y}(z+1) - 1 \right)=x^{\rho_2+\lambda}y^{\rho_1-\lambda} \nu^{(\omega-\rho_1-\rho_2)/2}\mathscr{K}^{\lambda}(z).\qedhere \]
  \end{proof}

  As an immediate consquence, we obtain the following result:

  \begin{propqed}\label{prop:krakenpower}
   Pairing with the element $\mathscr{K}^{\lambda}$ defines a homomorphism of $\cT^\ds_n$-representations
   \[ \hat\iota^*( D^{n-\an}_{G,\kappa_A}) \longrightarrow D^{n-\an}_{H,w_{0, M} \kappa_A + (\lambda, -\lambda;0)}.\qedhere\]
  \end{propqed}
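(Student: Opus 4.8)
The plan is to read off the statement from the lemma immediately above, together with the explicit description of the distribution spaces for $H$. Set $\theta := w_{0, M}\kappa_A + (\lambda, -\lambda; 0)$; since $\kappa_A$ is $n_0$-analytic (with $n \ge n_0$) and $\lambda$ is $n$-analytic, $\theta$ is an $n$-analytic character of $T(\Zp)$, so $D^{n-\an}_{H,\theta}$ is defined. By the Note recording that $D^{n-\an}_{H,-}$ is canonically $A$, the space $D^{n-\an}_{H,\theta}$ is identified with $A$, with $\cT^\square_n$ --- hence also its subgroup $\cT^\ds_n$ --- acting through $\theta^{-1}$. Thus the claim reduces to exhibiting an $A$-linear map $\hat\iota^*\bigl(D^{n-\an}_{G,\kappa_A}\bigr) \to A$ which intertwines the action of $\cT^\ds_n$ on the source --- namely the restriction of the $\cM^\square_{G,n}$-action along the embedding $\tau \mapsto \gamma^{-1}\tau\gamma$ of $\cT^\ds_n$ into $\cM^\square_{G,n}$ (as in the lemma above) --- with the scalar action of $\cT^\ds_n$ on $A$ through $\theta^{-1}$.

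I would take this map to be $\mu \mapsto \langle \mu, \mathscr{K}^\lambda\rangle$. It is $A$-linear, since $\langle -, -\rangle$ is $A$-bilinear and $\mathscr{K}^\lambda$ is the fixed element $z \mapsto \lambda(1+z)$ of $V^{n-\an}_{G,\kappa_A}$ (which, as a module, is identified with $\cO(\cB_n)\htimes A$). For equivariance, fix $\tau \in \cT^\ds_n$ and put $h = \gamma^{-1}\tau\gamma \in \cM^\square_{G,n}$. The defining relation $\langle h\mu, hf\rangle = \langle \mu, f\rangle$ of the action on distributions gives $\langle h\mu, \mathscr{K}^\lambda\rangle = \langle \mu, h^{-1}\mathscr{K}^\lambda\rangle$, and the lemma above says precisely that $h$ scales $\mathscr{K}^\lambda$ by $\theta(\tau)$, so $h^{-1}\mathscr{K}^\lambda = \theta(\tau)^{-1}\mathscr{K}^\lambda$ and hence
\[ \langle h\mu, \mathscr{K}^\lambda\rangle = \theta(\tau)^{-1}\,\langle \mu, \mathscr{K}^\lambda\rangle. \]
Since $\tau$ acts on the source via $h$ and on $A = D^{n-\an}_{H,\theta}$ by the scalar $\theta(\tau)^{-1}$, this is exactly the required intertwining property, so $\mu \mapsto \langle \mu, \mathscr{K}^\lambda\rangle$ is a morphism of $\cT^\ds_n$-representations.

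I do not expect any genuine obstacle: the content is already in the preceding lemma, and what remains is bookkeeping. The two points deserving a little care are: (i) matching the eigenvalue $x^{\rho_2 + \lambda}y^{\rho_1 - \lambda}\nu^{(\omega - \rho_1 - \rho_2)/2}$ computed in the proof of that lemma (for $\tau = (\stbt{x}{}{}{y}, \nu)$ and $\kappa_A = (\rho_1, \rho_2; \omega)$) with $\theta(\tau)$ under our parametrisation of characters of $T$ by triples --- a direct check from the conventions already fixed; and (ii) keeping straight that $\hat\iota^*$ here means restriction along $\tau \mapsto \gamma^{-1}\tau\gamma$, not the opposite conjugation, which is forced by the identification $\cT^\ds_n = \cT \cap \gamma\,\cM^\square_{G,n}\,\gamma^{-1}$ recorded earlier. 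One could further note that the map is surjective, as the functional ``coefficient of $z^0$'' is sent to $\mathscr{K}^\lambda(0) = 1$, a generator of $A$; but this is not part of the statement.
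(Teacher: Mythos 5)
Your argument is correct and matches the paper's intent exactly: the paper labels this as a \emph{propqed} whose content is declared to be ``an immediate consequence'' of the preceding kraken-eigenvector lemma, and what you have written out is precisely the bookkeeping that this claim suppresses. Your verification that the eigenvalue $x^{\rho_2+\lambda}y^{\rho_1-\lambda}\nu^{(\omega-\rho_1-\rho_2)/2}$ from the lemma agrees with $\theta(\tau)$ under the paper's conventions for parametrising characters of $T$, and your care about the direction of $\gamma$-conjugation in identifying $\cT^\ds_n$ as a subgroup of $\cM^\square_{G,n}$, are exactly the two places where a sign or convention error could hide, and you have them both right.
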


  \begin{note}
   Note that $D^{n-\an}_{H,w_{0, M} \kappa_A + (\lambda, -\lambda;0)}$ is one-dimensional (and independent of $n$).
  \end{note}

  We now consider a special case. Let $A = \Qp$ and take $\kappa_A$ to be the algebraic weight $(r_2 + 2, -r_1; -r_1-r_2)$, for some integers $r_1 \ge r_2\ge 0$, so that $\kappa_A^\vee$ is the weight $\kappa_1$ of \eqref{eq:ourweights}. If we choose $\lambda$ to be an integer in the range $[0, r_1 + r_2 + 2]$, then $\mathscr{K}^\lambda$ lies in the polynomial subspace $V_{G, \kappa_A} \subset V_{G, \kappa_A}^{n-\an}$. Its value at the identity element of $\cM_{G, n}^{\square}$ is 1, by definition.

  So, if $t_i \ge -1$ are integers with $t_1 + t_2 = r_1 - r_2 - 2$, and we we take $\lambda$ such that $(r_1 - \lambda, \lambda - 2 - r_2) = (t_1, t_2)$, then we obtain a commutative diagram of $\cT^\ds_n$-representations
  \[\begin{tikzcd}
   \iota^*(D^{n-\an}_{G,\kappa_A}) \rar \dar &D^{n-\an}_{H,-\tau_A} \dar["\cong"] \\
   \iota^*\left(V_{G,\kappa_A^\vee}\right) \rar &V_{H, \tau_A}
  \end{tikzcd}\]
  where $\tau_A^\vee = (t_1, t_2; r_1 + r_2)$. Hence the homomorphism of Proposition \ref{prop:krakenpower} is compatible with the classical branching law described in \cref{sect:branchcoeffs}


\section{Sheaves of distributions}

 We use the above function spaces and morphisms as ``models'' for sheaves on the Shimura variety.

 \subsection{Labelling of weights}
  \label{ss:analyticweights}

  We recall some definitions from \bp{\S 6.2} (this theory is a bit messy owing to the need to reconcile various different conventions).

  As above, we let $(A,A^+)$ be a Tate algebra over $(\Qp,\Zp)$. Given a weight $\nu_A : T(\Zp) \to A^\times$ for some coefficient ring $A$, following \bp{\S 6}, we define $\kappa_A: T(\Zp) \to A^\times$ by
  \[ \kappa_A = -w_{0, M} w_1 (\nu + \rho) - \rho. \]
  Explicitly, if $\nu_A$ is $(\nu_1, \nu_2; \omega)$ for some $\nu_i, \omega: \Zp^\times \to A^\times$, then
  \[ \kappa_A = (\nu_2 - 1, -3-\nu_1; -\omega).\]

  We are not so much interested in the linear dual $\kappa_A^\vee$ as the ``Serre dual'' $\kappa_A' = (\kappa_A + 2\rho_{nc})^\vee$. Explicitly this is $(\nu_1, -2-\nu_2; c) = w_1(\nu_A+ \rho) - \rho$. So when $A = \Qp$ and $\nu = (r_1, r_2; r_1 + r_2)$ is an integral algebraic weight, we have $\kappa_A' = \kappa_1$ in the notation of \eqref{eq:ourweights}.


 \subsection{Sheaves on $G$}

  Let $1 \le n < t$ be integers.

  \begin{definition}
   We now define two sheaves $\cV^{n-\an}_{G,\nu_A}$ and $\cD^{n-\an}_{G,\nu_A}$ over $\cU^G_n$. The former can be defined as a subsheaf of $\pi_* (\cM^G_{\HT, n,\Iw})$ transforming like functions in $V^{n-\an}_{\kappa_A}$; an alternative, possibly cleaner description is as a coproduct
   \[ \cV^{n-\an}_{G, \nu_A} = \cM^G_{\HT, n,\Iw} \times^{\cM^{\square}_{G, n}} V^{n-\an}_{G, \kappa_A},\]
   and similarly
   \begin{equation}\label{eq:Dsheafascofibreprod}
    \cD^{n-\an}_{G, \nu_A} = \cM^G_{\HT, n,\Iw} \times^{\cM^{\square}_{G, n}} D^{n-\an}_{G, (\kappa_A + 2\rho_{nc})}.
   \end{equation}
  \end{definition}

  (The shift by $2 \rho_{nc}$ is present so that the pairing between $\cD^{n-\an}_{G, \nu_A}$ and $\cV^{n-\an}_{G, \nu_A}$ lands in the dualizing sheaf of $\cS_G$, rather than in the structure sheaf.)

  \begin{lemma}\label{lem:sheafspec}
   The sheaves $\cV^{n-\an}_{G,\nu_A}$ and $\cD^{n-\an}_{G,\nu_A}$ are sheaves of $A$-modules, whose formation is compatible with base-change in $A$; and if $A = \Qp$ and $\nu_A = (r_1, r_2; c)$ for integers $r_1 \ge r_2 \ge -1$, we have classical comparison maps
   \[  \cV_{G, \kappa_A} \into \cV^{n-\an}_{G, \nu_A},
   \qquad
   \cD^{n-\an}_{G, \nu_A} \onto \cV_{G,(\kappa_A + 2\rho_{nc})^\vee} = \cV_{G, \kappa_1}. \]
  \end{lemma}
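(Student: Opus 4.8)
The plan is to deduce everything from the fact that both $\cV^{n-\an}_{G,\nu_A}$ and $\cD^{n-\an}_{G,\nu_A}$ are, by definition, associated bundles of one and the same torsor: writing $\kappa_A$ for the weight attached to $\nu_A$ in \cref{ss:analyticweights}, they are the contracted products $\cM^G_{\HT,n,\Iw}\times^{\cM^\square_{G,n}} V^{n-\an}_{G,\kappa_A}$ and $\cM^G_{\HT,n,\Iw}\times^{\cM^\square_{G,n}} D^{n-\an}_{G,(\kappa_A+2\rho_{nc})}$ over $\cU^G_n$, where $\cM^G_{\HT,n,\Iw}$ is the \'etale $\cM^\square_{G,n}$-torsor of \cref{prop:redofstr}(1). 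This torsor does not involve $A$ and is trivial \'etale-locally on $\cU^G_n$, so the functor $\cM^G_{\HT,n,\Iw}\times^{\cM^\square_{G,n}}(-)$ is exact in the coefficient module and commutes with base change in $A$; hence the first assertion of the lemma reduces to the analogous (standard) statement for the coefficient modules themselves. For those I would simply cite the explicit description of \bp{\S 6.1.2} recalled above: restriction to the $z$-coordinate identifies $V^{n-\an}_{G,\kappa_A}$ with $\cO(\cB_n)\htimes A$ as an $A$-module independently of $\kappa_A$, the group $\cM^\square_{G,n}$ acts $A$-linearly through the function-of-$z$ factor only, and the action formula of the Proposition above is built from the matrix entries and from the character $\kappa_A$, hence is visibly compatible with any morphism $A\to A'$ of complete Tate algebras; and $D^{n-\an}$ is the continuous $A$-linear dual. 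All of this is contained in \bp{\S 6.2}.

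For the comparison maps I would proceed in two steps. First, the weight bookkeeping: when $\nu_A=(r_1,r_2;c)$ is algebraic with $r_1\ge r_2\ge -1$, the recipe of \cref{ss:analyticweights} gives $\kappa_A=(r_2-1,-r_1-3;-c)$ and $\kappa_A+2\rho_{nc}=(r_2+2,-r_1;-c)$, both $M_G$-dominant, and a direct computation of dual weights yields $(\kappa_A+2\rho_{nc})^\vee=(r_1,-r_2-2;c)=\kappa_1$. Second, the sheaf-theoretic input: for algebraic $\kappa_A$ the algebraic $M_G$-representation of highest weight $\kappa_A$ (the polynomials in $z$ of degree $\le r_1+r_2+2$) embeds $\cM_G$-equivariantly into $V^{n-\an}_{G,\kappa_A}$, and dually $D^{n-\an}_{G,(\kappa_A+2\rho_{nc})}$ surjects $\cM_G$-equivariantly onto the algebraic representation of highest weight $(\kappa_A+2\rho_{nc})^\vee=\kappa_1$ (both statements recalled above); and over $\cU^G_n$, pushing the \'etale $\cM^\square_{G,n}$-torsor $\cM^G_{\HT,n,\Iw}$ out along $\cM^\square_{G,n}\into\cM_G$ recovers the restriction of the de Rham $\cM_G$-torsor $\cM^G_\dR$ that defines the classical automorphic vector bundles, by the comparison of \bp{\S 6.2.1}. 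Applying the exact associated-bundle functor to the two equivariant maps of $\cM_G$-representations then gives $\cV_{G,\kappa_A}|_{\cU^G_n}\into\cV^{n-\an}_{G,\nu_A}$ and $\cD^{n-\an}_{G,\nu_A}\onto\cV_{G,\kappa_1}$, with injectivity, resp.\ surjectivity, preserved; the cuspidal version is identical, since $\cO(-D)$ is pulled back from the base and untouched by the torsor construction.

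The main obstacle --- the only non-formal ingredient --- is the torsor comparison invoked in the second step, namely the identification over the tube $\cU^G_n$ of the pushout of the reduced Hodge--Tate torsor with the de Rham torsor: this is where the geometry of the Hodge--Tate period map $\pi^G_{\HT}$ and the precise choice of $\cU^G_n$ enter, and I would quote it from \bp{\S 6.2} rather than reprove it. Granting that, the remaining delicate point is purely notational: one must verify that the $-w_{0,M}$-twist built into the definition of $\kappa_A$ and the $+2\rho_{nc}$-shift built into $\cD^{n-\an}_{G,\nu_A}$ are exactly what make the algebraic sub-object equal to $\cV_{G,\kappa_A}$ and the algebraic quotient equal to $\cV_{G,\kappa_1}$ --- the dual-weight computation above, which has to be carried out with the conventions of \cref{ss:analyticweights} firmly in hand to avoid sign errors.
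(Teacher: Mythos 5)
Your proposal is essentially the same as the paper's, which simply cites \bp{Prop.~6.18}; what you have done is to unpack what that citation contains. The unpacking is accurate: the weight bookkeeping ($\kappa_A=(r_2-1,-r_1-3;-c)$, $\kappa_A+2\rho_{nc}=(r_2+2,-r_1;-c)$, $(\kappa_A+2\rho_{nc})^\vee=\kappa_1$) is correct, the reduction-to-coefficient-modules argument for base-change is the right formal observation, and you correctly flag the one genuinely non-formal ingredient — the comparison, over the tube $\cU^G_n$, between the pushed-out reduced Hodge--Tate torsor and the de Rham $\cM_G$-torsor that underlies the classical $\cV_\kappa$ — and choose to quote it from \bp{\S 6.2} rather than reprove it, which is precisely the content of the reference the paper gives. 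The only caveat worth flagging: that Hodge--Tate/de Rham torsor identification is a real theorem (not a formal pushout), so be careful not to present it as a tautology; your last paragraph already acknowledges this.
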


  \begin{proof}
   See \bp{Prop. 6.18}.
  \end{proof}

 \subsection{Sheaves on $H$}

  There are analogous constructions for sheaves for $H$. Here we use the element $\id \in {}^M W_H$ in place of $w_1$, and $w_{0, M_H}$ is the identity. So given an $n$-analytic character $\tau_A$, we define $\kappa_A^H = -\tau_A-2\rho_H$; and we set
  \[  \cV^{n-\an}_{H, \ds, \tau_A} = \cM^H_{\HT, n,\ds} \times^{\cT^{\ds}_n} V^{n-\an}_{H, \kappa_A^H},\]
  and
  \[ \cD^{n-\an}_{H, \ds, \tau_A} = \cM^G_{\HT, n, \ds} \times^{\cT^{\ds}_{n}} D^{n-\an}_{H,(\kappa_A^H + 2\rho_H)}.\]

  Thus $\cD^{n-\an}_{H,\ds,\tau_A}$ for a $\Qp$-valued algebraic character $\tau_A$ is simply (the restriction to $\cU_n^H$ of) the line bundle $\cV^H_{\tau_A}$. The same definitions make sense at Iwahori level, of course, giving line bundles $\cD^{n-\an}_{H,\Iw,\tau_A}$ and $\cV^{n-\an}_{H,\Iw,\tau_A}$. These sheaves are in fact independent of $n$ (in the sense that $\cD^{(n+1)-\an}_{H,\Iw,\tau_A}$ is isomorphic to the restriction of $\cD^{n-\an}_{H,\Iw,\tau_A}$ to $\cU_{n+1}^H$), so we shall frequently drop the $n$ and write simply $\cD^{\an}_{H,\Iw,\tau_A}$ etc.

  \begin{remark}\label{remark:bigsheafsmallt}
   Note that (for simplicity) we have only attempted to define the locally-analytic sheaves for $G$ when the level group at $p$ is $\Iw(p^t)$ with $t > n$; thus our functions are defined on $\cB_n$ itself, rather than on a union of translates of $\cB_n$. (This restriction on the levels is inherited from \bp{\S 6.3}.)

   However, for $H$ the technical difficulties disappear, and we can make sense of $\cV^{\an}_{H,\Iw,\tau_A}$ and $\cD^{\an}_{H,\Iw,\tau_A}$ as vector bundles on $\cU_{n, \Iw}^H(p^t)$ for any $n, t \ge 1$.
  \end{remark}

 \subsection{Branching for sheaves}

  \begin{definition}
   \label{def:compat}
   We say the $A$-valued, $n$-analytic characters $\nu_A$ and $\tau_A$ of $T(\Zp)$ are \emph{compatible} if $\nu_A = (\nu_1, \nu_2; \nu_1 + \nu_2)$, $\tau_A = (\tau_1, \tau_2; \nu_1 + \nu_2)$, for some characters $\nu_i, \tau_i$ of $\Zp^\times$, and we have the relation
   \[
    \tau_1 + \tau_2 = \nu_1 - \nu_2 - 2.
   \]
  \end{definition}

%
%

%
%
%

  Recall the kraken $\mathscr{K}^\lambda$ defined in \cref{ss:kraken}. If $\nu_A, \tau_A$ are compatible, then taking $\lambda = \nu_1 - \tau_1 = \nu_2 + \tau_2 + 2$, we obtain a homomorphism of $\cT^\ds_n$-representations
  \[ D^{n-\an}_{G,(\kappa_A + 2\rho_{nc})}
   \longrightarrow
   D^{n-\an}_{H,-\tau_A}\]
  where $\cT^{\ds}_n$ acts on $D^{n-\an}_{G,(\kappa_A + 2\rho_{nc})}$ via $\gamma$-conjugation. So the following result is an immediate consequence of \cref{prop:krakenpower} and the results of \cref{ss:analyticweights}:

  \begin{propqed}\label{prop:krakenonsheaves}
   Pairing with $\mathscr{K}^\lambda$ induces a morphism of sheaves over $\cU^H_{n}$:
   \[ \hat\iota^*( \cD^{n-\an}_{G,\nu_A})\longrightarrow \cD^{\an}_{H, \ds,\tau_A}.\]
   This morphism is compatible with specialisation in $A$, and if $A = \Qp$ and $\nu = (r_1, r_2; r_1+r_2)$, $\tau = (t_1, t_2; r_1 + r_2)$ are algebraic weights with $r_1 - r_2 \ge 0$ and $r_i, t_i \ge -1$, then this morphism is compatible with the map of finite-dimensional sheaves $\hat\iota^*\left(\cV_{\kappa_1}\right) \to \cV^H_{\tau}$ defined in \S\ref{sect:branchcoeffs}.
  \end{propqed}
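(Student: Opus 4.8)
The plan is to obtain the sheaf morphism by applying the associated-bundle (contracted-product) construction to the $\cT^\ds_n$-equivariant pairing of modules that \cref{prop:krakenpower} extracts from the kraken, after first rewriting the pullback of the $G$-torsor by means of \cref{prop:pullbackcomp}. Concretely, I would start from the coproduct presentations
\[
\cD^{n-\an}_{G,\nu_A} = \cM^G_{\HT,n,\Iw} \times^{\cM^\square_{G,n}} D^{n-\an}_{G,(\kappa_A + 2\rho_{nc})},
\qquad
\cD^{\an}_{H,\ds,\tau_A} = \cM^H_{\HT,n,\ds} \times^{\cT^\ds_n} D^{n-\an}_{H,-\tau_A},
\]
the latter using $\kappa_A^H + 2\rho_H = -\tau_A$. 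Restricting the first along $\hat\iota$ over $\cU^H_n$ and substituting the identification $\hat\iota^*(\cM^G_{\HT,n,\Iw}) = \cM^H_{\HT,n,\ds} \times^{[\cT^\ds_n,\gamma]} \cM^\square_{G,n}$ of \cref{prop:pullbackcomp}, transitivity of contracted products gives
\[
\hat\iota^*\big(\cD^{n-\an}_{G,\nu_A}\big) \;=\; \cM^H_{\HT,n,\ds} \times^{\cT^\ds_n} D^{n-\an}_{G,(\kappa_A + 2\rho_{nc})},
\]
where $\cT^\ds_n$ now acts on the module through its $\gamma$-conjugate embedding into $\cM^\square_{G,n}$.

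That is exactly the action appearing in the displayed homomorphism immediately preceding the proposition: with $\lambda = \nu_1 - \tau_1 = \nu_2 + \tau_2 + 2$, pairing against the kraken $\mathscr{K}^\lambda$ is a $\cT^\ds_n$-equivariant map $D^{n-\an}_{G,(\kappa_A + 2\rho_{nc})} \to D^{n-\an}_{H,-\tau_A}$. Applying the functor $\cM^H_{\HT,n,\ds}\times^{\cT^\ds_n}(-)$ to this map then produces the asserted morphism $\hat\iota^*\big(\cD^{n-\an}_{G,\nu_A}\big) \to \cD^{\an}_{H,\ds,\tau_A}$.

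Compatibility with specialisation in $A$ should be formal: the torsor identification of \cref{prop:pullbackcomp} is independent of $A$; the coproduct presentations of the $\cD$-sheaves are compatible with base change along $A \to A'$ by \cref{lem:sheafspec} and its evident $H$-analogue; and $\mathscr{K}^\lambda(z) = \lambda(1+z)$, hence the pairing, is manifestly defined over the base ring. For the final clause I would specialise to $A = \Qp$ with $\nu, \tau$ algebraic in the stated range, so that $\lambda$ is an integer in $[0, r_1+r_2+2]$ and $\mathscr{K}^\lambda$ lies in the polynomial subspace $V_{G,\kappa_A} \subset V^{n-\an}_{G,\kappa_A}$ with value $1$ at the identity. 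The commutative square of $\cT^\ds_n$-representations recorded at the end of \cref{ss:kraken} then identifies the kraken pairing, modulo the canonical surjections onto the algebraic quotients, with the classical branching homomorphism $V^G_{\kappa_1}|_{\gamma^{-1}M_H\gamma} \to V^H_\tau$ of \cref{sect:branchcoeffs}; applying $\cM^H_{\HT,n,\ds}\times^{\cT^\ds_n}(-)$ and invoking the surjections $\cD^{n-\an}_{G,\nu_A}\onto \cV_{G,\kappa_1}$ and $\cD^{\an}_{H,\ds,\tau_A}\onto \cV^H_\tau$ of \cref{lem:sheafspec}, this becomes the required compatibility with the sheaf map $\hat\iota^*(\cV_{\kappa_1}) \to \cV^H_\tau$ of \cref{sect:branchcoeffs}.

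I do not expect a genuine conceptual obstacle here, since \cref{prop:pullbackcomp} and \cref{prop:krakenpower} already carry all the substance; the one genuinely delicate point is the bookkeeping of twists. I would pay particular attention to checking that the $\gamma$-conjugation appearing in the torsor identification of \cref{prop:pullbackcomp} is literally the $\gamma$-conjugation under which $\mathscr{K}^\lambda$ is an eigenvector in \cref{prop:krakenpower} (so that gluing the module-level map into a map of associated bundles introduces no stray character), and that the shift by $2\rho_{nc}$ — built in precisely so that the $G$-side pairing lands in the dualizing sheaf rather than the structure sheaf — is propagated consistently to both sides, so that the target is indeed $\cD^{\an}_{H,\ds,\tau_A}$ in the normalisation of \cref{ss:analyticweights}.
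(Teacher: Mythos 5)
Your proposal is correct and follows exactly the paper's intended reasoning: the result is stated as a `propqed` (no separate proof) precisely because, as you carry out, it is the formal consequence of applying the contracted-product functor $\cM^H_{\HT,n,\ds}\times^{\cT^\ds_n}(-)$ to the kraken pairing of \cref{prop:krakenpower}, after rewriting $\hat\iota^*(\cM^G_{\HT,n,\Iw})$ via \cref{prop:pullbackcomp}, and the compatibility claims reduce to \cref{lem:sheafspec} and the commutative square at the end of \cref{ss:kraken}. Your care over matching the $\gamma$-conjugation in \cref{prop:pullbackcomp} with the one in \cref{prop:krakenpower}, and over the $2\rho_{nc}$ normalisation, is exactly the bookkeeping the paper elides.
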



 \subsection{Locally analytic overconvergent cohomology}

  Let $m,n,t$ be as in \cref{eq:mnt}, with $n > 0$; and suppose $\nu_A$ is an $n$-analytic $A$-valued character of $T(\Zp)$. We define cuspidal, locally analytic, overconvergent cohomology to be
  \begin{equation}\label{eq:cusplocanaoc}
   R\Gamma^G_{w, \an}(\nu_A, \cusp)^{-, \fs} = R\Gamma_{\cI_{mn}^G}\left(\cU^G_n, \cD^{n-\an}_{G,\nu_A}(-D_G)\right)^{-, \fs},
  \end{equation}
  and similarly for the non-cuspidal version. As shown in \bp{\S 6}, this complex is independent of $m$, $n$ and $t$, and is concentrated in degrees $[0, 1, 2]$.


  \begin{proposition}
   \label{prop:bigsheafpullback}
   Given $\nu_A$ and $\tau_A$ satisfying the compatibility condition of Definition \ref{def:compat}, we have a morphism of complexes of $A$-modules
   \[
    \hat\iota^*: R\Gamma^G_{w, \an}(\nu_A, \cusp)^{-, \fs}
    \to  R\Gamma_{\cZ^H_m}\left(\cU_n^H, \cD^{n-\an}_{H,\ds,\tau_A}(-D_H)\right).
   \]
  \end{proposition}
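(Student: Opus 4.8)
The plan is to assemble $\hat\iota^*$ from three pieces that are already available: the Cartesian square \eqref{eq:adicSh} of adic Shimura varieties, the sheaf homomorphism of \cref{prop:krakenonsheaves}, and the standard functoriality of cohomology with supports under pullback. This is the locally-analytic counterpart of the construction producing \eqref{eq:iota2} in the vector-bundle case, so the overall shape is dictated by that; the only genuinely new ingredient is the branching map on the \emph{distribution sheaves}, and everything else runs parallel to \cref{prop:changesupport}.

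First I would write down the required morphism of coefficient sheaves over $\cU^H_n$. By \cref{prop:pullbackcomp} we have $\hat\iota^*(\cM^G_{\HT,n,\Iw}) = \cM^H_{\HT,n,\ds}\times^{[\cT^\ds_n,\gamma]}\cM^\square_{G,n}$, so $\hat\iota^*(\cD^{n-\an}_{G,\nu_A})$ is the contracted product of $\cM^H_{\HT,n,\ds}$ with $\hat\iota^*(D^{n-\an}_{G,\kappa_A+2\rho_{nc}})$ along $\cT^\ds_n$ acting by $\gamma$-conjugation. Pairing against the kraken $\mathscr{K}^\lambda$ with $\lambda = \nu_1-\tau_1 = \nu_2+\tau_2+2$ — which is legitimate precisely because $\nu_A$ and $\tau_A$ are compatible in the sense of \cref{def:compat} — produces, via \cref{prop:krakenpower}, a $\cT^\ds_n$-equivariant map $\hat\iota^*(D^{n-\an}_{G,\kappa_A+2\rho_{nc}})\to D^{n-\an}_{H,-\tau_A}$, hence a morphism of sheaves $\hat\iota^*(\cD^{n-\an}_{G,\nu_A})\to \cD^{\an}_{H,\ds,\tau_A}$ on $\cU^H_n$; this is exactly the content of \cref{prop:krakenonsheaves}. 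I would then twist by the inclusion of ideal sheaves $\hat\iota^*\cO(-D_G)\hookrightarrow\cO(-D_H)$ — available once the cone decompositions for the two toroidal compactifications are chosen so that the boundary of $S_H$ maps into that of $S_G$, which can always be arranged — to obtain a morphism $\hat\iota^*\big(\cD^{n-\an}_{G,\nu_A}(-D_G)\big)\to \cD^{\an}_{H,\ds,\tau_A}(-D_H)$.

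Next I would apply cohomology with supports along \eqref{eq:adicSh}. Since that square is Cartesian with closed-embedding horizontal arrows, $\cZ^H_m$ is the preimage of $\cI^G_{mn}$ inside $\cU^H_n$, and functoriality of cohomology with supports gives a pullback $R\Gamma_{\cI^G_{mn}}\big(\cU^G_n,\cD^{n-\an}_{G,\nu_A}(-D_G)\big)\to R\Gamma_{\cZ^H_m}\big(\cU^H_n,\hat\iota^*(\cD^{n-\an}_{G,\nu_A}(-D_G))\big)$; composing with the coefficient map of the previous paragraph lands in $R\Gamma_{\cZ^H_m}(\cU^H_n,\cD^{\an}_{H,\ds,\tau_A}(-D_H))$. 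Finally, since $R\Gamma^G_{w,\an}(\nu_A,\cusp)^{-,\fs}$ is by definition \eqref{eq:cusplocanaoc} the finite-slope (for $\cU'_B$) summand of the source, and since the whole composite commutes with $\cU'_{\Sieg}$ and $\cU'_{\Kl}$ at $p$ — the Hodge--Tate period maps being equivariant for the relevant Hecke correspondences, exactly as in \cref{prop:changesupport}, and $(\cI^G_{m,n},\cU^G_n)$ being an allowed support condition as in \bp{\S 6} — I would restrict the composite to this summand to obtain the asserted morphism of complexes of $A$-modules. As flagged in the functoriality overview, no finite-slope condition survives on the right, which is why the target carries no $\fs$ superscript.

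The Hecke-equivariance bookkeeping and the boundary twist are routine, following the pattern already used for the vector bundles, so I do not expect difficulty there. The point needing the most care — the ``main obstacle'', such as it is — is ensuring that the support condition matches up correctly at the level of sheaf cohomology: that $(\cZ^H_m,\cU^H_n)$ is the genuine (scheme-theoretic, not merely set-theoretic) preimage of $(\cI^G_{mn},\cU^G_n)$ under $\hat\iota$, so that functoriality of $R\Gamma$ with supports applies cleanly and the output is compatible, as $m$ varies, with the corestriction maps. This is precisely what \cref{prop:Cartadic}, \eqref{eq:Cartadic2} and \cref{thm:HTmaps} are built to deliver, so the substantive geometric work is already done; what remains is only to record that they combine into the Cartesian square \eqref{eq:adicSh} with the right closed-subspace structure, and to trace through its compatibility with the coefficient map of \cref{prop:krakenonsheaves}.
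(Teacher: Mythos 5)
Your argument is correct and takes essentially the same route as the paper: the paper's proof is the single line ``Immediate from \cref{prop:krakenonsheaves}'', and what you have written out—pulling back along the Cartesian square \eqref{eq:adicSh} via functoriality of cohomology with supports, composing with the kraken-induced coefficient map from \cref{prop:krakenonsheaves}, twisting by the inclusion of ideal sheaves for the boundary, and restricting to the finite-slope summand of the source—is precisely the chain of steps the paper leaves implicit, parallel to the overconvergent case in \cref{prop:changesupport} and \eqref{eq:iota2}.
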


  \begin{proof}
   Immediate from \cref{prop:krakenonsheaves}.
  \end{proof}

  We have only defined this morphism for $m, n$ small relative to $t$. However, using \cref{remark:bigsheafsmallt}, we can argue as in \cref{sect:changelevel} and define
  \[ R\Gamma_{\id, \an}(\cS_{H, \Iw}(p^t), \tau_A, \cusp)^{-, \dag} = \varprojlim_m R\Gamma_{\cZ^H_{m, \Iw}(p^t)}\left( \cU^H_{n, \Iw}, \cD^{\an}_{H,\Iw,\tau_A}(-D_H)\right).\]
  Then we obtain a natural map
  \[R\Gamma^G_{w, \an}(\nu_A, \cusp)^{-, \fs} \to R\Gamma_{\id, \an}(\cS_{H, \Iw}(p^t), \tau_A, \cusp)^{-, \dag}.\]

  \begin{note}
   By construction, this morphism is compatible with derived base-change in $A$. If $A = \Qp$, and $\nu_A$ and $\tau_A$ are algebraic weights such that $r_1 \ge r_2 \ge -1$ and $t_1, t_2 \ge -1$, then this map fits into a commutative diagram with the pullback map on overconvergent cohomology defined in \S\ref{sect:changelevel}.
  \end{note}

 \subsection{Pairings and duality}

  Dually to the above, we define
  \[
   R\Gamma_{\id, \an}(\cS_{H, \Iw}(p^t), \tau_A)^{+, \dag} = \varinjlim_m R\Gamma\left(\cZ^H_{m, \Iw}(p^t), \cV^{\an}_{H,\Iw, \tau_A}\right).
  \]

  Note that if $\tau_A = (t_1, t_2; c)$ then $\cV^{\an}_{H,\Iw, \tau_A}$ is the sheaf $\cV^H_{-\tau-2\rho_H} = \cV_{(-2-t_1, -2-t_2; -c)}$, which is the sheaf of modular forms of weight $(t_1 + 2, t_2 + 2)$ (with the normalisation of the central character depending on $c$).

  \begin{proposition}
   The above complex is concentrated in degree 0 and independent of $t$. It can be identified with the space of $p$-adic overconvergent modular forms for $H$ of tame level $K^{H, p}$ and weight $\tau_A + (2, 2)$.
  \end{proposition}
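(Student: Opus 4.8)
The plan is to reduce the statement to a single $\GL_2$-factor, where it becomes the familiar description of the ``$H^0$-corner'' of higher Coleman theory for a modular curve.

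First I would use the product structure of $H$. The Shimura variety $\cS_{H,\Iw}(p^t)$ is (a union of connected components of) the product of two modular curves $\cS_{H_i,\Iw}(p^t)$; correspondingly $\tFL^H = \tFL^{H_1}\times\tFL^{H_2}$, the tube $\tZ^H_m$ is the product of the tubes $\tZ^{H_i}_m = \{z_i\in\overline{\cB}^\circ_m + p^t\Zp\}$, and, by the compatibility of the Hodge--Tate period maps for $H$, $H_1$ and $H_2$ already recorded above, $\cZ^H_{m,\Iw}(p^t) = \cZ^{H_1}_{m,\Iw}(p^t)\times\cZ^{H_2}_{m,\Iw}(p^t)$. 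Since $M_H = T$ is a torus, the $A$-valued weight $\tau_A$ splits as a pair $(\tau_1,\tau_2)$ (together with a central character), and $\cV^{\an}_{H,\Iw,\tau_A}$ is the external tensor product $\cV^{\an}_{H_1,\Iw,\tau_1}\boxtimes_A\cV^{\an}_{H_2,\Iw,\tau_2}$ of two $A$-line bundles (each $V^{n-\an}_{H_i,\cdot}$ being free of rank one over $A$). A Künneth argument then reduces both the degeneracy statement and the computation of $H^0$ to the case of one modular curve.

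For a single factor, I would identify the data with classical objects. As recorded just before the statement, $\cV^{\an}_{H_i,\Iw,\tau_i}$ is the modular line bundle $\omega^{\tau_i+2}$ (with $A$-coefficients), and $\cZ^{H_i}_{m,\Iw}(p^t) = (\pi^{H_i}_{\HT})^{-1}(\tZ^{H_i}_m)$ is --- by the explicit description of the Hodge--Tate period map of the modular curve near the relevant $B$-fixed point of $\tFL^{H_i}$ at Iwahori level $p^t$, a special case of the theory of \cite{boxerpilloni20} --- a strict neighbourhood of the ordinary locus equipped with a chosen canonical subgroup of level $p^t$. Such a space is quasi-Stein (it is the rising union of ``dagger affinoids'' of the type $\overline{\cB}^\circ$), so its cohomology with coefficients in a line bundle is concentrated in degree $0$; by Künneth the same holds for $\cZ^H_m$, with $H^0(\cZ^H_m, \cV^{\an}_{H,\Iw,\tau_A}) = H^0(\cZ^{H_1}_m,\omega^{\tau_1+2})\htimes_A H^0(\cZ^{H_2}_m,\omega^{\tau_2+2})$. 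Passing to $\varinjlim_m$ --- which is precisely the overconvergent construction, the $\cZ^{H_i}_m$ forming a decreasing system of neighbourhoods of the ordinary locus --- identifies each factor with the space of overconvergent $p$-adic modular forms of weight $\tau_i+2$ and the given tame level, and the product is then overconvergent modular forms for $H$ of weight $\tau_A+(2,2)$. Independence of $t$ follows from the existence of the canonical subgroup of level $p^t$ on a strict neighbourhood of the ordinary locus for every $t\ge 1$: the degeneracy maps $\cS_{H,\Iw}(p^{t+1})\to\cS_{H,\Iw}(p^t)$ restrict to isomorphisms between the systems $\{\cZ^H_m\}_m$ at the two levels (after a shift in $m$), compatibly with the line bundles, exactly as in \cref{sect:changelevel}.

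The step I expect to require the most care is not a deep one but the bookkeeping of conventions: checking that the tube $\cZ^{H_i}_m$, defined here through the $\overline{\cB}^\circ_m + p^t\Zp$-discs in the flag variety, is genuinely (a disjoint union of translates of) a quasi-Stein neighbourhood of the ordinary locus, so that the vanishing of higher cohomology of line bundles applies; and confirming that the colimit over $m$, with the shift by $2\rho_H$ and the central-character normalisation built into $\cV^{\an}_{H,\Iw,\tau_A}$, recovers precisely the usual space of overconvergent modular forms of weight $\tau_A+(2,2)$ --- in the family case, with ``overconvergent modular forms'' interpreted as the appropriate Banach $A$-module and Künneth taken in the completed sense.
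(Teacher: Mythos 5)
Your proposal follows essentially the same route as the paper's proof: reduce via the product structure of $H$ to a single modular curve, identify $\cZ^{H_i}_m$ as a strict neighbourhood of the ordinary (``canonical'') locus, deduce concentration in degree $0$ from the topological structure of that tube, identify $H^0$ with Pilloni's construction of overconvergent modular forms, and obtain independence of $t$ from the theory of the canonical subgroup. The only substantive difference is one of emphasis: the paper simply notes that the canonical locus and its small strict neighbourhoods are affinoid and concludes vanishing above degree $0$ by passing to the colimit, whereas you phrase it as a quasi-Stein property; one small imprecision in your version is that $\overline{\cB}^\circ_m$ is the closure of a rising union of affinoids (equivalently a decreasing intersection of affinoids, i.e.~a dagger affinoid), not itself a rising union of dagger affinoids, but this does not affect the cohomology-vanishing conclusion. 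Otherwise the argument matches the paper's.
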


  \begin{proof}
   For simplicity we suppose $K^{H, p}$ is the principal congruence subgroup of level $N$ for some $N$ (the general case reduces easily to this). Then the Shimura variety for $H$ is simply the fibre product (over $\mu_N$) of two copies of the level $N$ modular curve parametrising elliptic curves with full level $N$ structure and a cyclic subgroup of order $p^t$. Then $\pi_{HT}^{-1}(\{\id_H\})$ is the ``canonical locus'', where the $p$-subgroups are both multiplicative; and the $\cZ_{m, \Iw}^H$ are a cofinal family of neighbourhoods of this locus. Via the theory of the canonical subgroup, this space is independent of the choice of levels.

   Since the canonical locus is affinoid (and sufficiently small strict neighbourhoods of it also have this property), its cohomology vanishes above degree 0, and the degree 0 cohomology identifies with overconvergent sections of $\cV^{\an}_{H, \Iw, \tau_A}$. If we choose an extension $\tilde\tau_A$ of $\tau_A$ to the maximal torus of $\GL_2 \times \GL_2$, then $\cV^{\an}_{H, \Iw, \tau_A}$ decomposes as the product of two copies of the corresponding sheaves on the individual modular curves. This is precisely the construction of overconvergent modular forms described in \cite{pilloni13} (see the discussion following Prop 6.2 of \emph{op.cit.} for a comparison with Coleman's original approach).
  \end{proof}

  \begin{theorem}[c.f. \bp{Theorem 6.38}]
   \label{thm:bspairing}
   The cup product induces a pairing
   \[ H^2_{\id, \an}(\cS_{H, \Iw}(p^t), \tau_A, \cusp)^{-, \dag} \times H^0_{\id, \an}(\cS_{H, \Iw}(p^t), \tau_A)^{+, \dag} \longrightarrow A, \]
   whose formation is compatible with base-change in $A$, and which is compatible with the Serre duality pairing on classical cohomology when $A = \Qp$ and $\nu$, $\tau$ are classical weights.
  \end{theorem}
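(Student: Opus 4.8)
The argument runs parallel to \bp{Theorem 6.38}, applied to $H$ and to the minimal-length element $\id\in{}^MW_H$; the geometry is much simpler here because $M_H=T$ is a torus and the compactified Shimura variety for $H$ is, up to a finite base, a product of two modular curves. The first step is the pairing of sheaves. Since $M_H$ is abelian, $\cV^{\an}_{H,\Iw,\tau_A}$ and $\cD^{\an}_{H,\Iw,\tau_A}$ are \emph{line} bundles; recalling that $\cV^{\an}_{H,\Iw,\tau_A}=\cV^H_{-\tau_A-2\rho_H}$, while for algebraic $\tau_A$ the sheaf $\cD^{\an}_{H,\Iw,\tau_A}$ restricts to $\cV^H_{\tau_A}$, the shift by $2\rho_H$ built into the definition of $\cD$ (the analogue for $H$ of the parenthetical after \eqref{eq:Dsheafascofibreprod}) is exactly what turns multiplication of sections into an isomorphism
\[
\cD^{\an}_{H,\Iw,\tau_A}\otimes_{\cO}\cV^{\an}_{H,\Iw,\tau_A}\;\xrightarrow{\ \sim\ }\;\cV^H_{-2\rho_H}=\Omega^2_{\cS_H}(\log D_H).
\]
Twisting the first factor by $\cO(-D_H)$ turns this into $\cD^{\an}_{H,\Iw,\tau_A}(-D_H)\otimes\cV^{\an}_{H,\Iw,\tau_A}\to\Omega^2_{\cS_H}(\log D_H)(-D_H)=\Omega^2_{\cS_H}$, the dualising sheaf governing \emph{cuspidal} coherent cohomology, so the cuspidal decoration on the $H^2$-factor and its absence on the $H^0$-factor match up precisely. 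This pairing is defined over the universal base and hence commutes with base change in $A$.

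Next, fix $n,t$ with $t>n>0$. For $m'>m\ge n$ one has $\cZ^H_{m',\Iw}\subseteq\mathrm{int}_{\cU^H_{n,\Iw}}\!\big(\cZ^H_{m,\Iw}\big)$, so by excision $R\Gamma_{\cZ^H_{m',\Iw}}(\cU^H_{n,\Iw},-)=R\Gamma_{\cZ^H_{m',\Iw}}(\cZ^H_{m,\Iw},-)$; in particular a section $g\in H^0(\cZ^H_{m,\Iw},\cV^{\an}_{H,\Iw,\tau_A})$ acts on $R\Gamma_{\cZ^H_{m',\Iw}}(\cU^H_{n,\Iw},\cD^{\an}_{H,\Iw,\tau_A}(-D_H))$ by cup product followed by the sheaf pairing above, landing in $R\Gamma_{\cZ^H_{m',\Iw}}(\cU^H_{n,\Iw},\Omega^2_{\cS_H})$. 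These operations are compatible with the extend-support maps as $m'\to\infty$ and with the restriction maps as $m\to\infty$, so passing to $\varprojlim_{m'}$ on the $\cD$-factor and $\varinjlim_{m}$ on the modular-forms factor produces an $A$-bilinear map from $H^2_{\id,\an}(\cS_{H,\Iw}(p^t),\tau_A,\cusp)^{-,\dag}\times H^0_{\id,\an}(\cS_{H,\Iw}(p^t),\tau_A)^{+,\dag}$ to $\varprojlim_{m'}H^2_{\cZ^H_{m',\Iw}}(\cU^H_{n,\Iw},\Omega^2_{\cS_H})$. Independence of the auxiliary $n$ (and $t$) follows from the independence of the $\dag$-spaces and their transition maps recalled in \cref{sect:families} and \bp{\S 6}.

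It remains to construct a trace $\varprojlim_{m'}H^2_{\cZ^H_{m',\Iw}}(\cU^H_{n,\Iw},\Omega^2_{\cS_H})\to A$ that is compatible with the transition maps and, when $\nu$ and $\tau$ are classical, with the classical trace. I would obtain this by Künneth reduction: up to a finite base, $\cS_H=X_1\times X_2$ with $X_i$ modular curves, the loci $\cU^H_{n,\Iw}$ and $\cZ^H_{m,\Iw}$ are external products of the radius-$n$ canonical loci and the radius-$m$ neighbourhoods of the multiplicative locus in the $X_i$, and $\Omega^2_{\cS_H}$, $\cV^{\an}_{H,\Iw,\tau_A}$, $\cD^{\an}_{H,\Iw,\tau_A}$ all decompose as external tensor products (as in the proof of the proposition identifying the $H^0$-space with overconvergent modular forms). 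Hence $R\Gamma_{\cZ^H_{m'}}(\cU^H_n,\Omega^2_{\cS_H})$ is the completed tensor product of the two complexes $R\Gamma_{\cZ^{i}_{m'}}(\cU^{i}_n,\Omega^1_{X_i}(\log D_i)(-D_i))$, each concentrated in degree $1$ and each carrying the classical residue map to $\Qp$; this is the one-dimensional instance of the duality of \bp{\S 6} and is the trace underlying the theory of overconvergent modular forms, visibly compatible with change of $m'$ and with the trace on classical $H^1(X_i^{\tor},\Omega^1(\log D_i)(-D_i))$. The product of the two residues is the desired trace, and with it the pairing, on classical weights, becomes the external product of the two Serre-duality pairings on the $X_i$, hence the Serre-duality pairing on $\cS_H$. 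Base-change compatibility is then immediate: the $\cZ$'s are affinoid, all the cohomology is computed by complexes of flat (even, after a slope decomposition, projective) $A$-modules, and the residue maps are base-changed from $\Qp$.

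The main obstacle is the last step: setting up Serre/local duality for cohomology with support in a \emph{shrinking} affinoid neighbourhood of the ordinary-multiplicative locus, with enough control on the transition maps for the trace to survive $\varprojlim_{m'}$, and verifying that it agrees with the classical trace. The Künneth reduction makes this tractable, since in dimension one it is the familiar residue pairing for dagger/overconvergent modular forms; the genuinely delicate part is the bookkeeping needed to check that the external-product decompositions of all the sheaves and all the subspaces are exactly the ones built into the definitions of the $\dag$-spaces, and that everything is compatible with the Hecke action and with base change in $A$.
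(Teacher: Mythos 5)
Your proposal is essentially correct, and it supplies considerably more detail than the paper's own proof, which is very terse and leans on the label ``c.f.\ \bp{Theorem 6.38}'' together with the remark that compatibility with Serre duality holds ``by construction''. (The phrase in the paper's proof about ``combining the pullback map of Proposition \ref{prop:bigsheafpullback} with the pairing between the cohomology groups'' is describing how the pairing is later \emph{used} to define the $p$-adic $L$-function, not the construction of the pairing itself — note also that the paper's proof appears to put a spurious $\cusp$ on the $H^0$ factor, which is surely a typo, since your bookkeeping with $\cO(-D_H)$ shows exactly one factor must be cuspidal to land in the dualizing sheaf $\Omega^2_{\cS_H}$.)

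The one place where you take a genuinely different route is the construction of the trace $\varprojlim_{m'}H^2_{\cZ^H_{m',\Iw}}(\cU^H_{n,\Iw},\Omega^2_{\cS_H})\to A$. You reduce to the one-dimensional case via Künneth, exploiting that $\cS_H$ decomposes (after an auxiliary level structure) as a product of modular curves and that all the relevant sheaves and tubes split as external products; the trace then becomes the tensor square of the familiar residue pairing for overconvergent modular forms. The paper instead implicitly invokes the general local duality machinery of \bp{\S 6}, applied to $H$ and $w = \id$, which avoids any product decomposition and so would work for an arbitrary Shimura datum. Your approach buys concreteness and ties the argument directly to the classical theory of dagger modular forms (and is probably the most transparent way to verify the compatibility with classical Serre duality); the paper's approach buys generality at the cost of opacity. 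Both are valid. The only point to be careful about, which you flag yourself, is the bookkeeping needed to check that the external-product decompositions of the $\cZ^H_m$, the sheaves $\cD^{\an}$, $\cV^{\an}$, and the transition maps are exactly the ones built into the $\dag$-spaces; and that the finite base-change from $\GL_2\times_{\GL_1}\GL_2$ to the genuine product $\GL_2\times\GL_2$ (needed to make the Künneth decomposition literal) is handled consistently, as is done in the paper's proposition identifying the $H^0$-space with overconvergent modular forms.
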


  \begin{proof}
   We define this pairing by combining the pullback map of \ref{prop:bigsheafpullback} with the pairing between the cohomology groups $H^2_{\id, \an}(\cS_{H, \Iw}(p^t), \tau_A, \cusp)^{-, \dag}$ and $H^0_{\id, \an}(\cS_{H, \Iw}(p^t), \tau_A, \cusp)^{+, \dag}$. By construction, this is compatible with Serre duality for each classical weight.
  \end{proof}



\section{Construction of the $p$-adic $L$-function}

 Let $L$ be a finite extension of $\Qp$.

 \subsection{Families of Eisenstein series}

  We refer to \cite[\S 7]{LPSZ1} for the construction of $p$-adic families of Eisenstein series $\cE^{\Phi^{(p)}}(\kappa_1, \kappa_2; \chi^{(p)})$, depending on a prime-to-$p$ Schwartz function $\Phi^{(p)}$ and prime-to-$p$ Dirichlet character $\chi^{(p)}$ (both valued in $L$) and a pair of characters $\kappa_1, \kappa_2$ of $\Zp^\times$ (valued in some $p$-adically complete $L$-algebra $A$).

  \begin{note}
   Note that this Eisenstein series is $p$-depleted, i.e.~lies in the kernel of $U_p$; and it is zero on any components of $\Spec(A)$ which do not satisfy the parity condition $\kappa_1(-1) \kappa_2(-1) = -\chi^{(p)}(-1)$.

   The construction factors through the projection of $\Phi^{(p)}$ to the eigenspace where $\stbt{a}{0}{0}{a}$ for $a \in \widehat{\ZZ}^{(p)}$ acts as $\widehat{\chi}^{(p)}(a)^{-1}$, where $\widehat{\chi}^{(p)}$ is the adelic character attached to $\chi^{(p)}$ as in \cite[\S 2.2]{LPSZ1}. We shall henceforth assume, without loss of generality, that $\Phi^{(p)}$ lies in this eigenspace; thus $\chi^{(p)}$ is uniquely determined by $\Phi^{(p)}$ and we sometimes drop it from the notation.
  \end{note}

  \begin{proposition}
   If $A$ is an affinoid algebra, and one of the $\kappa_i$ is a finite-order character, then $\cE^{\Phi^{p}}(\kappa_1, \kappa_2)$ is an overconvergent $A$-valued cusp form of weight-character $1 + \kappa_1 + \kappa_2$.
  \end{proposition}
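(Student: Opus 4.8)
The plan is to reduce the statement to a computation with $q$-expansions at algebraic weights and then bootstrap to families. After relabelling the two characters, assume $\kappa_2 = \psi_0$ is finite-order. By the construction recalled from \cite[\S 7]{LPSZ1} (unwinding the local recipe at $p$), $\cE^{\Phi^{(p)}}(\kappa_1,\kappa_2)$ is a $p$-adic modular form whose $q$-expansion at the cusp $\infty$ has vanishing constant term and, for $(n,p)=1$, $n$-th coefficient $u_n\sum_{d\mid n}\kappa_1(d)\psi_0(n/d)$ for units $u_n$ depending only on $\Phi^{(p)}$; as the inner sum is finite this is an $A$-analytic function of $\kappa_1$. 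By the $q$-expansion principle it is enough to exhibit an overconvergent $A$-valued section of the relevant automorphic line bundle, of radius bounded below independently of the specialisation — i.e. on a cofinal system of the $\cZ^H_m$ — with this $q$-expansion, and to check vanishing of the constant terms at the cusps lying in the multiplicative-ordinary locus.

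First I would treat the algebraic specialisations. For $\kappa_1\colon t\mapsto t^k\psi_1(t)$ with $k\geq 1$, the recipe above produces the $p$-depletion $E^{[p]}$ of the classical \emph{holomorphic} Eisenstein series $E$ of weight $k+1$ with characters $\psi_1,\psi_0$ (twisted by $\chi^{(p)}$). The identity $E^{[p]} = (1-V_p)E^{\mathrm{ord}}$, where $E^{\mathrm{ord}} = E - \beta_k\cdot V_pE$ is the ordinary ($U_p$-unit) $p$-stabilisation ($v_p(\beta_k)=k$), realises $E^{[p]}$ as $1-V_p$ applied to a \emph{classical} form. Since classical forms extend over the whole compactified modular curve, and $V_p$ (pullback along the degree-$p$ isogeny dual to the canonical subgroup) changes the radius of overconvergence by at most a bounded factor, $E^{[p]}$ is overconvergent of radius $\geq r_0$ for an absolute constant $r_0>0$, uniformly in $k$ and $\psi_1$; and the $(1-V_p)$-combination kills the constant terms at the relevant cusps, so $E^{[p]}$ is a genuine overconvergent \emph{cusp} form. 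This is exactly where the finite-order hypothesis is used: were the frozen character allowed a nontrivial integer weight $k_2>0$, the specialisations would instead be $\theta^{k_2}$ applied to a lower-weight holomorphic Eisenstein series, hence nearly-holomorphic of near-degree $k_2$, and their $p$-adic avatars would be only nearly-overconvergent in the sense of \cite{andreattaiovita21}, not overconvergent.

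Finally I would pass to the family. The algebraic $\kappa_1$ above are Zariski-dense in $\operatorname{Max}(A)$ (the second weight being frozen), so since $A$ is reduced an $A$-valued $q$-expansion is determined by its specialisations there. One route is to observe that the whole family $\cE^{\Phi^{(p)}}(\kappa_1,\kappa_2)$ and the expression $(1-V_p)$ applied to Hida's ordinary $p$-adic Eisenstein family of weight $1+\kappa_1$ (at the appropriate level, with the fixed finite-order twist by $\kappa_2$) are $p$-adic families over $A$ with the same algebraic specialisations, hence coincide; as $1-V_p$, finite-order twists and bounded changes of overconvergence radius all preserve overconvergent $A$-families, overconvergence and cuspidality follow at once, and the weight-character is read off as $1+\kappa_1+\kappa_2$. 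A second, more self-contained route is to invoke the saturation of the $\cO_A$-module of overconvergent cuspidal sections of radius $r_0$ inside the $\cO_A$-module of $p$-adic forms (the quotient being $\cO_A$-torsion-free), which forces an $A$-family with overconvergent specialisations of radius $\geq r_0$ at a Zariski-dense set to be itself overconvergent of radius $\geq r_0$. I expect the main obstacle to lie precisely in this family-level step: either matching the adelic construction of \cite[\S 7]{LPSZ1} with the ``$(1-V_p)$ of a Hida family'' description — including the bookkeeping of the finite-order twist and level at $p$, which is what the hypothesis on one of $\kappa_1,\kappa_2$ makes possible — or making the saturation argument rigorous with the correct integral models of the sheaves on the $\cZ^H_m$.
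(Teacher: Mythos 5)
Your proposal follows essentially the same route as the paper: reduce, using the finite-order hypothesis, to the observation that the family is the $p$-depletion of a Hida (ordinary) family of Eisenstein series, then invoke overconvergence of ordinary forms. The paper compresses this to two sentences, twisting away the finite-order character so that one $\kappa_i$ is actually trivial, and then citing Ohta \cite[\S 2.3]{ohta99} for the identification with a Hida family and Coleman's definition for the overconvergence; you instead keep the finite-order character in place and verify the matching at algebraic specialisations via $q$-expansions. Two small points worth tidying in your write-up. First, the $p$-depletion of a prime-to-$p$ level classical Eisenstein series $E$ is $(1-\alpha V_p)(1-\beta V_p)E = (1-\alpha V_p)E^{\mathrm{ord}}$ with $\alpha$ the unit root of Hecke at $p$ (here $\alpha = \psi_1(p)$), not $(1-V_p)E^{\mathrm{ord}}$; this does not affect the argument since multiplying $V_p$ by a unit does not change radii, but as written the identity is off. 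Second, for the family-level step, the paper avoids the ``matching'' and ``saturation'' alternatives you discuss by directly appealing to the standard fact that Hida's Eisenstein measure is, by construction, already a family of ordinary (hence overconvergent) forms, so the $p$-depletion operator $1-V_pU_p$ applied to it is again overconvergent with uniform radius; your two proposed routes would work but are more elaborate than necessary. Your identification of exactly where the finite-order hypothesis enters (to avoid $\theta$-powers and nearly-overconvergent forms) is correct and is the genuine content of the statement.
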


  \begin{proof}
   Since twisting by a finite-order character preserves overconvergence, it suffices to assume $\kappa_1$ or $\kappa_2$ is 0. Then our $p$-adic Eisenstein series is the $p$-depletion of a family of \emph{ordinary} Eisenstein series, cf.~\cite[\S 2.3]{ohta99}, and it is well-known that these ordinary Eisenstein series are overconvergent (indeed, this is true by definition in Coleman's approach to overconvergent modular forms).
  \end{proof}

  As noted in \emph{op.cit.}, for $k \ge 1$, the Eisenstein series $F^{k}_{\Phi^p \Phi_{\mathrm{dep}}}$ described in \cite[\S 4.3]{LZ20b-regulator} is (the classical form associated to) $\cE^{\Phi^p}(k-1, 0)$, and $E^{k}_{\Phi^p \Phi_{\mathrm{dep}}}$ is $\cE^{\Phi^p}(0, k-1)$. It also implies the following relation:

  \begin{proposition}[cf.~{\cite[Prop 16.2.1]{LZ20b-regulator}}]
   Let $t \in \ZZ_{\ge 0}$. As overconvergent cusp forms of weight $-t$, we have
   \[ \theta^{-(1+t)}\left(F^{(t+2)}_{\Phi^p \Phi_{\mathrm{dep}}}\right) = \cE^{\Phi^p}(0, -1-t; \Phi^{(p)}), \]
   where $\theta = q \tfrac{\mathrm{d}}{\mathrm{d}q}$ is the Serre differential operator.
  \end{proposition}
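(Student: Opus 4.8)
The plan is to reduce everything to a $q$-expansion identity, since both sides are overconvergent $p$-adic cusp forms and such forms are determined by their $q$-expansions (on each connected component of the ordinary/canonical locus, or equivalently by the $q$-expansion principle for the modular curve). First I would recall from the cited \cite[\S 7]{LPSZ1} and \cite[\S 4.3]{LZ20b-regulator} the explicit $q$-expansions: $F^{(t+2)}_{\Phi^p \Phi_{\mathrm{dep}}}$ is (the classical form attached to) $\cE^{\Phi^p}(t+1, 0)$, whose $q$-expansion has $n$th coefficient given by a sum of $d^{t+1}$ over divisors $d$ of $n$ coprime to $p$ (twisted by the relevant Schwartz data and Dirichlet character), i.e. essentially $\sigma^{(p)}_{t+1}(n)$ up to the normalising factors; and $\cE^{\Phi^p}(0, -1-t)$ has $n$th coefficient a similar divisor sum but with the divisor raised to the power corresponding to the \emph{second} weight slot, namely $\sum_{d \mid n,\, p \nmid d} (n/d)^{-1-t}$ type terms — here the key point (already recorded in the ``Note'' preceding the proposition) is that this second family is the $p$-depletion of a family of ordinary Eisenstein series, hence overconvergent, so the right-hand side genuinely is an overconvergent form of weight $-t$.

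Next I would compute the effect of $\theta^{-(1+t)}$ on $q$-expansions. Since $\theta = q\frac{\mathrm{d}}{\mathrm{d}q}$ acts on a $q$-expansion $\sum a_n q^n$ by $\sum n a_n q^n$, the operator $\theta^{j}$ multiplies $a_n$ by $n^{j}$ for any integer $j$ (including negative $j$) \emph{provided} all terms with $p \mid n$ have been killed — which is exactly the case here because $F^{(t+2)}_{\Phi^p\Phi_{\mathrm{dep}}}$ is $p$-depleted (it lies in the kernel of $U_p$, as noted for the whole Eisenstein family). So $\theta^{-(1+t)}$ sends the coefficient attached to the divisor sum $\sum_{d\mid n} d^{t+1}$ to $n^{-(1+t)}\sum_{d\mid n} d^{t+1} = \sum_{d \mid n} (n/d)^{-(1+t)} = \sum_{e \mid n} e^{-(1+t)}$, and after reindexing this is precisely the $q$-expansion of $\cE^{\Phi^p}(0,-1-t)$. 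One then checks that the central-character / nebentypus normalisations and the prime-to-$p$ Schwartz-function decorations match on both sides; this is bookkeeping using the conventions of \cite[\S 7]{LPSZ1}, and the statement cites \cite[Prop 16.2.1]{LZ20b-regulator} precisely because the analogous identity was verified there.

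Finally, I would argue that matching $q$-expansions suffices. Both $\theta^{-(1+t)}(F^{(t+2)}_{\Phi^p\Phi_{\mathrm{dep}}})$ and $\cE^{\Phi^p}(0,-1-t)$ are overconvergent $p$-adic modular forms of the same weight-character $-t$ and the same tame level: for the left side, overconvergence of a $p$-depleted form followed by application of $\theta^{\pm}$ is standard (Coleman's theory: $\theta$ preserves overconvergence on the kernel of $U_p$, with arbitrary integer twists permitted after depletion); for the right side it is the proposition quoted just above. Two overconvergent forms of the same weight and level with equal $q$-expansions are equal, so the identity follows. The main obstacle, and really the only non-formal point, is keeping the normalisations straight — the exact power of $p$, the $\langle p\rangle$-type twists, and the precise Schwartz/character data attached to each Eisenstein family in the conventions of \cite{LPSZ1} versus \cite{LZ20b-regulator} — but since the two families $F^{(\bullet)}$ and $E^{(\bullet)}$ (equivalently $\cE^{\Phi^p}(k-1,0)$ and $\cE^{\Phi^p}(0,k-1)$) are defined in \emph{op.cit.} precisely so as to be interchanged by this kind of $\theta$-operation, the calculation goes through as in \cite[Prop 16.2.1]{LZ20b-regulator}.
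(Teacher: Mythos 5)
The paper itself gives no internal proof of this proposition; it simply states it as a restatement of \cite[Prop.\ 16.2.1]{LZ20b-regulator}, following the observation that $F^{(k)}_{\Phi^p\Phi_{\mathrm{dep}}} = \cE^{\Phi^p}(k-1,0)$ and $E^{(k)}_{\Phi^p\Phi_{\mathrm{dep}}} = \cE^{\Phi^p}(0,k-1)$. Your reconstruction via $q$-expansions is the natural argument and is, in substance, what that reference does: both sides are $p$-depleted, so the coefficient of $q^n$ vanishes unless $p\nmid n$, in which case $\theta^{-(1+t)}$ multiplies it by $n^{-(1+t)} \in \Zp^\times$, converting the divisor sum $\sum_{d\mid n} d^{t+1}$ for $\cE^{\Phi^p}(t+1,0)$ into $\sum_{d\mid n}(n/d)^{-(1+t)}$, which is the coefficient of $\cE^{\Phi^p}(0,-1-t)$; matching $q$-expansions of overconvergent forms then forces equality.

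One small point of emphasis: your parenthetical claim that ``$\theta$ preserves overconvergence on the kernel of $U_p$, with arbitrary integer twists permitted after depletion'' is stated a bit too strongly as a standing fact for negative powers of $\theta$ --- overconvergence of negative $\theta$-twists of a depleted form is not automatic and is really part of what needs to be established. But in your argument this does not matter: you independently know that the right-hand side $\cE^{\Phi^p}(0,-1-t)$ is overconvergent (by the proposition just before the statement, since one of the two characters is zero), and once the formal $q$-expansion of $\theta^{-(1+t)}(F^{(t+2)}_{\Phi^p\Phi_{\mathrm{dep}}})$ is shown to agree with it, the overconvergence of the left-hand side is a \emph{consequence}, not a hypothesis. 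If you reorganise the write-up to make the overconvergence of the left side a conclusion rather than an input, the argument is clean and complete (modulo the bookkeeping on Schwartz-function and nebentypus normalisations, which you correctly flag as the only nontrivial verification).
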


 \subsection{Tame test data}

  We fix the following data:
  \begin{itemize}
   \item $M_0, N_0$ are positive integers coprime to $p$ with $M_0^2 \mid N_0$, and $\chi_0$ is a Dirichlet character of conductor $M_0$ (valued in $L$).
   \item $M_2, N_2$ are positive integers coprime to $p$ with $M_2 \mid N_2$, and $\chi_2$ is a Dirichlet character of conductor $M_2$ (valued in $L$).
  \end{itemize}
  We will consider automorphic representations $\pi$ of $G$ with conductor $N_0$ and character $\widehat{\chi}_0$ up to twists by norm, and similarly $\sigma$ of $\GL_2$ with conductor $N_2$ and character $\widehat{\chi}_2$ up to twists by norm.\footnote{This numbering of the parameters comes from the fact that the zeta-integral computations of \cite{LPSZ1} are simpler to write down if the Eisenstein series lives on the first factor of $H$.}

  Let $S$ denote the set of primes dividing $N_0 N_2$. By \emph{tame test data} we shall mean a pair $\gamma_S = (\gamma_{0, S}, \Phi_S)$, where:
  \begin{itemize}
  \item $\gamma_{0, S} \in G(\QQ_S)$, where $\QQ_S = \prod_{\ell \in S} \QQ_\ell$;
  \item $\Phi_S \in C^\infty_c(\QQ_S^2, L)$, lying in the $\left(\widehat{\chi}_0 \widehat{\chi}_2\right)^{-1}$-eigenspace for $\ZZ_S^\times$.
  \end{itemize}
  We let $K_S$ be the quasi-paramodular subgroup of $G(\QQ_S)$ of level $(N_0, M_0)$; and we let $\widehat{K}_S$ be some open compact subgroup of $G(\QQ_S)$ such that:
  \begin{itemize}
   \item $\widehat{K}_S \subseteq \gamma_{0, S} K_S \gamma_{0, S}^{-1}$,
   \item the projection of $\widehat{K}_S \cap H$ to the first factor of $H$ acts trivially on $\Phi_S$,
   \item  the projection of $\widehat{K}_S \cap H$ to the second factor of $H$ is contained in $\{ \stbt \star\star0 1 \bmod N_2\}$.
  \end{itemize}
  We define $K^p$ and $\widehat{K}^p$ to be the products of $K_S$ and $\widehat{K}_S$ with $G(\Af^{pS})$, and $\Phi^{(p)} = \Phi_S \cdot \operatorname{ch}\left((\widehat{\ZZ}^{S \cup \{p\}})^2\right)$.

 \subsection{The correction term $Z_S$}

  Let $\pi$ and $\sigma$ be cohomological cuspidal automorphic representations of $G$ and of $\GL_2$, both defined over some number field $E$ contained in the $p$-adic field $L$, and both globally generic and unramified outside $S$. We normalise so these are cohomological with weights $(r_1, r_2; r_1 + r_2)$ and $(t_2; t_2)$ respectively, for some integers $r_1, r_2, t_2$; and we let $\Pi$ and $\Sigma$ be the unitary twists of $\pi$ and $\sigma$ respectively, so that
  \[ L(\Pi \times \Sigma, s) = L(\pi \times \sigma, s + \tfrac{r_1 + r_2 + t_2}{2}).\]

  \begin{definition}
   For $W_0 \in \cW(\pi)_E$, $W_2 \in \cW(\sigma)_E$, and $\Phi \in \cS(\QQ_S^2, E)$, we consider the zeta-integral
   \[ Z(W_0, \Phi, W_2; s) = \int_{(Z_G N_H \backslash H)(\QQ_S)} W_0(h) f^{\Phi}(h_1; \omega_{\pi}\omega_\sigma, s) W_2(h_2) \, \mathrm{d}h. \]
  \end{definition}

  We shall set
  \[
   Z_S(\pi \times \sigma, \gamma_S; s) =
    \frac{Z(\gamma_{0, S} \cdot W_0^{\new}, \Phi_S, W_2^\new; s)}{G(\chi_2^{-1})\prod_{\ell \in S} L(\pi_\ell \times \sigma_\ell, s)},
  \]
  and
  \[ Z_S(\pi \times \sigma, \gamma_S) = Z_S(\pi \times \sigma, \gamma_S; 1 + \tfrac{t_1}{2})\]
  where $t_1 = r_1 -r_2-2-t_2$ as usual. Here $G(\chi) = \sum_{a \bmod N_\chi} \chi(a) \exp(2\pi i a / N_\chi)$ is the Gauss sum of the character $\chi$. One can check that this is a product of polynomials in the variables $\ell^{\pm s}$, for $\ell \in S$, with coefficients in $E$.

  \begin{proposition}
   For any given $\pi, \sigma$, one can choose $\gamma_S$ such that $Z_S(\pi \times \sigma, \gamma_S; s) \ne 0$.
  \end{proposition}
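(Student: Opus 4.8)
\subsection*{Proof proposal}

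The plan is to reduce the global statement to a local non-vanishing assertion at each place of $S$ and then prove the latter by concentrating the support of the test data. First, since the new vectors $W_0^{\new}$ and $W_2^{\new}$ are pure tensors over the places of $S$, we may take $\Phi_S = \bigotimes_{\ell \in S} \Phi_\ell$ and $\gamma_{0,S} = (\gamma_{0,\ell})_{\ell \in S}$, and then the integration domain $(Z_G N_H \backslash H)(\QQ_S) = \prod_{\ell \in S}(Z_G N_H \backslash H)(\QQ_\ell)$ factors and the integrand is a product, so that
\[
 Z(\gamma_{0,S} W_0^{\new}, \Phi_S, W_2^{\new}; s) = \prod_{\ell \in S} Z_\ell\bigl(\gamma_{0,\ell} W_{0,\ell}^{\new}, \Phi_\ell, W_{2,\ell}^{\new}; s\bigr).
\]
The normalising denominator $G(\chi_2^{-1}) \prod_{\ell \in S} L(\pi_\ell \times \sigma_\ell, s)$ is a nonzero (meromorphic) function of $s$, so it suffices to choose, for each $\ell \in S$ separately, local data $(\gamma_{0,\ell}, \Phi_\ell)$ for which $Z_\ell(\gamma_{0,\ell} W_{0,\ell}^{\new}, \Phi_\ell, W_{2,\ell}^{\new}; s)$ is not identically zero in $s$. (The constraint that $\Phi_S$ lie in the $(\widehat{\chi}_0 \widehat{\chi}_2)^{-1}$-eigenspace for $\ZZ_S^\times$ is automatic, as that eigenspace condition is preserved by the local constructions and matches the central characters of $\pi_\ell$ and $\sigma_\ell$.)

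Fix $\ell \in S$ and work in the range $\Re(s) \gg 0$, where the local integral converges absolutely with locally constant integrand. I would use three sources of freedom. First, the map $\Phi_\ell \mapsto f^{\Phi_\ell}(-; \omega_\pi \omega_\sigma, s)$ from $\cS(\QQ_\ell^2)$ onto the degenerate principal series of $\GL_2(\QQ_\ell)$ is surjective for $s$ in general position, so $f^{\Phi_\ell}$ can be made a section supported (modulo the Borel, i.e.\ on $\mathbf{P}^1$) in an arbitrarily small neighbourhood of a point in the big cell. Second, translating $W_2^{\new}$ shows that $W_{2,\ell}$ ranges over all of $\cW(\sigma_\ell)$, so by genericity of $\sigma_\ell$ (Kirillov model) it can be chosen non-vanishing near any generic point. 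Third, translating $W_0^{\new}$ by $\gamma_{0,\ell} \in G(\QQ_\ell)$ and using that $\pi_\ell$ is generic — so its Whittaker functions do not vanish identically on any nonempty open subset of $H(\QQ_\ell)$ meeting the open $H$-orbit — one arranges that $h \mapsto W_0(\gamma_{0,\ell} h)$ is non-vanishing on a prescribed small neighbourhood of a generic point $h^0 \in (Z_G N_H \backslash H)(\QQ_\ell)$. Choosing all three supports to match over a single small neighbourhood of $h^0$ on which the factors are non-vanishing and leave no room for cancellation, the integrand becomes a nonzero function of definite sign supported on that neighbourhood, whence $Z_\ell(s) \neq 0$ for $s$ real and large, so $Z_\ell(-; s) \not\equiv 0$.

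The main obstacle is the third point: showing that the restriction to $H \cong \GL_2 \times_{\GL_1} \GL_2$ of the $\GSp_4$-Whittaker function can be made non-vanishing near a generic point, with support small enough to defeat cancellation against the other two factors. Equivalently, this is the statement that the $H(\QQ_\ell)$-equivariant trilinear form defined by the local integral is not identically zero in $s$, which rests on the non-degeneracy near the open $H$-orbit of the restriction-to-$H$ of the Whittaker model of the generic representation $\pi_\ell$. This can be checked directly, or extracted from the local theory of the $\GSp_4 \times \GL_2$ Rankin--Selberg integral as developed in \cite{LPSZ1}; everything else is a routine support-concentration argument together with the factorisation over $S$.
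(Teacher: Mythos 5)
Your approach is correct but genuinely different from the paper's. The paper disposes of this in one line: the local $L$-factor is \emph{defined} as a generator of the fractional ideal of $\CC[\ell^s,\ell^{-s}]$ spanned by the local zeta-integrals $Z_\ell(W_0,\Phi_\ell,W_2;s)$ as the test data vary, so the ratios $Z_\ell/L_\ell$ cannot all vanish; combined with the observation that translates of $W_0^{\new}$ (resp.\ $W_2^{\new}$) span the Whittaker model, and that translating $W_2$ can be absorbed into translating $W_0$ by the invariance of the integral, this gives the statement. Your route instead proves local non-vanishing directly by a support-concentration argument in the region of absolute convergence. That works, but you make it harder than necessary: rather than invoking ``genericity of $\pi_\ell$'' to find a generic point of the open $H$-orbit where the restricted Whittaker function is non-zero (a statement you rightly flag as needing justification), just use $W_0^{\new}(1)=1$ and local constancy to get non-vanishing of $W_0^{\new}$ on a small neighbourhood of the identity, then shrink the supports of $f^{\Phi_\ell}$ and $W_{2,\ell}$ to that neighbourhood; all three factors are then locally constant and non-zero there, so the integral is a non-zero constant times a volume. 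With that simplification your argument is self-contained and in some ways more elementary than the paper's, at the cost of not exhibiting the relationship to the $L$-factor that the GCD definition makes transparent. One minor caveat: your parenthetical that the eigenspace condition on $\Phi_S$ is ``automatic'' deserves a sentence --- it follows because the remaining factors transform by the inverse character under the centre, so one can project $\Phi_S$ to the relevant eigenspace without killing the integral.
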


  \begin{proof}
   This follows from the definition of the $L$-factor as a GCD of local zeta-integrals.
  \end{proof}

 \subsection{P-adic families for $G$}

  Let $U \subset \cW^2$ be an open affinoid disc; and let $\br_1$, $\br_2: \Zp^\times \to \cO(U)^\times$ be the universal characters associated to the two factors of $\cW^2$. Let $\nu_U$ be the character $(\br_1, \br_2; \br_1+\br_2)$ of $T(\Zp)$.

  The theory of \cite{boxerpilloni20} shows that there exists a rigid space $\cE \xrightarrow{\kappa} \cW^2$, with a map $\mathbb{T}^- \to \cO(\cE)$ (the eigenvariety for $G$), and graded coherent sheaves $H^k(\cM^{\bullet,-,\fs}_{\cusp, w_j})$ on $\cE$ for $0 \le j, k \le 3$, whose pushforward to any affinoid $U\subset \cW^2$ as above is $H^k_{w_j,\an}(K^p, \nu_U, \cusp)^{(-,\fs)}$. By construction, the points of $\cE$ biject with systems of $\mathbb{T}^-$-eigenvalues appearing in one of these modules.

  \begin{definition}
  By a \emph{family of automorphic representations} $\upi$ over $U$ (of tame level $N_0$ and character $\chi_0$), we mean the data of a finite flat covering $\tilde{U} \to U$, and a homomorphism $\tilde{U} \to \cE$ lifting the inclusion $U \into \cW$, such that the following conditions hold:
  \begin{itemize}
  \item $\tilde{U}$ is 2-dimensional and smooth;
  \item the restriction of the sheaf $H^k(\cM^{\bullet,-,\fs}_{\cusp, w_j})$ to $\tilde{U}$ is zero if $k \ne 3-j$, and the sheaves $S^k(\upi) = H^k(\cM^{\bullet,-,\fs}_{\cusp, w_{3-k}})$ are either free over $\cO(\tilde{U})$ of rank 1 for all $k$ (a general-type family), or free of rank 1 for $k = 1,2$ and zero for $k = 0, 3$ (a Yoshida-type family);
  \item the centre of $G(\Af^p)$ acts on the modules $S^k(\upi)$ by the character $|\cdot|^{-(\br_1 + \br_2)} \widehat{\chi}_0$.
  \end{itemize}
  \end{definition}

  Such a family determines a $\cO(\tilde{U})$-valued character $\lambda_{\upi}^-$ of $\mathbb{T}^-$, which is the system of eigenvalues by which $\mathbb{T}$ acts on the modules $H^k(\cM^{\bullet,-,\fs}_{\cusp, w_j})$; conversely, the character $\lambda_{\upi}^-$ and ring extension $\cO(\tilde U)$ of $\cO(U)$ uniquely determine $\upi$.

  \begin{definition}
  We say a point $P \in \tilde{U}(L)$ is ``good for $\upi$'' if the following conditions hold:
  \begin{itemize}
   \item the weight of $P$ is $(r_1, r_2) \in U \cap \ZZ^2$ with $r_1 \ge r_2 \ge -1$;
   \item the specialisation at $P$ of the system of eigenvalues $\lambda^-_{\upi}$ is the character of $\mathbb{T}^-$ assocated to a $p$-stabilised automorphic representation $\pi_P$, which is cuspidal, globally generic, and has conductor $N_0$ and character $\chi_0$;
   \item the fibre of $S^2(\upi)$ at $P$ maps isomorphically to the $\pi_P$-eigenspace in the classical $H^2(K^p, \kappa_1(\nu), \cusp)$; in particular, this eigenspace is 1-dimensional.
  \end{itemize}
  \end{definition}

  \begin{remark}
   Note that we do not suppose that the $\pi_P$ \emph{generalised} eigenspace be 1-dimensional, and this will not hold when $\tilde{U} \to U$ is ramified at $P$.
  \end{remark}

  By the classicity theorems for higher Coleman theory recalled above, given a family $\upi$, all specialisations of integer weight $(r_1, r_2)$ with $r_1 - r_2$ and $r_2$ sufficiently large relative to the slope of $\upi$ will be good; and if $\upi$ is ordinary, it suffices to assume that $r_1-r_2 \ge 3$ and $r_2 \ge 0$.

  We shall choose a basis $\uet$ of $S^2(\upi)$. Since the spaces of higher Coleman theory (of varying levels) have an action of $G(\Af^p)$, we can make sense of $\gamma_{0, S} \cdot \uet$ as a family of classes at tame level $\widehat{K}^p$, which is still an eigenfamily for the Hecke operators away from $S$.

 \subsection{Families for $\GL_2$}

  Similarly, we choose a disc $U' \subset \cW$, a finite flat covering $\tilde{U}'\to U'$ with $\tilde{U}'$ smooth, and a finite-slope overconvergent $p$-adic family of modular eigenforms $\cG$ over $\tilde{U}'$ (of weight $\mathbf{t_2} + 2$ where $\mathbf{t_2}$ is the universal character associated to $U'$). We suppose that this family is new away from $p$ of tame level $N_2$, and nebentype character $\chi_2$.

  We say a point $Q \in \tilde{U}'$ is ``good for $\cG$'' if it lies above an integer $t \in U' \cap \ZZ_{\ge -1}$, and the specialisation of $\cG$ at $Q$, which is \emph{a priori} an overconvergent form of weight $t + 2$, is in fact a classical form. (This is automatic if $t$ is sufficiently large compared to the slope of $\cG$.) We write $\sigma_t$ for the corresponding automorphic representation (normalised to have central character $|\cdot|^{-t} \widehat{\chi}_2$); and we formally write $\usi$ for the collection of the $\sigma_t$ for varying $t$.

  A mildly irritating detail is that if $\cG$ is normalised to have $a_1(\cG) = 1$, and $t$ is a good specialisation, then $\cG_t$ has $q$-expansion coefficients in some number field $E$; but the modular form $\cG_t$ is not defined over $E$ as a coherent cohomology class, since the cusp $\infty$ on $X_1(N)$ is not defined over $\QQ$ (with our conventions). However, the class $G(\chi_2^{-1})\cG_t$ is $E$-rational. We write $S^0(\sigma_t, E)$ for the $E$-vector space spanned by this form, and similarly $S^0(\usi)$ for the $\cO(\tilde{U}')$-module of overconvergent cusp forms generated by $G(\chi_2^{-1}) \cG$.

  \begin{remark}
   Note that by definition $S^0(\usi)$ is free of rank 1, and its fibre at any good specialisation is in the image of the classical $H^0$ (because of the $q$-expansion principle for $p$-adic modular forms). Hence we do not need any auxiliary hypotheses about local freeness of sheaves.
  \end{remark}

 \subsection{Deforming eigenforms}

  Conversely, we say a classical ($p$-stabilised) automorphic representation $\pi$, of some weight $\nu$, is \emph{deformable} if we can find a disc $U$ containing $\nu$, a family $\upi$ over some covering $\tilde{U} / U$, and some $Q \in \tilde{U}$ above $\nu$, such that $Q$ is good for $\upi$ and the specialisation there is $\pi$. The arguments of \cref{sect:families} show that any generic $\pi$ of cohomological weight, with a regular $p$-stabilisation of sufficiently small slope, will be deformable in the above sense (and we may suppose that $\tilde{U}=U$); again, if $\pi$ is ordinary, it suffices to suppose that $r_1 - r_2 \ge 3$ and $r_2 \ge 0$.

  For $\GL_2$ we are in much better shape (partly because $\GL_2$ is better understood than $\GSp_4$, and partly because our definition of ``family'' is less restrictive): any classical $p$-stabilised newform of integer weight and Iwahori level at $p$ will be deformable, even in the worst-case scenario of non-$p$-regular weight 1 forms, since we may take $\tilde{U}'$ to be a neighbourhood of $\sigma$ in the normalisation of the eigencurve. Moreover, if $\sigma$ is ordinary and has weight $\ge 2$, we may suppose $\tilde{U}' = U'$.

  \begin{remark}
   We also expect that there exist interesting examples of deformable $\pi$ for $G$ which do not satisfy these stringent conditions. It seems likely that the extra generality of a finite flat covering of weight space will be genuinely necessary, at least in the non-regular-weight case $r_2 = -1$. However, for simplicity of notation we shall assume $\tilde{U} = U$ and $\tilde{U}' = U'$ henceforth; extending these arguments to the general case is straightforward and we leave this to the reader.
  \end{remark}

  \subsubsection*{Families over $U \times U'$} Let $A = \cO(U \times U')$. We have two canonical $A$-valued characters of $T(\Zp)$: the canonical character $\nu_A = (\br_1, \br_2; \br_1 + \br_2)$, and the character $\tau_A = (\bt_1, \bt_2; \br_1 + \br_2)$ defined as follows: $\bt_2$ is the canonical character of $U'$ as above, and $\bt_1 = \br_1 - \br_2 - 2 - \bt_2$ and the action of the centre are determined by the requirement that $\nu_A$ and $\tau_A$ be ``compatible'' in the sense of \cref{def:compat}. Then we can consider
  \[ \cE^{\Phi^{(p)}}(0, \bt_1 + 1) \boxtimes G(\chi_2^{-1}) \cG^{[p]} \in H^0_{\id, \an}(\cS_{H, \Iw}(p^2), \tau_{A})^{+, \dag},\]
  where the tame level is taken to be $H \cap \widehat{K}^p$.

  \begin{definition}
   We let $\cL_{p, \gamma_S}(\upi \times \usi; \uet)$ denote the element of $A$ defined by
   \[ \left\langle \hat\iota^*\left(\gamma_{0, S} \cdot \uet\right), \cE^{\Phi^{(p)}}(0, \bt_1 + 1) \boxtimes G(\chi_2^{-1}) \cG^{[p]}\right\rangle.\]
  \end{definition}

  (The product denotes the Serre duality pairing at level $\widehat{K}^p \cap H$, normalised by a factor $\operatorname{vol}(\widehat{K}^p \cap H)$ in order to make it independent of the choice of $\widehat{K}_p$.)
  \begin{definition} \
   \begin{itemize}
    \item We say a point $(P, Q)$ of $U \times U'$ is \emph{good} if $P = (r_1, r_2)$ and $Q = (t_2)$ are integer points, with $P$ good for $\upi$ and $Q$ good for $\usi$.

    \item We say $(P, Q)$ is \emph{good critical} if we also have $t_2 \le r_1 -r_2 - 1$ (i.e.~the specialisation $t_1$ of $\bt_1$ at $(P, Q)$ is $\ge -1$).

    \item If instead we have $r_1 - r_2 \le t_2 \le r_1$, we say $P$ is \emph{good geometric}.
   \end{itemize}
  \end{definition}

  One checks easily that any integer point $(r_1, r_2, t_2)$ is the limit of a sequence of good geometric (or good critical) points, so if we exclude the pathological case when $(U \times U') \cap \ZZ^3$ is empty, then the sets of good critical points and of good geometric points are both Zariski-dense in $U \times U'$.

 \subsection{Values in the critical range}

  \begin{definition}
   For $(P, Q) = (r_1, r_2, t_2) \in U \times U'$ a good critical point, we define a degree 8 Euler factor
   \[
    \cE_p(\pi_P \times \sigma_Q) = \left(1 - \tfrac{p^{r_1 + 1}}{\alpha \fa}\right)\dots \left( 1- \tfrac{p^{r_1 + 1}}{\beta \fb}\right) \left(1 - \tfrac{\gamma \fa}{p^{r_1 + 2}}\right)\dots\left(1 - \tfrac{\delta \fb}{p^{r_1 + 2}}\right).
   \]
   where $\alpha, \dots, \delta$ are the Hecke parameters of $\pi_P$, and $\fa,\fb$ the Hecke parameters of $\sigma_Q$ (so that $\fa\fb = p^{t_2 + 1} \chi_2(p)$).
  \end{definition}

  \begin{proposition}
   If $\pi_P$ is ordinary, then $\cE_p(\pi_P\times \sigma_Q) \ne 0$.
  \end{proposition}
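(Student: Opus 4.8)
The plan is to show that none of the eight factors in $\cE_p(\pi_P\times\sigma_Q)$ equals $1$, by estimating each of them both $p$-adically and archimedeanly. Throughout, label the Hecke parameters $\alpha,\beta,\gamma,\delta$ of $\pi_P$ as in the ordinary $p$-stabilisation, and enlarge $L$ so that it contains $\alpha,\dots,\delta,\fa,\fb$; write $v_p$ for the valuation normalised by $v_p(p)=1$ and $|\cdot|_\infty$ for the complex absolute value under a fixed embedding $\overline{L}\into\CC$.

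First I would record the two sets of local inputs. Since $\pi_P$ is ordinary, $\cU'_{\Sieg}$ and $\cU'_{\Kl}$ act by $p$-adic units; by \cref{sect:pstab} these eigenvalues are $\alpha$ and $\alpha\beta/p^{r_2+1}$, and together with $\alpha\delta=\beta\gamma=p^{r_1+r_2+3}\chi_\pi(p)$ (where $\chi_\pi(p)$ is a root of unity, as $\pi_P$ is unramified at $p$) this forces
\[ v_p(\alpha)=0,\quad v_p(\beta)=r_2+1,\quad v_p(\gamma)=r_1+2,\quad v_p(\delta)=r_1+r_2+3. \]
On the other hand $\sigma_Q$ is a cuspidal automorphic representation of $\GL_2$ of cohomological weight $t_2$, unramified at $p$ (the tame level $N_2$ of $\cG$ being prime to $p$), so $\fa,\fb$ are algebraic integers with $\fa\fb=p^{t_2+1}\chi_2(p)$, whence $0\le v_p(\fa),v_p(\fb)\le t_2+1$. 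Moreover, by Deligne's bound $|\fa|_\infty=|\fb|_\infty=p^{(t_2+1)/2}$, and by the Weil bound for the Hecke parameters of $\pi_P$ — equivalently, the purity of the Galois representation $V(\pi_P)$, which is pure of weight $r_1+r_2+3$ — each of $\alpha,\beta,\gamma,\delta$ has complex absolute value $p^{(r_1+r_2+3)/2}$.

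Now set $d=r_1-r_2-t_2=t_1+2$, so $d\ge 1$ since $(P,Q)$ is \emph{good critical}. For each of the four ``numerator'' factors $1-\tfrac{p^{r_1+1}}{\xi\phi}$ with $\xi\in\{\alpha,\beta\}$ and $\phi\in\{\fa,\fb\}$ we have $v_p(\xi\phi)\le(r_2+1)+(t_2+1)$, so
\[ v_p\!\left(\tfrac{p^{r_1+1}}{\xi\phi}\right)\ge(r_1+1)-(r_2+t_2+2)=d-1,\qquad \left|\tfrac{p^{r_1+1}}{\xi\phi}\right|_\infty=p^{(d-2)/2}. \]
If $d\ge 2$ the valuation is $\ge 1$, and if $d=1$ the absolute value is $p^{-1/2}$; either way the factor is $\ne 1$. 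For each of the four ``denominator'' factors $1-\tfrac{\xi\phi}{p^{r_1+2}}$ with $\xi\in\{\gamma,\delta\}$ and $\phi\in\{\fa,\fb\}$ we compute
\[ \left|\tfrac{\xi\phi}{p^{r_1+2}}\right|_\infty=p^{(r_1+r_2+3)/2+(t_2+1)/2-(r_1+2)}=p^{-d/2}\le p^{-1/2}<1, \]
so this factor is also $\ne 1$. As $\cE_p(\pi_P\times\sigma_Q)$ is the product of these eight nonzero numbers, it is nonzero.

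The slope bookkeeping is routine; the one point that requires care is that a pure $p$-adic count does not suffice on its own. Precisely, at the critical boundary $d=1$ (i.e.\ $t_1=-1$), and — more pervasively — whenever $\sigma_Q$ has a unit Hecke eigenvalue (as at a generic point of an ordinary family $\cG$), one of the eight factors has $p$-adic valuation exactly $0$, and ruling out its vanishing forces one to use the archimedean estimate above, hence the purity of $V(\pi_P)$ (equivalently, the temperedness of $\pi_P$). This archimedean input is the only ingredient of real substance in the argument.
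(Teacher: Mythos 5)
Your proof is correct and is a carefully written-out version of the paper's one-line argument, which simply appeals to ``a (somewhat tedious) explicit check from the bounds on the valuations of the Hecke parameters.'' Your observation that the $p$-adic slope bounds alone do not suffice, and that the archimedean (purity/Ramanujan) estimate is genuinely needed --- in particular for the four factors $1-\xi\phi/p^{r_1+2}$ with $\xi\in\{\gamma,\delta\}$, where $\xi\phi/p^{r_1+2}$ is generically a $p$-adic unit when $\sigma_Q$ is ordinary, and likewise at the boundary case $t_1=-1$ for the numerator factors --- is exactly the point the paper leaves implicit in the word ``valuations.''
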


  \begin{proof}
   This follows by a (somewhat tedious) explicit check from the bounds on the valuations of the Hecke parameters.
  \end{proof}

  \begin{theorem}
   The $p$-adic $L$-function $\cL_{p, \gamma_S}(\upi \times \usi, \uet)$ has the following interpolation property: if $(P, Q)$ is good critical, then
   \[ \frac{\cL_{p, \gamma_S}(\upi \times \usi, \uet)(P, Q)}{\Omega_p(\pi_P, \eta_P)}
   =Z_S(\pi_{P} \times \sigma_{Q}, \gamma_S) \cdot \cE_p(\pi_P \times \sigma_Q) \cdot
   \frac{G(\chi_2^{-1})^2 \Lambda\left(\Pi_P \times \Sigma_Q, 1 + \tfrac{t_1}{2}\right)}{\Omega_\infty(\pi_P, \eta_P)}, \]
   with both sides lying in the field of rationality of $\pi_P\times \sigma_Q$.

   Here $\Pi_P$ and $\Sigma_Q$ are the (unitary) automorphic representations generated by the specialisations of $\uet$ and $\cG$ at $P$; and $\Lambda(\Pi_P\times \Sigma_Q, s)$ denotes the $L$-function of these automorphic representations, with its archimedean $\Gamma$-factors included.
  \end{theorem}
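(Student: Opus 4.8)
The plan is to evaluate the pairing defining $\cL_{p,\gamma_S}$ at a good critical point $(P,Q)$ by reducing it, via \cref{thm:bspairing}, to a classical Serre-duality pairing on the Shimura variety of $H$, and then computing that pairing by unfolding an adelic period integral, the Euler factor $\cE_p$ emerging from the behaviour at $p$.

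First I would specialise at $(P,Q)=(r_1,r_2,t_2)$. Both the pairing of \cref{thm:bspairing} and the pullback $\hat\iota^*$ of \cref{prop:bigsheafpullback} are compatible with specialisation in $A=\cO(U\times U')$; and since a good critical point has $t_1\ge-1$, the factor $\cE^{\Phi^{(p)}}(0,t_1+1)$ specialises to the $p$-depleted \emph{classical} Eisenstein series $E^{t_1+2}_{\Phi^p\Phi_{\mathrm{dep}}}$, so that $E^{t_1+2}_{\Phi^p\Phi_{\mathrm{dep}}}\boxtimes G(\chi_2^{-1})\cG_Q^{[p]}$ is an ordinary (overconvergent) $p$-depleted classical form on $H$. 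Hence
\[
 \cL_{p,\gamma_S}(\upi\times\usi,\uet)(P,Q)
 = \bigl\langle\, \hat\iota^*\bigl(\gamma_{0,S}\cdot\eta_{\Iw,P}\bigr),\ E^{t_1+2}_{\Phi^p\Phi_{\mathrm{dep}}}\boxtimes G(\chi_2^{-1})\,\cG_Q^{[p]}\,\bigr\rangle
\]
is a classical Serre-duality pairing, where $\eta_{\Iw,P}$ is the $p$-stabilised newvector class attached to the specialisation $\pi_P$ and $\cG_Q$ is the classical cusp form underlying the specialisation of $\cG$. One uses \cref{prop:classicalcomp,prop:changesupport} to identify $\hat\iota^*(\eta_{\Iw,P})$, restricted to the canonical locus, with the classical pullback along $\hat\iota$ followed by the branching map on coefficients of \S\ref{sect:branchcoeffs}.

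The substantive step is to evaluate this pairing. Realising $\eta_{\Iw,P}$ as a canonically normalised automorphic form — which accounts for the appearance of the period $\Omega_p(\pi_P,\eta_P)$, and of $\Omega_\infty(\pi_P,\eta_P)$ when the same comparison is performed in the complex realisation (cf.~\cite[\S6.8]{LPSZ1}) — and invoking the period comparison of \cite{LZ20b-regulator}, the pairing becomes an adelic period integral over $H$ of the restriction to $H$ of that form against the product of the (depleted) Eisenstein series and $\cG_Q^{[p]}$. Unfolding this integral and factoring it into local pieces, the unramified and archimedean places contribute $\Lambda(\Pi_P\times\Sigma_Q,1+\tfrac{t_1}{2})$, the places in $S$ contribute $Z_S(\pi_P\times\sigma_Q,\gamma_S)$ (by the local zeta-integral formalism, cf.~\cite{LPSZ1}), and the place $p$ contributes a local factor which, on passing from the depleted local data back to the ``full'' ones, becomes the degree-$8$ Euler factor $\cE_p(\pi_P\times\sigma_Q)$. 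Concretely this last passage uses the adjunction, under cup product, between $U_p$ on $\GL_2\times\GL_2$ and $\cU'_{\Sieg},\cU'_{\Kl}$ on $\GSp_4$ (whose compatibility with $\hat\iota^*$ is as in \cref{prop:classicalcomp}), together with the eigenvalues $\cU'_{\Sieg}\mapsto\alpha$, $\cU'_{\Kl}\mapsto\alpha\beta/p^{r_2+1}$ of $\eta_{\Iw,P}$ and the $U_p$-eigenvalue $\fa$ of $\cG_Q$, assembling the eight resulting ratios of $\{\alpha,\beta,\gamma,\delta\}$ against $\{\fa,\fb\}$. This local computation at $p$ — which is precisely where the $p$-depletion of the data, and hence the confinement to the lower endpoint of the critical range, genuinely intervenes — is the part I expect to be the main obstacle; it is essentially the content of \cite{LZ20b-regulator}, after which what remains is careful bookkeeping of the factors $G(\chi_2^{-1})$, $\operatorname{vol}(\widehat{K}^p\cap H)$, and the change of normalisation between $\eta_{\Iw,P}$, $\eta_{\mathrm{sph},P}$ and $\eta_P$ through the Whittaker vector $W^{\prime,\Iw}_{\alpha,\beta}$.

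Assembling these identities yields the interpolation formula. For rationality, the left-hand side lies in (a finite extension of) the rationality field of $\pi_P$ since $\uet$ is defined there and $G(\chi_2^{-1})\cG_Q$ is $E$-rational by the $q$-expansion principle; on the right, $Z_S(\pi_P\times\sigma_Q,\gamma_S)$ is a polynomial in the $\ell^{\pm s}$ ($\ell\in S$) with $E$-coefficients, $\cE_p(\pi_P\times\sigma_Q)$ is algebraic, and $\Lambda(\Pi_P\times\Sigma_Q,1+\tfrac{t_1}{2})/\Omega_\infty(\pi_P,\eta_P)$ is algebraic, this being a critical value. The asserted equality, being one of algebraic numbers, is then forced by its validity after the complex embedding (the archimedean side of the computation) and after the $p$-adic embedding (the $p$-adic side just established), the two periods $\Omega_p$ and $\Omega_\infty$ measuring the same rational class of $\eta_{\Iw,P}$.
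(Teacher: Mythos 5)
Your proposal takes essentially the same route as the paper: specialise at $(P,Q)$ to reduce the pairing to a classical Serre-duality pairing (using the compatibility with base-change built into \cref{thm:bspairing} and \cref{prop:bigsheafpullback}), then expand it as a $\GSp_4\times\GL_2$ zeta-integral and factor into local pieces, with the unramified places contributing trivially, the places in $S$ giving $Z_S$ by definition, and the place $p$ giving $\cE_p$. Two small inaccuracies are worth flagging: the local zeta-integral computation at $p$ that yields the degree-8 Euler factor is done in \cite{LZ21-zeta2}, not in \cite{LZ20b-regulator} (the latter supplies the geometric-range period formula used in \cref{prop:coherentperiod}); and the heuristic you give for $\cE_p$ -- an ``adjunction'' between $U_p$ and $\cU'_{\Sieg},\cU'_{\Kl}$ fed by their eigenvalues -- is not literally how the Euler factor arises, since both $\cG_Q^{[p]}$ and $\cE^{\Phi^p}(0,t_1+1)$ are $p$-depleted (so in the kernel of $U_p$, not $U_p$-eigenforms); the factor comes out of an explicit local zeta-integral evaluation with these depleted test vectors, which you correctly identify as the nontrivial ingredient to defer to a reference.
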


  \begin{remark}
   Note that $s = 1 + \tfrac{t_1}{2}$ is the upper endpoint of the interval of critical values (in the sense of Deligne) for the degree 8 $L$-function $L\left(\Pi_P \times \Sigma_Q, s\right)$. This critical interval is symmetric about $s = \tfrac{1}{2}$, so unless $t_2 = r_1 - r_2 - 1$ (so that $t_1 = -1$), there are other critical values which we do not see by this method.

   We optimistically hope that there should be a $p$-adic $L$-function on the 4-dimensional space $U \times U' \times \cW$ which interpolates the full range of critical values, and that both the above $p$-adic $L$-function on $U \times U'$, and the 2-variable $p$-adic $L$-function on $U' \times \cW$ (for fixed $\pi$) considered in \cite[\S 5]{LZ20b-regulator}, should be ``slices'' of this more general construction. However, this seems beyond reach with our present methods.
  \end{remark}

  \begin{proof}
   By construction, we have
   \[ \cL_{p, \gamma_S}(\upi \times \usi; \uet)(P, Q) = G(\chi_2^{-1}) \left\langle \hat\iota^*\left(\gamma_{0, S} \cdot \eta_P\right), \cE^{\Phi^{(p)}}(0, t_1 + 1) \boxtimes \cG_P^{[p]}\right\rangle. \]
   This expands as the product of $G(\chi_2^{-1})\Lambda\left(\Pi_P \times \Sigma_Q, 1 + \tfrac{t_1}{2}\right)$ and a product of normalised local zeta-integrals, exactly as in \cite{LPSZ1}. The local integrals away from $pS$ are all 1. The local zeta-integral at $p$ is evaluated in \cite{LZ21-zeta2}, and gives the Euler factor $\cE_p(-)$. The product of zeta-integrals at the bad primes is by definition $G(\chi_2^{-1}) Z_S(\dots)$ and the result follows.
  \end{proof}

 \subsection{Values in the geometric range}

  Suppose $(P, Q)\in U \times U'$ is a point in the good geometric range; and let us set $t_1' = -2-t_1 = t_2 - r_1 + r_2$. Then the ``geometric'' condition implies that $0 \le t_1' \le r_2$, and the quadruple $(r_1, r_2, t_1', t_2)$ satisfies the branching law for algebraic representations defined in \cite[Proposition 6.4]{LPSZ1}, which is the condition needed to define motivic cohomology classes associated to $\pi_P \otimes \sigma_Q$, using the pushforward of a $\GL_2$ Eisenstein class of weight $t_1'$ (see \cite{HJS20}).

  \begin{remark}
   Note that this Euler system class lands in the Galois representation $V_p(\pi \times \sigma)^*(-1-r_1)$, and corresponds to the complex $L$-function $L(\pi \times \sigma, s)$ at $s = -\tfrac{t_1'}{2} = 1 + \tfrac{t_1}{2}$; but this is no longer a critical value, and the Archimedean $\Gamma$-factors force the $L$-function to vanish here to degree exactly one (except in some exceptional cases when $t_1' = 0$ and $\pi$ is a Yoshida lift, when it can happen that the completed $L$-function has a simple pole at $s = 0, 1$).

   The $L$-values having this property are an interval (disjoint from the critical interval, if any) and the value $s = -\tfrac{t_1'}{2}$ is the \emph{upper} end of this interval. So our restriction to using only overconvergent, rather than nearly-overconvergent, Eisenstein series pegs us to the the upper endpoint of the critical interval when $P$ is critical, and to the upper endpoint of the geometric interval when $P$ is geometric.
  \end{remark}

  In \cite[\S 4]{LZ20b-regulator}, we defined an object $\operatorname{Per}_{\eta}(\pi_P \times \sigma_Q)$ associated to $\pi_P \times \sigma_Q$, the choice of twist $t_1'$, and the basis vector $\eta_P \in S^2(\pi_P, L)$. This was an $H(\Af^p)$-equivariant map $\mathcal{T}^p \to L$, where $\mathcal{T}^p = \cW(\pi_{P, \mathrm{f}}^p) \otimes C^\infty_c( (\Af^p)^2) \otimes \cW(\sigma_{Q, \mathrm{f}}^p)$. Our choice of $\gamma_S$ defines a choice of vector
  \[ (\gamma_{0, S} W^{\new}_{\pi_P}) \otimes \Phi_S \otimes W^{\new}_{\sigma_Q} \in \mathcal{T}^p \]
  and we write $\operatorname{Per}_{\eta}(\pi \times \sigma, \gamma_S) \in L$ for the value of $\operatorname{Per}_{\eta}(\pi \times \sigma)$ on this vector.

  \begin{remark}
   If $t_1 \ne 0$, then one can check that the space of $H(\Af^p)$-equivariant maps in which $\operatorname{Per}_{\eta}(\pi_P \times \sigma_Q)$ lies is in fact 1-dimensional and spanned by the product of zeta integrals used to define $Z_S(\dots)$. It follows that there is a quantity $\operatorname{Per}_{\eta}(\pi_P \times \sigma_Q)^{\mathrm{univ}} \in L$ such that for all $\gamma_S$ we have
   \[  \operatorname{Per}_{\eta}(\pi_P \times \sigma_Q, \gamma_S) = Z_S(\pi_{P} \times \sigma_{Q}, \gamma_S) \operatorname{Per}_{\eta}(\pi_P \times \sigma_Q)^{\mathrm{univ}}. \]
   Similar results also hold for $t_1 = 0$ under some mild additional conditions on $\pi_P$ and $\sigma_Q$; compare Theorem 6.6.2 of \cite{LZ20} in the $\GSp_4$ case. However, we do not need this for the proof of our main theorem, so we shall not pursue it further here.
  \end{remark}

  \begin{proposition}
   \label{prop:coherentperiod}
   We have
   \[
    \cL_{p, \gamma_S}(\upi \times \usi; \uet)(P, Q) =  \operatorname{Per}_{\eta_P}(\pi_P \times \sigma_Q, \gamma_S).
   \]
  \end{proposition}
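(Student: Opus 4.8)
The plan is to unwind the left-hand side using the base-change compatibilities established above and then to recognise the result as the recipe defining $\operatorname{Per}_{\eta_P}$ in \cite{LZ20b-regulator}. First I would specialise the family: the pairing of \cref{thm:bspairing}, the pullback of \cref{prop:bigsheafpullback}, and the branching map of \cref{prop:krakenonsheaves} are all compatible with base change in $A$, so
\[
\cL_{p,\gamma_S}(\upi\times\usi;\uet)(P,Q) = \big\langle \hat\iota^*(\gamma_{0,S}\cdot\eta_P),\ \cE^{\Phi^{(p)}}(0,t_1+1)\boxtimes G(\chi_2^{-1})\cG_Q^{[p]}\big\rangle,
\]
where the pairing is now the cup product of \cref{thm:bspairing} on the overconvergent coherent cohomology of $\cS_{H,\Iw}(p^t)$, with $\hat\iota^*(\gamma_{0,S}\cdot\eta_P)\in H^2_{\id,\an}(\cS_{H,\Iw}(p^t),\tau,\cusp)^{-,\dag}$ and the Eisenstein-times-cusp-form lying in $H^0_{\id,\an}(\cS_{H,\Iw}(p^t),\tau)^{+,\dag}$. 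Here $t_1 = r_1-r_2-2-t_2$ and $t_1' = -2-t_1 = t_2-r_1+r_2$, and the good geometric hypothesis reads $0\le t_1'\le r_2$.

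Next I would identify $\hat\iota^*(\gamma_{0,S}\cdot\eta_P)$ with the classical restriction of $\eta_P$ along $\hat\iota$. Since $P$ is good for $\upi$, the class $\eta_P$ is the image of the classical eigenclass $\eta\in S^2(\pi_P,L)$ under the small-slope embedding of classical into overconvergent cohomology. In the good geometric range the parameter $\lambda = \nu_1-\tau_1 = r_1-t_1 = r_2+t_2+2$ lies in $[0,r_1+r_2+2]$, so the kraken $\mathscr{K}^{\lambda}$ is a polynomial; hence the branching homomorphism of \cref{prop:krakenonsheaves} factors through the algebraic branching law of \cite[Prop.~6.4]{LPSZ1}, which is precisely the branching used to define $\operatorname{Per}$ in \cite{LZ20b-regulator}. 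Combining this with \cref{prop:classicalcomp} (overconvergent pullback followed by corestriction equals classical pullback) and the fact that both $\cG_Q^{[p]}$ and $\cE^{\Phi^{(p)}}(0,t_1+1)$ are $p$-depleted — so the support-condition cup product reduces to the naive pairing on the canonical locus $\pi_H^{-1}(\{\id_H\})$ — the displayed pairing is computed by the classical $\hat\iota$-restriction of $\gamma_{0,S}\cdot\eta$ against $\cE^{\Phi^{(p)}}(0,t_1+1)\boxtimes G(\chi_2^{-1})\cG_Q$.

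Then I would invoke the $\theta$-operator identity: by the Proposition cf.~\cite[Prop.~16.2.1]{LZ20b-regulator}, $\cE^{\Phi^{(p)}}(0,t_1+1) = \cE^{\Phi^{(p)}}(0,-1-t_1') = \theta^{-(1+t_1')}\big(F^{(t_1'+2)}_{\Phi^p\Phi_{\mathrm{dep}}}\big)$. Substituting, the pairing becomes the integral of $\theta^{-(1+t_1')}(F^{(t_1'+2)})\boxtimes \cG_Q$ against the $\hat\iota$-restriction of $\eta_P$; after accounting for the $E$-rational normalisation $G(\chi_2^{-1})\cG$ built into $S^0(\usi)$ and for the tame test vector $(\gamma_{0,S}W^{\new}_{\pi_P})\otimes\Phi_S\otimes W^{\new}_{\sigma_Q}$ determined by $\gamma_S$, this is exactly the value of $\operatorname{Per}_{\eta_P}(\pi_P\times\sigma_Q)$ on that vector, i.e.\ $\operatorname{Per}_{\eta_P}(\pi_P\times\sigma_Q,\gamma_S)$, as defined in \cite[\S4]{LZ20b-regulator}.

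I expect the main obstacle to be the second step. The ``Note'' after \cref{prop:bigsheafpullback} only asserts compatibility with the classical pullback for algebraic weights with $t_1,t_2\ge -1$, whereas in the geometric range $t_1\le -2$; and \cref{prop:classicalcomp} only identifies the \emph{corestriction} of the overconvergent pullback with the classical one. The reduction to the canonical locus, and hence the matching of the higher-Coleman cup product with the geometric pairing of \cite{LZ20b-regulator} (including the twist $\hat\gamma=\gamma w_1$, the Gauss-sum normalisations, and the $[p]$-depletions), therefore rests on carefully using the $p$-depletion of both the Eisenstein series and the cusp form to annihilate the difference between the support-condition pairing and the naive one. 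Making this bookkeeping precise — and checking that the resulting recipe is identical on the nose to the one in \cite{LZ20b-regulator}, which is phrased in a slightly different framework — is the technical heart of the argument.
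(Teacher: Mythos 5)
The paper's proof is short and hinges on one crucial input that your proposal omits entirely: the definition of $\operatorname{Per}_{\eta_P}(\pi_P\times\sigma_Q)$ in \cite{LZ20b-regulator} is formulated with a pullback $\iota^*_{\Kl}$ at \emph{Klingen} level, applied to a \emph{Klingen-stabilised} vector $\eta_{P,\Kl}$, while your left-hand side — the specialisation of the $p$-adic $L$-function — is computed from the Iwahori-level pullback $\hat\iota^*$ of the Iwahori-stabilised vector $\eta_{\Iw}$. These are different test data at $p$, and they do \emph{not} agree tautologically. The paper handles the match by quoting the local zeta-integral calculation of \cite{LZ21-zeta2}, which shows that one may replace $\iota^*_{\Kl}(\eta_{\Kl})$ by $\hat\iota^*(\eta_{\Iw})$ without changing the value of the pairing. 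Your proposal jumps from ``the pairing is computed by the classical $\hat\iota$-restriction'' straight to ``this is exactly $\operatorname{Per}_{\eta_P}(\pi_P\times\sigma_Q,\gamma_S)$'', without acknowledging that the object defined in \cite{LZ20b-regulator} lives on a different Shimura variety at $p$ and with a different $p$-stabilisation. That identification is the substantive content of the proposition, not a formality.

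Your surrounding bookkeeping — specialisation via base change compatibility of the pairing, the Eichler–Shimura-style classicity of $\eta_P$, the $\theta$-operator identity matching $\cE^{\Phi^{(p)}}(0,t_1+1)$ with $\theta^{-(1+t_1')}(F^{(t_1'+2)}_{\Phi^p\Phi_{\mathrm{dep}}})$, and the use of $p$-depletion to reduce to a pairing on the canonical locus — is all consistent with what the paper implicitly relies on, and your worry about the compatibility note holding only for $t_1\ge -1$ is a fair one. But the obstacle you flag is not the one the proof actually needs to overcome: the real issue is the Klingen/Iwahori discrepancy at $p$, and without invoking \cite{LZ21-zeta2} (or reproving the corresponding local comparison) the argument does not close.
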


  \begin{proof}
   By construction, we have
   \[
    \operatorname{Per}_{\eta_P}(\pi_P \times \sigma_Q, \gamma_S) = \left\langle \iota^*_{\Kl}\left(\gamma_{0, S}\cdot \eta_{P, \Kl}\right), \theta^{-{t_1'}}\left(F^{(t_1' + 2)}_{\Phi^p \Phi_{\mathrm{dep}}}\right) \boxtimes G(\chi_2^{-1}) \cG^{[p]}\right\rangle, \]
   where $\iota_{\Kl}$ is an embedding of Shimura varieties at Klingen level. The term on the right-hand side is exactly the specialisation at $P$ of our family of $p$-adic modular forms for $H$. From the zeta-integral computations of \cite{LZ21-zeta2}, we may replace $\iota^*_{\Kl}\left(\eta_{\Kl}\right)$ with $\hat\iota^*\left(\eta_{\Iw}\right)$ without changing the value of the pairing.
  \end{proof}

\section{Families of cohomology classes}

 We persist with the notation and assumptions of the previous section. We also suppose that the family $\upi$ is not of Yoshida type, so that for each classical specialisation $P$, the $\lambda_{P}$-eigenspace in \'etale cohomology of $S_{G, K^p\Iw(p)}$ is 4-dimensional. We suppose furthermore that $\upi$ and the $\GL_2$ family $\usi$ are ordinary at $p$.

 \subsection{Galois representations}

  Associated with the family $\upi$ we have a family of Galois representations $V(\upi)$, which is a rank 4 $\cO(U)$-module with an action of $\Gal(\overline{\QQ}/\QQ)$, unramified outside $pN_0$ and satisfying $\operatorname{tr}(\operatorname{Frob}_\ell^{-1} | V(\upi)) = \lambda(T_{1,\ell})$ for $\ell \nmid pN_0$.

  The existence of this family is a consequence of the results of \cite{tilouineurban99}, who also give a canonical realisation of the dual representation $V(\upi)^*$ as a localisation of the module
  \[ e'_B \cdot \varprojlim_t H^3_{\text{\textup{\'et}}, c}\left(\cS_{G, K^pK_{p, t}, \overline{\QQ}}, \Zp\right) \otimes_{\Zp[[\Zp^{\times 2}]]} \cO(U), \]
  where $K_{p, t}$ is some family of subgroups of $G(\Zp)$ and $e^-_B$ is the ordinary projector associated to $\cU'_B$. Similarly, there is a 2-dimensional family of Galois representations over $U'$ associated to $\usi$.

  \begin{remark}
   If the family $\upi$ has a classical specialisation whose weight is sufficiently regular, but small relative to $p$ (and some additional hypotheses hold regarding the image of the residual Galois representation), then the results of \cite{mokranetilouine02} and \cite{rockwood-control} imply that $V(\upi)$ is free of rank 4 over $\cO(U)$.

   Without this condition, we can only deduce that $V(\upi)$ is locally free in a neighbourhood of each good \emph{cohomological} weight, but not necessarily elsewhere. One can work around this by replacing $V(\upi)^*$ with its double dual (reflexive hull), which does not change its specialisations in cohomological weights.
  \end{remark}

  \begin{definition}
   We set
   \[\mathbb{V}^* = V(\upi)^* \times V(\usi)^*(-1-\br_1),\]
   which is an 8-dimensional family of Galois representations over $U \times U'$.
  \end{definition}

 \subsection{Ordinary filtrations at $p$}

  The Galois representation $V(\upi)$ has a decreasing filtration by $\cO(U)$-submodules stable under $\Gal(\overline{\QQ}_p/\Qp)$ (via results of Urban \cite{urban05}; see \cite[Theorem 17.3.1]{LZ20} for the formulation we use). We write $\cF^i V(\upi)$ for the codimension $i$ subspace, and similarly for its dual $V(\upi)^*$. Note that $\Gr^0 V(\upi)^*$ is unramified, with arithmetic Frobenius acting as the $U_{\Sieg}$-eigenvalue. Abusing notation slightly\footnote{What we really mean is that $\Gr^1 V(\upi)^*$ is isomorphic to the tensor product of $\chi_{\mathrm{cyc}}^{(1 + \br_2)}$ and an unramified character.}, we may say that $\Gr^1 V(\upi)^*$ has ``Hodge--Tate weight $1 + \br_2$''.

  Similarly, there is a 2-step filtration of $V(\usi)^*$, with $\Gr^1V(\usi)^* = \cF^1 V(\usi)^*$ having Hodge--Tate weight $1 + \bt_2$.

  \begin{definition}
   We set
   \[\mathbb{V}^* = V(\upi)^* \times V(\usi)^*(-1-\br_1);\]
   and we let
   \[ \cF^{(f)} V(\upi \times \usi)^* = \cF^2 V(\upi) \otimes V(\usi)^*,\]
   and
   \[ \cF^{(e)} V(\upi \times \usi)^* = \left(\cF^2 V(\upi)^* \otimes V(\usi)^*\right) + \left(\cF^1 V(\upi)^* \otimes \cF^1 V(\usi)^*\right). \]
   For a good weight $(P, Q)$ we write $\mathbb{V}_{P, Q}^*$ for the specialisation of $\mathbb{V}^*$ at $(P, Q)$, so $\mathbb{V}_{P, Q}^* = V(\pi_P)^* \otimes V(\sigma_Q)^*(-1-r_1)$ if $P = (r_1, r_2)$.
  \end{definition}

  (For the significance of the labels (e) and (f), see Figure 2 of \cite{LZvista}.) Thus $\cF^{(e)}$ has rank 5, $\cF^{(f)}$ has rank 4, and the quotient $\Gr^{(e/f)} \cong \left(\Gr^1 V(\pi)^*\right)\otimes\left(\cF^1 V(\usi)^*\right)(-1-\br_1)$ has Hodge--Tate weight $\bt_1' = -2-\bt_1$.

  \begin{remark}
   Note that
   \[ \cE_p(\pi_P\times \sigma_Q) = \det \left(1 - \varphi: \Dcris(\cF^{(f)} \mathbb{V}_P^*)\right) \cdot \det\left( 1 - p^{-1}\varphi^{-1}: \Dcris \left(\mathbb{V}_P^* / \cF^{(f)}\right) \right).\qedhere\]
  \end{remark}

 \subsection{P-adic periods}

  The representations $\Gr^1V(\upi)^*(-1-\mathbf{r_2})$ and $\Gr^1 V(\usi)^*(-1-\bt_2)$ are unramified, and hence crystalline as $\cO(U)$ (resp.~$\cO(U')$)-linear representations. Since $\Dcris(\Qp(1))$ is canonically $\Qp$, we can therefore define $\Dcris(\Gr^{(e/f)} \mathbb{V}^*)$ to be an alias for the rank 1 $\cO(U \times U')$-module
  \[
   \Dcris\left(\Gr^1 V(\pi)^*(-1-\mathbf{r_2})\right) \htimes \Dcris\left( \Gr^1 V(\usi)^*(-1-\bt_2)\right).
  \]
  As in \cite[\S 8.2]{KLZ17}, we can define a Coleman/Perrin-Riou big logarithm map for $\Gr^{(e/f)} \mathbb{V}^*$, which is a morphism of $\cO(U \times U')$-modules
  \[ \cL^{\mathrm{PR}}: H^1(\Qp, \Gr^{(e/f)} \mathbb{V}^*) \to \Dcris(\Gr^{(e/f)} \mathbb{V}^*). \]
  By construction, for good geometric weights $P$, this specialises to the Bloch--Kato logarithm map, up to an Euler factor; and for good critical weights it specialises to the Bloch--Kato dual exponential.

 \subsection{P-adic Eichler--Shimura isomorphisms}

  Let $P$ be a good weight. Then the Faltings--Tsuji comparison isomorphism of $p$-adic Hodge theory gives an identification between $\Dcris(V(\pi_P))$ and the $\pi_P$-eigenspace in de Rham cohomology (compatibly with the Hodge filtration); and the graded pieces of this filtration are identified with the coherent cohomology groups $S^i(\pi_P, L)$.

  Since the Hodge and Newton filtrations on $\Dcris$ must be complementary to each other (by weak admissibility), we deduce that there is an \emph{Eichler--Shimura} isomorphism
  \[
   \ES^2_{\pi_P}: S^2(\pi_P, L)\cong \Gr^{(r_2 + 1)}_{\mathrm{Hdg}} \Dcris(V(\pi_P))  \cong \Dcris(\Gr^2 V(\pi_P)).
  \]
  Concretely, the isomorphism is given by mapping an element in $\Gr^{(r_2 + 1)}_{\mathrm{Hdg}} \Dcris(V(\pi_P))$ to its unique lifting to $\Fil^{(r_2 + 1)}_{\mathrm{Hdg}}\Dcris(V(\pi_P)) \cap \ker( (\varphi - \alpha_P)(\varphi - \beta_P))$.

  \begin{remark}
   More generally, we have isomorphisms $\ES^i: S^i(\pi_P, L)\cong \Dcris(\Gr^i V(\pi_P))$ for each $0 \le i \le 3$, where $S^i(\pi_P, L)$ is the $\pi_P$-eigenspace in coherent $H^i$.

   We caution the reader that although the source and target of $\ES^i_{\pi_P}$ are the specialisations at $P$ of rank-one $\cO(U)$-modules, it is \textbf{by no means obvious} that the isomorphisms $\ES^i_{\pi_P}$ for varying $P$ are the specialisations of a single $\cO(U)$-module isomorphism ``$\ES^i_{\upi}$''. We shall establish (a slightly weakened form of) this below, under some additional hypotheses, as a by-product of our main Euler system argument.

   It would be very interesting to have a direct construction of the maps $\ES^i_{\upi}$ by methods of arithmetic geometry. For $i = 0$ (corresponding to classical holomorphic Siegel modular forms) this has been achieved in the recent preprint \cite{diao-rosso-wu21}. One can also obtain $\ES^3_{\upi}$ from this via Serre duality; but it seems to be more difficult to construct the ``intermediate'' filtration steps $i = 1, 2$.
  \end{remark}

  \subsubsection{Analogue for $\GL_2$} Similarly, for $\GL_2$ we have an isomorphism
  \[ \ES^0_{\sigma_Q}: S^0(\sigma_Q, L) \cong \Dcris(\Gr^0 V(\sigma_Q)). \]
  In this setting the existence of comparison isomorphisms in families is known:

  \begin{theorem}[Ohta, Kings--Loeffler--Zerbes]
   There exists an isomorphism of $\cO(U')$-modules
   \[ \ES^0_{\usi}: S^0(\upi) \cong \Dcris(\Gr^0 V(\usi))\]
   interpolating the isomorphisms $\ES^0_{\sigma_Q}$ for varying $P$, where $\cS^0(\upi)$ is the $\cO(U')$-module spanned by $\underline{\omega} = G(\chi_2^{-1}) \cdot \cG$.
  \end{theorem}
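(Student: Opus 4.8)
The plan is to deduce this from Ohta's $\Lambda$-adic Eichler--Shimura isomorphism together with the control theory of Hida families, reconciling normalisations via the comparison carried out in \cite[\S 10]{KLZ17}. Since we have assumed $\tilde{U}' = U'$ and that $\usi$ is ordinary, $\cG$ corresponds to a homomorphism from Hida's big ordinary Hecke algebra to $\cO(U')$, and $V(\usi)^*$ is the base-change to $\cO(U')$ of the localisation of the $\Lambda$-adic \'etale cohomology module $e_{\mathrm{ord}} \varprojlim_r H^1_{\mathrm{et}, c}(Y_1(Np^r)_{\overline{\QQ}}, \Zp)$. By Hida's freeness theorem this is free of rank $2$ over $\cO(U')$ (here, unlike in the $\GSp_4$ setting, no passage to a reflexive hull is needed), so $\Gr^0 V(\usi)$ is free of rank $1$; and since $\Gr^0 V(\usi)$ becomes unramified after twisting by $\chi_{\mathrm{cyc}}^{-(1+\bt_2)}$, and $\Dcris$ is insensitive to unramified twists (using the canonical trivialisation of $\Dcris(\Qp(n))$), the module $\Dcris(\Gr^0 V(\usi))$ is canonically identified with $\Gr^0 V(\usi)$ as an $\cO(U')$-module, and in particular is free of rank $1$.

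First I would invoke Ohta's $\Lambda$-adic Eichler--Shimura isomorphism: Ohta constructs a Hecke-equivariant short exact sequence whose outer terms are (twists of) the module $\mathbf{S}$ of $\Lambda$-adic ordinary cusp forms and whose middle term is the $\Lambda$-adic \'etale cohomology above; crucially, restricted to $G_{\Qp}$ the submodule coincides with the ramified part $\cF^1 V(\usi)^*$, so that the quotient $\Gr^0 V(\usi)^* = V(\usi)^*/\cF^1$ is identified, via this sequence and Poincar\'e duality, with a cyclotomic twist of $\mathbf{S}$. Dualising and keeping track of the Tate twists, and inserting the Gauss-sum factor $G(\chi_2^{-1})$ that relates the $q$-expansion normalisation of $\cG$ to the canonical rational structure at the cusp used in the \'etale realisation, this produces a canonical isomorphism of $\cO(U')$-modules $\ES^0_{\usi} : S^0(\usi) \xrightarrow{\ \sim\ } \Dcris(\Gr^0 V(\usi))$, where $S^0(\usi) = \cO(U') \cdot \underline{\omega}$ with $\underline{\omega} = G(\chi_2^{-1}) \cdot \cG$. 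Because Ohta's sequence is genuinely exact, this is an isomorphism directly, with no need to argue via Zariski density of classical points (which nonetheless provides a consistency check).

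It remains to check that $\ES^0_{\usi}$ specialises at each good point $Q$ (of weight $t_2 + 2 \ge 2$) to the map $\ES^0_{\sigma_Q}$ defined earlier via the Faltings--Tsuji comparison and the unit-root splitting of the Hodge filtration. By Hida's control theorem the specialisation of $\mathbf{S}$ at $Q$ is $S^0(\sigma_Q, L)$, and the $\Lambda$-adic \'etale cohomology specialises to the classical ordinary \'etale $H^1$ of the modular curve; so the specialisation of the construction of the previous paragraph is the classical Eichler--Shimura map for $\sigma_Q$, and the identification of this classical map with the $p$-adic-Hodge-theoretic $\ES^0_{\sigma_Q}$ is exactly the reconciliation carried out in \cite[\S 10]{KLZ17} (see also \cite{ohta99}). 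I expect this last normalisation bookkeeping---matching the Gauss sums, the Tate twists by $\chi_{\mathrm{cyc}}^{1+\bt_2}$, and the Atkin--Lehner/duality signs so that the family-level map really interpolates the analytically-normalised $\ES^0_{\sigma_Q}$---to be the only genuinely delicate point; the rest is a formal consequence of Ohta's theorem and Hida theory.
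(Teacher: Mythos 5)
Your proposal is correct and takes essentially the same route as the paper, which simply cites \cite[Proposition 10.1.1(1)]{KLZ17} (derived from Ohta's $\Lambda$-adic Eichler--Shimura isomorphism \cite{ohta00}) and notes the alternative non-ordinary construction in \cite{andreattaiovitastevens,loefflerzerbes16}. You have merely unpacked the content of that citation --- Ohta's exact sequence, Hida's freeness and control theorems, and the Gauss-sum/Tate-twist normalisation reconciliation of \cite[\S 10]{KLZ17} --- so this is the same argument written out in detail.
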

  \begin{proof}
   This is a restatement of \cite[Proposition 10.1.1(1)]{KLZ17}, where it is derived from results of Ohta \cite{ohta00}. For an alternative derivation applying to possibly non-ordinary Coleman families, see \cite{andreattaiovitastevens,loefflerzerbes16}.
  \end{proof}

 \subsection{Euler system classes}
  Let us suppose that the character $\chi_0 \chi_2$ is non-trivial (this allows us to get rid of a ``smoothing factor'' $c$ appearing in the Euler system constructions). Then, associated to the data $\gamma_S$, we also have a family of cohomology classes
  \[ \mathbf{z}_{m}(\upi \times \usi, \gamma_S) \in H^1(\QQ(\mu_{m}), \mathbb{V}^*), \]
  for all square-free integers coprime to some finite set $T \supseteq S \cup \{p\}$.
  By construction, the image of $ \mathbf{z}_{m}(\upi \times \usi, \gamma_S)$ under localisation at $p$ lands in the image of the (injective) map from the cohomology of $\cF^{(e)} \mathbb{V}^*$. So we may make sense of
  \[  \cL^{\mathrm{PR}}\left( \mathbf{z}_{m}(\upi \times \usi, \gamma_S) \right) \in \Dcris(\Gr^{(e/f)} \mathbb{V}^*). \]
  We denote its image under specialisation at $(P,Q)$ by $ \cL^{\mathrm{PR}}\left( \mathbf{z}_{m}(\upi \times \usi, \gamma_S) \right)(P,Q)$.
  Combining \cref{prop:coherentperiod} with the main result of \cite{LZ20b-regulator}, which relates the periods $\operatorname{Per}_\eta(\dots)$ to the Euler system classes, we have the following result:

  \begin{theorem}\label{prop:geomreg}
   For each $P$ in the good geometric range, we have
   \[ \left\langle \cL\left( \mathbf{z}_{1}(\upi \times \usi, \gamma_S) \right)(P,Q), \mathrm{ES}^2_{\pi_P}(\eta_P) \otimes \mathrm{ES}^0_{\sigma_Q}(\omega_P)\right\rangle = \cL_{p,\gamma_S}(\upi \times \usi; \uet)(P, Q). \]
  \end{theorem}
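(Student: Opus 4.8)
The plan is to combine \cref{prop:coherentperiod} with the main regulator formula of \cite{LZ20b-regulator}, thereby reducing the claimed identity to a comparison of two concretely-defined pairings with values in $\Dcris(\Gr^{(e/f)}\mathbb{V}^*)$. No new deep input is needed; the work is entirely in matching normalisations across the three sources \cite{HJS20}, \cite{LZ20b-regulator} and \cite{boxerpilloni20}.

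First I would use that $(P,Q)$ lies in the good geometric range, so \cref{prop:coherentperiod} rewrites the right-hand side $\cL_{p,\gamma_S}(\upi\times\usi;\uet)(P,Q)$ as the coherent-cohomology period $\operatorname{Per}_{\eta_P}(\pi_P\times\sigma_Q,\gamma_S)$; the proof of that proposition already reconciles the pullback $\hat\iota^*$ used here (with its $\gamma$-twist and the passage from Klingen to Iwahori level) with the one used in \emph{op.cit.}. Next, the explicit reciprocity law of \cite{LZ20b-regulator}, relating the motivic classes of \cite{HJS20} to these periods, expresses $\operatorname{Per}_{\eta_P}(\pi_P\times\sigma_Q,\gamma_S)$ as the image of $\mathrm{loc}_p\,\mathbf{z}_1(\upi\times\usi,\gamma_S)(P,Q)$ under a Bloch--Kato logarithm, paired against the de Rham class attached to $\eta_P$ (for $G$) and $\omega_P$ (for $\GL_2$). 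It then remains to carry out two identifications: (i) the de Rham class appearing there equals $\mathrm{ES}^2_{\pi_P}(\eta_P)\otimes\mathrm{ES}^0_{\sigma_Q}(\omega_P)$; and (ii) the Bloch--Kato logarithm there equals the specialisation $\cL^{\mathrm{PR}}(\mathbf{z}_1(\upi\times\usi,\gamma_S))(P,Q)$.

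For (i), recall that $\mathrm{ES}^2_{\pi_P}$ and $\mathrm{ES}^0_{\sigma_Q}$ were defined precisely via the Faltings--Tsuji comparison and the identification of graded pieces of the Hodge filtration on $\Dcris$ with coherent cohomology groups; so this amounts to checking that the de Rham input to the \cite{LZ20b-regulator} formula is built from the same Hodge-filtration steps ($\Gr^{(r_2+1)}_{\mathrm{Hdg}}$ for $\GSp_4$, $\Gr^0_{\mathrm{Hdg}}$ for $\GL_2$) and the same $\varphi$-eigenvalue normalisations (the unique lift into $\ker((\varphi-\alpha_P)(\varphi-\beta_P))$ on the $G$ side, into $\ker(\varphi-\fa)$ on the $\GL_2$ side), with the same $G(\chi_2^{-1})$-normalisation of $\omega$. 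For (ii), I would invoke the compatibility recalled in the subsection on $p$-adic periods: at a good geometric weight $\cL^{\mathrm{PR}}$ for $\Gr^{(e/f)}\mathbb{V}^*$ specialises to the Bloch--Kato logarithm up to an explicit Euler factor, which is exactly the Euler factor appearing in the \cite{LZ20b-regulator} regulator formula at that weight; together with the fact (noted in the Euler system subsection) that $\mathrm{loc}_p\,\mathbf{z}_1$ lands in the cohomology of $\cF^{(e)}\mathbb{V}^*$, whose image in $\Gr^{(e/f)}$ is exactly the class to which $\cL^{\mathrm{PR}}$ is applied.

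The hard part will be precisely this normalisation bookkeeping rather than any single theorem: tracking the twist $(-1-\br_1)$ and the Tate twists hidden in the definition of $\Dcris(\Gr^{(e/f)}\mathbb{V}^*)$ as a rank-one $\cO(U\times U')$-module; confirming that the Euler-factor discrepancy between $\cL^{\mathrm{PR}}$ and $\log_{\mathrm{BK}}$ in the geometric range cancels against the one in the regulator formula so that no spurious factor survives in the stated identity; and verifying that the $H(\Af^p)$-equivariant period map $\operatorname{Per}_{\eta_P}(\pi_P\times\sigma_Q)$ of \cite{LZ20b-regulator}, evaluated on $(\gamma_{0,S}W^{\new}_{\pi_P})\otimes\Phi_S\otimes W^{\new}_{\sigma_Q}$, corresponds under the identifications above to the left-hand side pairing $\langle\,\cdot\,,\mathrm{ES}^2_{\pi_P}(\eta_P)\otimes\mathrm{ES}^0_{\sigma_Q}(\omega_P)\rangle$. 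Once all of these are pinned down, the proof is a formal chain of equalities.
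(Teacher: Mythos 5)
Your proposal is correct and follows exactly the paper's intended proof, which is given in a single sentence immediately preceding the theorem statement: ``Combining \cref{prop:coherentperiod} with the main result of \cite{LZ20b-regulator}, which relates the periods $\operatorname{Per}_\eta(\dots)$ to the Euler system classes, we have the following result.'' You unpack this combination into its constituent identifications (coherent period via \cref{prop:coherentperiod}, then the regulator formula, then the Eichler--Shimura and $\cL^{\mathrm{PR}}$ normalisations), which the paper leaves implicit but which is the same route.
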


 \subsection{Reciprocity laws and meromorphic Eichler--Shimura}

  \begin{definition}
   Let $\fS(\upi; \usi)$ denote the set of points $P = (r_1, r_2) \in U \cap \ZZ^2$ which are good for $\upi$, and satisfy the following condition: there exists some $t_2 \in U' \cap \ZZ_{\ge 0}$, and some local data $\gamma_S$, such that $(P, Q) = (r_1, r_2, t_2)$ is good geometric and $\cL_{p, \gamma_S}(\upi \times \usi;\uet)$ is non-vanishing at $(P, Q)$.
  \end{definition}

  \begin{lemma}
   Let $\usi, \usi'$ be two Hida families satisfying our running hypotheses (possibly of different tame levels and characters). Then the set $\fS(\upi,\usi) \cap \fS(\upi,\usi')$ is Zariski-dense. In particular, $\fS(\upi,\usi)$ is itself Zariski-dense.
  \end{lemma}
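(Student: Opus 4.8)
The plan is to deduce the lemma from the interpolation formula for $\cL_{p,\gamma_S}(\upi\times\usi;\uet)$ together with elementary Zariski-density arguments on rigid spaces; the one substantive input will be the non-vanishing of a classical Rankin--Selberg $L$-value at a point lying strictly inside its region of absolute convergence. First I would show that, for a suitable choice of tame data $\gamma_S$, the function $f\coloneqq\cL_{p,\gamma_S}(\upi\times\usi;\uet)\in\cO(U\times U')$ is not identically zero. The disc $U\subset\cW^2$ contains integer weights $(r_1,r_2)$ with $r_1\ge r_2\ge 0$ and $r_1-r_2$ arbitrarily large in the archimedean sense, while $U'$ contains a fixed non-negative integer $t_2^\circ$; taking $r_1-r_2\ge t_2^\circ+3$ yields a good critical point $(P,Q)=(r_1,r_2,t_2^\circ)$ at which $t_1=r_1-r_2-2-t_2^\circ\ge 1$, so $1+\tfrac{t_1}{2}>1$ lies in the region of absolute convergence of the (unitary-normalised) degree-$8$ Rankin--Selberg Euler product of $\Pi_P$ and $\Sigma_Q$, where $\Lambda(\Pi_P\times\Sigma_Q,1+\tfrac{t_1}{2})\ne 0$ (the archimedean $\Gamma$-factors being finite and non-vanishing at a critical point). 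Since $\upi$ is ordinary, $\cE_p(\pi_P\times\sigma_Q)\ne 0$; the Gauss sum and the periods $\Omega_p(\pi_P,\eta_P)\in L^\times$, $\Omega_\infty(\pi_P,\eta_P)\in\CC^\times$ are non-zero; and $Z_S(\pi_P\times\sigma_Q,\gamma_S)\ne 0$ for a suitable $\gamma_S$ by the flexibility of the local zeta integrals. By the interpolation formula, $f(P,Q)\ne 0$. The same argument applied to $\usi'$, which lives over a disc $U''\subset\cW$, produces tame data $\gamma_S'$ with $f'\coloneqq\cL_{p,\gamma_S'}(\upi\times\usi';\uet)\in\cO(U\times U'')$ not identically zero.

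Next I would work on $Y\coloneqq U\times U'\times U''$, whose ring of rigid functions is an integral domain (a product of discs being irreducible and reduced), and let $F\in\cO(Y)$ be the product of the pullbacks of $f$ and $f'$ along the projections $Y\to U\times U'$ and $Y\to U\times U''$; by the previous paragraph $F\ne 0$, so $\{F=0\}$ is a proper analytic subset of $Y$. Let $T\subseteq Y$ be the set of integer quadruples $(r_1,r_2,t_2,t_2'')$ such that $(r_1,r_2,t_2)$ is good geometric for $(\upi,\usi)$ and $(r_1,r_2,t_2'')$ is good geometric for $(\upi,\usi')$. Exactly as in the discussion preceding the definition of a good critical point --- using that a $p$-adic disc in $\cW$ contains integers of arbitrarily large archimedean size in every residue class, so that when $r_2$ is large the block $[r_1-r_2,r_1]$ meets every congruence class modulo a fixed power of $p$ --- the set $T$ accumulates, in the rigid topology, at every integer point of $Y$. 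Since the integer points of $Y$ are Zariski-dense (integer weights being Zariski-dense in each disc) and analytic subsets are rigid-closed, $T$ is Zariski-dense in $Y$; and as $Y$ is irreducible, $T\setminus\{F=0\}$ is Zariski-dense in $Y$ as well, for otherwise it would lie in a proper analytic subset $Z$, whence $T\subseteq Z\cup\{F=0\}$ would be contained in a proper analytic subset, a contradiction.

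To conclude, $\pr_U(T\setminus\{F=0\})$ is Zariski-dense in $U$ (a proper analytic $Z'\subsetneq U$ containing it would pull back to a proper analytic subset of $Y$ containing $T\setminus\{F=0\}$). For any $(r_1,r_2,t_2,t_2'')\in T\setminus\{F=0\}$, writing $P=(r_1,r_2)$, non-vanishing of $F$ forces $f(P,t_2)\ne 0$ and $f'(P,t_2'')\ne 0$; combined with the fact that $(P,t_2)$, resp.\ $(P,t_2'')$, is good geometric for $(\upi,\usi)$, resp.\ $(\upi,\usi')$, this shows $P\in\fS(\upi,\usi)$ and $P\in\fS(\upi,\usi')$. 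Hence $\pr_U(T\setminus\{F=0\})\subseteq\fS(\upi,\usi)\cap\fS(\upi,\usi')$, which is therefore Zariski-dense in $U$; the final assertion then follows on taking $\usi'=\usi$.

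I expect the first step to be the main obstacle: the whole argument rests on exhibiting a single good critical specialisation at which the archimedean $L$-value is non-zero, and hence on locating the critical interval relative to the region of absolute convergence of the degree-$8$ $L$-function (and on arranging $Z_S\ne 0$ at the required point). Once $\cL_{p,\gamma_S}(\upi\times\usi;\uet)$ is known not to vanish identically, the remaining steps are routine manipulations with Zariski-density on smooth connected rigid spaces.
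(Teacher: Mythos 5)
Your proposal is correct, and it follows essentially the same route as the paper: establish that each $p$-adic $L$-function is not identically zero by evaluating at a good \emph{critical} point where the Archimedean $L$-value does not vanish, and then use density of good \emph{geometric} integer points to conclude. There are two places where you are marginally more careful than the printed proof. First, you impose $t_1 \ge 1$ rather than $t_1 \ge 0$, so that $1 + \tfrac{t_1}{2}$ lies strictly inside the region of absolute convergence of the degree-$8$ Euler product; this makes non-vanishing of $\Lambda(\Pi_P \times \Sigma_Q, 1 + \tfrac{t_1}{2})$ a triviality, whereas the paper's choice $t_1 \ge 0$ puts $s$ on the boundary $\Re(s)=1$ and tacitly invokes a Jacquet--Shalika-type non-vanishing theorem there. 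Second, your density bookkeeping on the triple product $Y = U \times U' \times U''$ (forming $F$ as a product of pullbacks, showing the set $T$ of good-geometric integer quadruples is Zariski-dense by rigid accumulation at every integer point, and then projecting $T \setminus \{F=0\}$ to $U$) is cleaner and more robustly stated than the paper's argument via the specific sequence $P_k = (r_1 + 3(p-1)p^k, r_2 + (p-1)p^k, t_2 + 2(p-1)p^k)$, which requires $t_2 \ge r_1 - r_2$ to land in the good-geometric cone and so implicitly constrains the choice of base point; you also correctly distinguish $U'$ from $U''$, which the paper elides. These are refinements rather than a genuinely different strategy, and the core non-vanishing input is the same in both.

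One small point worth making explicit when you write this up: the conclusion ``$T$ accumulates at every integer point of $Y$'' together with ``integer points are Zariski-dense'' and ``analytic subsets are rigid-closed'' needs the intermediate observation that a rigid-closed set containing $T$ must contain the rigid closure of $T$, hence all integer points; as phrased, that inference is slightly compressed. Likewise, to run the good-geometric accumulation argument one also needs that the deformed weights remain \emph{good} (classical of the prescribed conductor and character), which for ordinary $\upi$ and $\usi$ follows from the classicity criteria ($r_1 - r_2 \ge 3$, $r_2 \ge 0$, $t_2 \ge -1$) that hold automatically for the far-out members of your arithmetic progressions; this is true, but deserves a sentence.
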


  \begin{proof}
   We first note that there exists $\gamma_S$ for which $\cL_{p, \gamma_S}(\upi \times \usi;\uet)$ is not identically zero. To see this, we choose some good \emph{critical} point $(P, Q)$ having $t_1 = r_1 - r_2 - t_2 - 2 \ge 0$, so that $\Lambda(\pi_P \times \sigma_Q, 1 + \tfrac{t_1}{2})$ lies outside the strip $0 < \Re(s) < 1$ and hence cannot vanish. We can then choose $\gamma_S$ such that $Z_S(\pi_P \times \sigma_Q, \gamma_S) \ne 0$ (which is always possible). Thus $\cL_{p, \gamma_S}(\upi \times \usi)$ is non-vanishing at $(P, Q)$, and hence generically non-vanishing on $U \times U'$.

   Repeating the construction, we can find local data $\gamma_{S}'$ for $\upi \times \usi'$ such that $\cL_{p, \gamma_S'}(\upi \times \usi')$ is generically non-vanishing. So there is an open subset $V \subset U$ such that for all $v \in V$, neither $\cL_{p, \gamma_S}(\upi \times \usi)$ nor $\cL_{p, \gamma_S'}(\upi \times \usi')$ vanishes identically along $\{ v \} \times U'$.

   Since $V$ is open, it must contain some $(r_1, r_2) \in V \cap \ZZ^2$; and we can therefore find an integer $t$ such that both $p$-adic $L$-functions are non-vanishing at $P = (r_1, r_2, t)$. We consider the sequence of weights $P_k = (r_1 + 3(p-1)p^{k}, r_2 + (p-1)p^{k}, t_2 + 2(p-1)p^k)$ for $k \to \infty$. For all but finitely many $k$ the weight $P_k$ will be good geometric, and $P_k$ tends to $P$, so $\cL_{p, \gamma_S}(P_k) \ne 0$ for sufficiently large $k$. Thus the projection of $P_k$ to $U$ lies in $\fS(\upi \times \usi)$, and also in $\fS(\upi \times \usi')$. It follows that $(r_1, r_2)$ is a limit point of $\fS(\upi \times \usi) \cap \fS(\upi \times \usi')$ in the analytic topology. Thus the Zariski-closure of this intersection contains all points of $U \cap \ZZ^2$ outside a proper closed subset, and hence must be all of $U$.
  \end{proof}

  Let us write $\cQ(U)$ for the fraction field of $\cO(U)$ (and similarly for $U \times U'$ etc).

  \begin{theorem}
   There exists an isomorphism of $\cQ(U)$-modules
   \[ \ES^2_{\upi}: S^2(\upi) \otimes_{\cO(U)} \cQ(U) \cong \Dcris(\Gr^2 V(\upi)) \otimes_{\cO(U)} \cQ(U), \]
   depending only on $\upi$, characterised uniquely by the following property: for all Hida families $\usi$ as above, and all $P = (r_1, r_2) \in \fS(\upi, \usi)$, the morphism $\ES^2_{\upi}$ is non-singular at $P$ and its fibre at $P$ coincides with the Eichler--Shimura morphism $\ES^2_{\pi_P}$. Moreover, we have the explicit reciprocity law
   \[
    \left\langle
     \cL\left( \mathbf{z}_{m}(\upi \times \usi, \gamma_S)\right),
     \mathrm{ES}^2_{\upi}(\uet) \otimes \mathrm{ES}^0_{\usi}(\underline{\omega})\right\rangle = \cL_{p,\gamma_S}(\upi \times \usi; \uet).
   \]
  \end{theorem}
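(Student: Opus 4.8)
The plan is to \emph{define} $\ES^2_{\upi}$ by transporting the geometric regulator formula \cref{prop:geomreg} across weight space, using Zariski-density of the good geometric locus. Fix a generator $\uet$ of the rank-one free $\cO(U)$-module $S^2(\upi)$ and a generator $\zeta$ of $\Dcris(\Gr^2 V(\upi))$ (replacing $V(\upi)^*$ by its reflexive hull if necessary, so that the latter is an invertible $\cO(U)$-module; this affects neither the fibres at good weights nor the statement, which is over $\cQ(U)$). For each good weight $P=(r_1,r_2)$ write $\ES^2_{\pi_P}(\eta_P)=c_P\,\zeta_P$ for a unique $c_P\in L^\times$, where $\eta_P,\zeta_P$ are the fibres of $\uet,\zeta$. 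The whole theorem then reduces to showing that $P\mapsto c_P$ is the restriction, to $\bigcup_{\usi}\fS(\upi,\usi)$, of a single element $c(\upi)\in\cQ(U)^\times$; one then sets $\ES^2_{\upi}(\uet)=c(\upi)\,\zeta$.

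Fix an auxiliary ordinary Hida family $\usi$ and tame data $\gamma_S$ with $\cL_{p,\gamma_S}(\upi\times\usi;\uet)\not\equiv 0$; such $\gamma_S$ exists by the argument in the density lemma proved above (evaluate at a good critical point whose completed $L$-value lies outside the critical strip and invoke the nonvanishing of $Z_S$, together with $\cE_p\neq 0$ in the ordinary case). Since the Perrin--Riou map is $\cO(U\times U')$-linear, $\cL^{\mathrm{PR}}(\mathbf z_1(\upi\times\usi,\gamma_S))$ is a genuine element of the rank-one $\cO(U\times U')$-module $\Dcris(\Gr^{(e/f)}\mathbb V^*)$, and $\ES^0_{\usi}(\underline\omega)$ is a genuine generator of $\Dcris(\Gr^0 V(\usi))$ over $\cO(U')$ by the theorem of Ohta and Kings--Loeffler--Zerbes recalled above. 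Hence
\[
 \mathcal A_{\usi,\gamma_S} := \bigl\langle\,\cL^{\mathrm{PR}}(\mathbf z_1(\upi\times\usi,\gamma_S))\,,\ \zeta\otimes\ES^0_{\usi}(\underline\omega)\,\bigr\rangle\ \in\ \cO(U\times U')
\]
is well-defined, and \cref{prop:geomreg} --- using that $\ES^0_{\usi}$ interpolates the $\ES^0_{\sigma_Q}$ and that $\cL^{\mathrm{PR}},\cL_{p,\gamma_S}$ specialise as recalled --- gives $c_P\cdot\mathcal A_{\usi,\gamma_S}(P,Q)=\cL_{p,\gamma_S}(\upi\times\usi;\uet)(P,Q)$ at every good geometric $(P,Q)$. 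In particular $\mathcal A_{\usi,\gamma_S}\not\equiv 0$, so $h_{\usi,\gamma_S}:=\cL_{p,\gamma_S}(\upi\times\usi;\uet)/\mathcal A_{\usi,\gamma_S}\in\cQ(U\times U')$ is well-defined, with $h_{\usi,\gamma_S}(P,Q)=c_P$ at every good geometric $(P,Q)$ where it is regular.

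The key point is that \emph{$h_{\usi,\gamma_S}$ is the pullback of an element $c(\upi)\in\cQ(U)$}. Fix a second weight $t_2^0$ large relative to the slope of $\usi$, generic enough that $h_{\usi,\gamma_S}$ restricts to an element of $\cQ(U)$ along $U\times\{t_2^0\}$. The good geometric weights with that fixed second weight --- those $P=(r_1,r_2)$, good for $\upi$, with $r_1-r_2\le t_2^0\le r_1$ --- form a Zariski-dense subset of $U$: one may take $r_1-r_2$ equal to its minimal admissible value and increase $r_1,r_2$ together by multiples of $(p-1)p^k$, obtaining integer weights accumulating $p$-adically at any prescribed such $P$. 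Thus $h_{\usi,\gamma_S}|_{U\times\{t_2^0\}}$ agrees with $P\mapsto c_P$ on a Zariski-dense subset of $U$; it is therefore independent of $t_2^0$ (any two such restrictions agree on a set with an accumulation point in connected $U$), equals $h_{\usi,\gamma_S}$ itself, and we call it $c(\upi)$. Running the same construction with another family $\usi'$ and data $\gamma_S'$ yields $c'(\upi)$ agreeing with $P\mapsto c_P$ on a Zariski-dense subset of $U$; as the two subsets have a common accumulation point (again with $r_1-r_2$ minimal), $c(\upi)=c'(\upi)$, so $c(\upi)$ depends only on $\upi$, and it is nonzero since each $c_P$ is. For $P\in\fS(\upi,\usi)$, applying this with the witnessing data shows the corresponding $\mathcal A$-quantity is nonzero at the relevant point, hence $c(\upi)$ is non-singular at $P$ with fibre $c_P$, i.e.~with fibre $\ES^2_{\pi_P}$. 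Setting $\ES^2_{\upi}(\uet)=c(\upi)\,\zeta$ thus gives a $\cQ(U)$-linear isomorphism with the asserted specialisation property; uniqueness is immediate since $\fS(\upi,\usi)$ is Zariski-dense. Finally, unwinding the definitions gives, in $\cQ(U\times U')$,
\[
 \cL_{p,\gamma_S}(\upi\times\usi;\uet) = c(\upi)\,\mathcal A_{\usi,\gamma_S} = \bigl\langle\,\cL^{\mathrm{PR}}(\mathbf z_1(\upi\times\usi,\gamma_S))\,,\ \ES^2_{\upi}(\uet)\otimes\ES^0_{\usi}(\underline\omega)\,\bigr\rangle,
\]
which is the reciprocity law for $m=1$; the case of general squarefree $m$ follows by the same computation together with the Euler-system norm relations from \cite{HJS20} and the compatibility of $\cL^{\mathrm{PR}}$ with corestriction.

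I expect the main obstacle to be precisely the descent step --- verifying that the meromorphic function $h_{\usi,\gamma_S}$ on $U\times U'$ descends to $U$ --- which rests on the not-quite-obvious point that the good geometric weights with a fixed second weight remain Zariski-dense in $U$, ultimately because the geometric branching window $r_1-r_2\le t_2\le r_1$ constrains $r_1$ and $r_1-r_2$ but not the pair $(r_1,r_2)$ itself. The rest is bookkeeping: checking that $S^2(\upi)$, $\Dcris(\Gr^2 V(\upi))$ and $\Dcris(\Gr^{(e/f)}\mathbb V^*)$ are (generically) rank one and correctly paired, so that the quotients above lie in $\cQ(U)$ and specialise correctly, and tracking the various density statements as $\usi$, $\gamma_S$ and $Q$ vary. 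It should be stressed that the output is genuinely only a $\cQ(U)$-module isomorphism: controlling the poles of $c(\upi)$ would require extra inputs (integrality of the regulator, or a nearly-overconvergent refinement) not available with the present methods.
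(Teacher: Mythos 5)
Your proposal follows essentially the same strategy as the paper: define the ``Eichler--Shimura ratio'' via the regulator formula at good geometric points, show that this meromorphic function on $U\times U'$ descends to $\cQ(U)$ by a density-of-good-geometric-weights argument, check independence of the auxiliary family and tame data, and define $\ES^2_{\upi}$ by rescaling a fixed trivialisation of the two rank-one modules. The one point where you deviate slightly is the descent-to-$\cQ(U)$ step: the paper considers the difference $\mathsf R(\br_1,\br_2,\bt_2)-\mathsf R(\br_1,\br_2,\hat{\bt}_2)$ on $U\times U'\times U'$ and shows it vanishes on a Zariski-dense set (following \cite[Prop.~17.7.3]{LZ20}), whereas you slice at a fixed generic second weight $t_2^0$ and argue Zariski-density of good geometric weights in $U$ with $t_2^0$ frozen. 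These are equivalent, though the paper's double-variable formulation is a touch cleaner because it sidesteps the need to verify (i) that the frozen $t_2^0$ can be chosen off the polar divisor of $h$, and (ii) that the integer points $(r_1,r_2)$ with $r_1-r_2\le t_2^0\le r_1$ are actually Zariski-dense in a $2$-dimensional disc (which requires $t_2^0$ to be large compared to the radius of $U$, so that the window $[3,t_2^0]$ contains more than one admissible residue of $r_1-r_2$; your phrase ``take $r_1-r_2$ equal to its minimal admissible value and increase $r_1,r_2$ together'' only sweeps out a single line, and you should instead observe that the entire cone is swept out as $r_1-r_2$ ranges over all admissible values in the window). You are also more careful than the paper's written proof about extending the reciprocity law to general squarefree $m$ via norm compatibility; the paper's displayed $\mathbf z_m$ should strictly be $\mathbf z_1$ in its argument.
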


  \begin{proof}
   We start by choosing a ``random'' isomorphism $\jmath$ between $S^2(\upi)$ and $\Dcris(\Gr^2 V(\upi))$, which is possible since both are free rank 1 $\cO(U)$-modules.

   As in the proof of the preceding lemma, we choose local data $\gamma_S$ such that $\cL_{p, \gamma_S}(\upi, \usi; \uet)$ is not identically zero, and consider the ratio
   \[ \mathsf{R} = \frac{1}{\cL_{p, \gamma_S}(\upi \times \usi, \uet)}  \left\langle\cL\left( \mathbf{z}_{m}(\upi \times \usi, \gamma_S)\right),
        \jmath(\uet) \otimes \mathrm{ES}^0_{\usi}(\underline{\omega})\right\rangle \in \cQ(U \times U').
   \]

   If we now take a $(P, Q)$ that is good geometric, and such that $\cL_{p, \gamma_S}(\upi, \usi; \uet)$ does not vanish at $(P, Q)$, it follows from the \cref{prop:geomreg} that $\mathsf{R}$ is regular at $(P, Q)$ and its value there is equal to the ratio $\jmath_P / \ES^2_{\pi_P}$ (independent of $Q$).

   We claim that $\mathsf{R} \in \cQ(U)$; that is, as a meromorphic function on $U \times U'$, it is independent of the $U'$ variable. To justify this, we argue as in Proposition 17.7.3 of \cite{LZ20}: we consider the meromorphic function $\mathsf{R}(\br_1, \br_2, \bt_2) - \mathsf{R}(\br_1, \br_2, \hat{\bt}_2)$ on $U \times U' \times U'$, where $\hat{\bt}_2$ is the coordinate on a second copy of $U'$. Because of \cref{prop:geomreg}, this function has to vanish at all points $(r_1, r_2, t_2, \hat{t}_2)$ such that $(r_1,r_2, t_2)$ and $(r_1, r_2, \hat{t}_2)$ are both good geometric and neither is in the vanishing locus of $\cL_{p, \gamma_S}(\upi \times \usi, \uet)$; this set is easily seen to be Zariski-dense in $U \times U' \times U'$. The same argument also shows that $\mathsf{R}$ doesn't depend on $\gamma_S$.

   Thus $\mathsf{R}$ is an element of $\cQ(U)^\times$, regular at all points $P \in \fS(\upi, \usi)$ and coinciding at each such point with the ratio $j_P / \ES^2_{\pi_P}$. So if we define $\ES^2_{\upi} = \mathsf{R}^{-1} \jmath$, then $\ES^2_{\upi}$ is regular at all points in $\fS(\upi, \usi)$ and coincides at such points with $\ES^2_{\pi_P}$. By the preceding lemma, this interpolating property uniquely determines $\ES^2_{\upi}$, and is independent of $\usi$; and the reciprocity law holds by construction.
  \end{proof}

  \begin{remark}
   Note that there could, \emph{a priori}, be points where $\ES^2_{\upi}$ is 0 or $\infty$; or where it is a well-defined isomorphism but this isomorphism does not coincide with $\ES_{\pi_P}^2$.
  \end{remark}

 \subsection{Application to the Bloch--Kato conjecture}

  Let us now consider the following situation:
  \begin{itemize}
   \item $\pi$ and $\sigma$ are cohomological cuspidal automorphic representations of $\GSp_4 \times \GL_2$, with $p$-stabilisations which are ordinary and $p$-regular, which are ``deformable'' in the above sense.

   \item If $t_2 = r_1 - r_2 - 1$ (so that $t_1 = -1$), then we suppose that $L(\Pi \times \Sigma, \tfrac{1}{2}) \ne 0$. (In all other cases the non-vanishing of $L(\Pi \times \Sigma, 1+\tfrac{t_1}{2})$ is automatic.)

   \item The Galois representation $V = V_p(\pi)^* \otimes V_p(\sigma)^*(-1-r_1)$ satisfies the ``big image'' conditions of \cite[\S 3.5]{mazurrubin04}.

   \item None of the eight characters appearing as graded pieces of $V$ as a $\Gal(\overline{\QQ}_p / \Qp)$-representation are congruent mod $p$ to the trivial character, or to the $p$-adic cyclotomic character (``$p$-distinction'').
  \end{itemize}

  (Note that the ``big image'' hypothesis can only be satisfied if $\chi_0\chi_2 \ne 1 \bmod p$, but is frequently satisfied when this condition does hold; compare the discussion in \S 11.1 of \cite{KLZ17} in the Rankin--Selberg case.)

  \begin{theorem}
  \label{thm:BKconj}
   In the above setting, we have
   \[ H^1_{\mathrm{f}}(\QQ, V(\pi)^* \otimes V(\sigma)^*(-1-r_1)) = 0,\]
   as predicted by the Bloch--Kato conjecture.
  \end{theorem}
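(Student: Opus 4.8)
The plan is to run the standard Euler system argument, along the lines of \cite{LZ20} for the degree~$4$ motive. Fix Hida families $\upi$ and $\usi$ deforming $\pi$ and $\sigma$ (these exist by deformability, using the standing simplification $\tilde U = U$, $\tilde U' = U'$); write $P = (r_1, r_2) \in U$ and $Q = (t_2) \in U'$ for the points cutting out $\pi$ and $\sigma$, and set $V = \mathbb{V}^*_{P, Q} = V(\pi)^* \otimes V(\sigma)^*(-1 - r_1)$. In the situation of the theorem $(P, Q)$ is good critical, with $t_1 = r_1 - r_2 - 2 - t_2 \ge -1$, and $\Lambda(\Pi \times \Sigma, 1 + \tfrac{t_1}{2}) \ne 0$: if $t_1 \ge 0$ this is automatic because $1 + \tfrac{t_1}{2}$ lies on or beyond the edge of the critical strip, and if $t_1 = -1$ it is the central-value hypothesis. \textbf{Step 1.} Choose tame test data $\gamma_S$ with $Z_S(\pi \times \sigma, \gamma_S) \ne 0$ (possible by the proposition on nonvanishing of $Z_S$). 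Since $\pi$ is ordinary, $\cE_p(\pi_P \times \sigma_Q) \ne 0$; the periods $\Omega_p(\pi_P, \eta_P)$, $\Omega_\infty(\pi_P, \eta_P)$ and the Gauss sums are nonzero by construction. Hence the interpolation property of $\cL_{p, \gamma_S}$ at good critical points gives $\cL_{p, \gamma_S}(\upi \times \usi; \uet)(P, Q) \ne 0$.

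\textbf{Step 2: from the $p$-adic $L$-value to the regulator of the bottom class.} The explicit reciprocity law gives the identity $\langle \cL^{\mathrm{PR}}(\mathbf{z}_1(\upi \times \usi, \gamma_S)),\, \ES^2_{\upi}(\uet) \otimes \ES^0_{\usi}(\underline\omega) \rangle = \cL_{p, \gamma_S}(\upi \times \usi; \uet)$ of meromorphic functions on $U \times U'$. Now $\ES^0_{\usi}$ is an honest $\cO(U')$-module isomorphism and $\underline\omega$ a generator, so $\ES^0_{\usi}(\underline\omega)$ is nowhere vanishing, while $\uet$ generates the free rank-one module $S^2(\upi)$; thus the only obstruction to specialising this identity at $(P, Q)$ is a possible singularity of $\ES^2_{\upi}$ there. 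To exclude it, recall that $\ES^2_{\upi}$ depends only on $\upi$ and is non-singular on $\fS(\upi, \usi'')$ for \emph{any} auxiliary $\GL_2$ Hida family $\usi''$; choosing $\usi''$ to deform an ordinary newform whose weight lies in the good-geometric range $[r_1 - r_2, r_1]$ for $P$, together with tame data making the corresponding $p$-adic $L$-value nonzero at that geometric point, places $P$ in $\fS(\upi, \usi'')$, exactly as in the proof of the density lemma. Granting this, specialising at $(P, Q)$ and using Step~1 forces $\cL^{\mathrm{PR}}(\mathbf{z}_1(\upi \times \usi, \gamma_S))(P, Q) \ne 0$. Since $(P, Q)$ is good critical, $\cL^{\mathrm{PR}}$ specialises there to the Bloch--Kato dual exponential on $\Gr^{(e/f)} \mathbb{V}^*_{P, Q}$ up to the nonzero factor $\cE_p(\pi_P \times \sigma_Q)$; hence the bottom Euler system class $z_1 := \mathbf{z}_1(\pi_P \times \sigma_Q, \gamma_S) \in H^1(\QQ, V)$ has $\exp^*(\mathrm{loc}_p z_1) \ne 0$, so $z_1 \ne 0$ and $\mathrm{loc}_p z_1 \notin H^1_{\mathrm{f}}(\Qp, V)$.

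\textbf{Step 3: the Euler system machine.} The classes $\{\mathbf{z}_m(\pi_P \times \sigma_Q, \gamma_S)\}_m$, obtained by specialising the Euler system of \cite{HJS20} (the norm relations survive specialisation; the hypothesis $\chi_0 \chi_2 \ne 1$ disposes of the smoothing factor), form an Euler system for $V$. Under the ``big image'' hypothesis of \cite[\S 3.5]{mazurrubin04} and the $p$-distinction hypothesis, the Euler system / Kolyvagin system machine, in the form of \cite{mazurrubin04}, applies: as $\mathrm{loc}_p z_1$ has nontrivial image in the singular quotient $H^1(\Qp, V) / H^1_{\mathrm{f}}(\Qp, V)$, it forces the dual Bloch--Kato Selmer group $H^1_{\mathrm{f}}(\QQ, V^*(1))$ to vanish; since $0$ is critical for $V$ the archimedean term in the global Euler-characteristic formula vanishes, whence $H^1_{\mathrm{f}}(\QQ, V) = 0$, as claimed. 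Here $p$-distinction ensures the Euler system bound loses no power of $p$, and big image supplies the Chebotarev input; both are used just as in \cite{LZ20}.

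\textbf{Main obstacle.} The delicate point is Step~2: the family Eichler--Shimura morphism $\ES^2_{\upi}$ is only \emph{meromorphic}, and the reciprocity law has been proved directly only at good geometric weights, so transporting it to the critical specialisation of interest rests on the density arguments (and auxiliary $\GL_2$ families) controlling where $\ES^2_{\upi}$ is singular. Steps~1 and~3 are then formal: the first from the interpolation property of $\cL_p$, the second from the standard Euler system formalism.
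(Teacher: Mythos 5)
Your Step 2 contains a genuine gap that is precisely the issue the paper goes to some lengths to avoid claiming. You assert that the possible singularity of $\ES^2_{\upi}$ at the point $P$ of interest can always be excluded by picking an auxiliary $\GL_2$ Hida family $\usi''$ with weight in the good-geometric range $[r_1 - r_2, r_1]$ for $P$, together with tame data making the $p$-adic $L$-value nonzero there, ``exactly as in the proof of the density lemma.'' But this does not work: for a \emph{good geometric} point, $\cL_{p,\gamma_S}(\upi\times\usi'')$ is (by the paper's \cref{prop:geomreg}) precisely the Perrin--Riou regulator of the bottom Euler system class at that specialisation, and its non-vanishing is \emph{not} a consequence of any interpolation formula involving classical $L$-values. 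Asserting that we can choose $\gamma_S$ to make it non-zero is therefore circular --- it is exactly the sort of non-vanishing we are trying to establish. The density lemma does not help here either: it produces \emph{some} Zariski-dense set of points in $\fS(\upi,\usi'')$ by perturbing away from critical points, but it gives no control over whether the particular $P$ you care about belongs to that set. The paper is explicit about this: it says of the degeneracy of $\ES^2_{\upi}$ at $(r_1,r_2)$ that ``We expect that this never occurs, but we cannot yet rule it out,'' and then handles the exceptional case by the \emph{leading-term argument} (replacing the whole Euler system $\{\mathbf{z}_m\}$ by a derivative in weight space and rescaling $\ES$, after noting that degeneracy forces all $\mathbf{z}_m$ to satisfy the stronger local condition $\cF^{(f)}$ and hence to vanish). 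Your proposal omits this step entirely.

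A secondary inaccuracy: in Step 3 you say the $p$-distinction hypothesis is there so that ``the Euler system bound loses no power of $p$.'' Its actual role in the paper is different and tied to the leading-term argument: the derived Euler system produced in the exceptional case no longer comes with a $p$-cyclotomic tower (since the reciprocity law does not see the cyclotomic variable), so the argument of \cite[\S 12]{KLZ17} that the $\cF^{(e)}$-local condition passes to Kolyvagin systems is unavailable, and one must instead use the appendix of \cite{leiloefflerzerbes14b}, which is exactly where $p$-distinction is needed. In the non-exceptional case you should also cite \cite[\S 12]{KLZ17} rather than vanilla Mazur--Rubin, since the local condition on the Euler system is the non-standard $\cF^{(e)}$ one. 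Step 1 is correct, and the overall shape of the argument matches the paper, but the missing leading-term argument is a real hole.
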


  \begin{proof}
   If $(r_1, r_2)$ is in the set $\fS(\upi, \usi)$ defined above (or more generally in $\fS(\upi, \usi')$ for some possibly different Hida family $\usi'$), then the theorem of the previous section implies that we have an Euler system for $V(\pi)^* \otimes V(\sigma)^*(-1-r_1)$ whose bottom class is non-zero. Hence we may apply the machinery of ``Euler systems with local conditions'' developed in \cite[\S 12]{KLZ17} to deduce the finiteness of the Selmer group.

   The exceptional case which we need to deal with is when the ``family'' Eichler--Shimura isomorphism degenerates at $(r_1, r_2)$. We expect that this never occurs, but we cannot yet rule it out. In this situation, we use a version of the ``leading term argument'' from \cite{LZ20, LZ20-yoshida}). The construction of the $p$-adic $L$-function (and the proof of the reciprocity law) extend immediately to equivariant $p$-adic $L$-functions over $\QQ(\zeta_m)$, for all $m$ coprime to $T$. If the Eichler--Shimura isomorphism degenerates at $(r_1, r_2)$, then not only the class $\mathbf{z}_1(\pi \times \sigma)$, but all the classes $\mathbf{z}_m$, must satisfy the stronger local condition defined by $\cF^{(f)}$; and this forces all the classes to be zero, as in \ and this forces all of the classes to be zero. So we may replace the whole Euler system by its first derivative (in some arbitrarily chosen direction in weight space) and rescale the Eichler--Shimura isomorphism accordingly. Proceeding inductively, we eventually obtain an Euler system with non-trivial bottom class, and the argument proceeds as before.

   (A slight complication here is that in the exceptional case, the Euler system we obtain for $V$ does not necessarily extend to classes over the $p$-cyclotomic tower satisfying the extra-strong local condition $\cF^{(f)}$, since our explicit reciprocity law does not ``see'' the cyclotomic variable. Hence we cannot use the arguments of \cite[\S 12]{KLZ17} to prove the crucial lemma that this local condition is preserved by the passage from Euler to Kolyvagin systems, as these arguments rely on the presence of the $p$-cyclotomic tower. This is the reason for imposing the rather stringent $p$-distinction hypothesis, which allows us to use the alternative, slightly more direct approach given in the appendix of \cite{leiloefflerzerbes14b}, in which the cyclotomic extension is not needed.)
  \end{proof}


\newlength{\bibitemsep}
\setlength{\bibitemsep}{0.75ex plus 0.05ex minus 0.05ex}
\newlength{\bibparskip}
\setlength{\bibparskip}{0pt}
\let\oldthebibliography\thebibliography
\renewcommand\thebibliography[1]{%
 \oldthebibliography{#1}%
 \setlength{\parskip}{\bibparskip}%
 \setlength{\itemsep}{\bibitemsep}%
}

\newcommand{\noopsort}[1]{}

\providecommand{\bysame}{\leavevmode\hbox to3em{\hrulefill}\thinspace}
\providecommand{\MR}[1]{}
\renewcommand{\MR}[1]{%
    MR \href{http://www.ams.org/mathscinet-getitem?mr=#1}{#1}.
}
\providecommand{\href}[2]{#2}
\newcommand{\articlehref}[2]{\href{#1}{#2}}

\end{document}